\documentclass[notitlepage,11pt,reqno]{amsart}

\usepackage{amsopn,esint,nicefrac}
\usepackage{mathabx}
\usepackage[final]{hyperref}
\usepackage[T1]{fontenc}
\usepackage{graphicx}
\usepackage{cancel,pdfsync}
\usepackage{bbm}
\usepackage{tikz}
\usetikzlibrary{patterns} 
\usepackage{float}

\usepackage[utf8]{inputenc}
\usepackage{apptools}
\usepackage[margin=1in]{geometry}
\allowdisplaybreaks
\newcommand{\stkout}[1]{\ifmmode\text{\sout{\ensuremath{#1}}}\else\sout{#1}\fi}

\renewcommand{\fint}{\mathop{\int\hskip -0,93em -\, \!\!\!}}
\newcommand{\smallfint}{\mathop{\int\hskip -0,78em -\, \!\!}}

\newcommand{\R}{\mathbb{R}}

\newcommand{\Rn}{\mathbb{R}^n}
\usepackage{graphicx,enumitem,dsfont,upgreek}
 \usepackage[dvips]{epsfig}
 \usepackage[mathscr]{eucal}
\usepackage{amscd}
\usepackage{amssymb,nicefrac}
\usepackage{amsthm}
\usepackage{amsmath}
\usepackage{latexsym}
\usepackage{dsfont}
\usepackage{upref}
\usepackage{hyperref}

\usepackage{color}
\theoremstyle{plain}

\newtheorem{thm}{Theorem}[section]
\theoremstyle{plain}
\newtheorem{lem}[thm]{Lemma}

\newtheorem{cor}[thm]{Corollary}

\newtheorem{defi}[thm]{Definition}
\newtheorem{rem}{Remark}[section]
\theoremstyle{definition}
\newtheorem*{maintheorem*}{Main Theorem}
\newtheorem*{maincorollary*}{Main Corollary}
{%
\setcounter{enumi}{0}

\begin{enumerate}}%
{\end{enumerate} }

{%
\setcounter{enumi}{0}

\begin{enumerate}}%
{\end{enumerate} }

\newcommand{\norm}[1]{\ensuremath{\left\|#1\right\|}}

\newcommand{\cone}{\ensuremath{\mathcal{C}}}

\newcommand{\cK}{\ensuremath{\mathscr{K}}}

\newcommand{\cL}{\ensuremath{\mathcal{L}}}
\newcommand{\cM}{\ensuremath{\mathcal{M}}}

\newcommand{\dist}{{\rm dist}}

\newcommand{\tail}{\texttt{Tail}}
\newcommand{\grad}{\nabla}
\newcommand{\bu}{\bar{u}}
\newcommand{\bX}{\ensuremath{\mathbb{X}_0}}

\newcommand{\dx}{\ensuremath{\, {\rm d}x}}
\newcommand{\dy}{\ensuremath{\, {\rm d}y}}
\newcommand{\dz}{\ensuremath{\, {\rm d}z}}

\newcommand{\dr}{\ensuremath{\, {\rm d}r}}
\newcommand{\dt}{\ensuremath{\, {\rm d}t}}
\newcommand{\dnu}{\ensuremath{\, {\rm d}\nu}}

\newcommand{\supp}{\ensuremath{\mathrm{supp}\,}}

\numberwithin{equation}{section} \allowdisplaybreaks

\DeclareMathOperator*{\Argmin}{arg\,min}

\title[Strong comparison principle and symmetry]{Strong comparison principle and symmetry results for the fractional $p$-Laplacian}

\begin{document}

\author{Anup Biswas}

\author{Subhajit Roy}

\author{Aniket Sen}

\address{Indian Institute of Science Education and Research-Pune, Dr.\ Homi Bhabha Road, Pashan, Pune 411008, INDIA.}
\email{ anup@iiserpune.ac.in, subhajit.roy@iiserpune.ac.in, aniket.sen@students.iiserpune.ac.in}

\begin{abstract}
In this article, we study the equation
$$
(-\Delta_p)^s u = f(u)
$$
in a bounded domain $\Omega\subset \Rn$, where $n\geq 2$, $p>2$, and $f$ is locally Lipschitz. We establish a strong comparison principle in a fairly general setting and use it to derive symmetry results for positive $C^1$ solutions satisfying Dirichlet boundary conditions. We also show that the $C^1$ regularity assumption is indeed satisfied for
$p\in \left[2,\frac{2}{1-s}\right)$. 
\end{abstract}

\keywords{H\"{o}lder regularity of gradient, maximum principle, monotonicity, radial solution, antisymmetric functions}
\subjclass[2020]{ 35B50, 35B06, 35R11}

\maketitle


\section{Introduction}
The main goal of this article is to study the strong maximum principle and symmetry properties of solutions to
\begin{equation}\label{E1.1}
(-\Delta_p)^s u  = f(u)\quad \text{in}\; \Omega.
\end{equation}
Here, $\Omega$ is an open set in $\Rn$ with $n\geq 2$, and $f:\R\to\R$ is a locally Lipschitz function. We investigate these symmetry properties for positive solutions satisfying the Dirichlet boundary condition in $\Omega^c$. The fractional $p$-Laplacian operator $(-\Delta_p)^s$ is defined as
$$(-\Delta_p)^s u(x)=\mathrm{pv}\int_{\Rn} J_p(u(x)-u(y)) \frac{\dy}{|x-y|^{n+sp}}\, ,$$
where $s\in (0, 1)$, $p\in (1, \infty)$, and $J_p(t)=|t|^{p-2}t$. 

Throughout this paper, we consider both weak and local weak solutions. To formalize these concepts, we introduce the function space
$$\bX(\Omega)=\{v\in W^{s, p}(\Rn)\; :\; v=0\quad \text{in}\; \Omega^c\}.$$
We denote by $W^{s, p}_0(\Omega)$ the completion of $C^\infty_c(\Omega)$ with respect to the norm $\norm{\cdot}_{W^{s, p}(\Omega)}$. A function $u\in W^{s,p}(\Omega_1)\cap L^{p-1}_{sp}(\Rn)$, where $\Omega_1\Supset\Omega$, is called a \textit{weak solution} to \eqref{E1.1} if 
$$
\frac{1}{2}\int_{\Rn}\int_{\Rn} J_p(u(x)-u(y))(\phi(x)-\phi(y))\frac{\dx\dy}{|x-y|^{n+sp}}=\int_{\Rn} f(u(x))\phi(x)\dx
$$
holds for all $\phi\in \bX(\Omega)$. Here, $L^{q}_\alpha(\Rn)$ denotes the tail space defined by
\begin{align*}
L^{q}_{\alpha}(\Rn)  &= \left\{v\in L^{q}_{\rm loc}(\Rn)\;: \; \int_{\Rn} \frac{|v(z)|^{q}}{1+|z|^{n+\alpha}}\dz<\infty\right\},
\\
\tail_{\alpha, q}(x, r; u)&=\left(\int_{B_r^c(x)} \frac{|u(z)|^{q}}{|z-x|^{n+\alpha}}\dz\right)^{\frac{1}{q}}.
\end{align*}
Such weak solutions are typically obtained by minimizing the energy functional 
$$\mathcal{E}(u) = \frac{1}{2p}\int_{\Rn}\int_{\Rn} |u(x)-u(y)|^p\frac{\dx\dy}{|x-y|^{n+sp}} -\int_{\Omega} F(u(x))\dx$$
over the class of functions in $W^{s,p}(\Rn)$ with prescribed values in $\Omega^c$, where $F(t)=\int_0^t f(\tau) d\tau$. 

To define local weak solutions to \eqref{E1.1}, we require the local Sobolev space
\begin{align*}
W^{s, p}_{\rm loc}(\Omega)&=\{v\in L^{p}_{\rm loc}(\Omega)\; :\; v\in W^{s,p}(\Omega^\prime)\; \text{for all}\; \Omega^\prime\Subset\Omega\}.
\end{align*}
A function $u\in W^{s, p}_{\rm loc}(\Omega)\cap L^{p-1}_{sp}(\Rn)$ is said to be a \textit{local weak solution} to \eqref{E1.1} if 
$$
\frac{1}{2}\int_{\Rn}\int_{\Rn} J_p(u(x)-u(y))(\phi(x)-\phi(y))\frac{\dx\dy}{|x-y|^{n+sp}}=\int_{\Rn} f(u(x))\phi(x)\dx
$$
holds for all $\phi\in C^\infty_c(\Omega)$. It is clear that every weak solution is also a local weak solution. Furthermore, for local weak solutions, the class of test functions can be extended to $\bX(B)$ for any $B\Subset\Omega$ by virtue of \cite[Theorem 6.62]{Leoni}.

The bulk of our analysis is sustained by local weak solutions. However, the moving plane machinery in Section~\ref{S-moving} explicitly requires weak solutions to leverage Hopf's lemma, which depends fundamentally on the validity of the weak comparison principle.

As mentioned previously, one of the primary goals of this article is to investigate the strong comparison principle (SCP) associated with equation \eqref{E1.1}. That is, if $u$ is a subsolution to \eqref{E1.1} and $v$ is a supersolution to \eqref{E1.1} satisfying $v \geq u$ in $\mathbb{R}^n$, does 
$$
\{u=v\} \cap \Omega \neq \emptyset \implies u \equiv v \text{ in } \mathbb{R}^n?
$$

We begin by reviewing the SCP results known for the classical $p$-Laplace operator. In \cite[Theorem~1.4]{D98}, Damascelli proved that given $u, v \in C^1(\Omega)$ with $v \geq u$ in $\Omega$, the SCP holds in any connected component of $\Omega \setminus \mathcal{Z}_{u,v}$, where 
$
\mathcal{Z}_{u,v} = \Omega \cap \{\nabla u = 0=\nabla v\}.
$
Assuming $f$ is nondecreasing and $u < v$ on $\partial\Omega$, the SCP was established by Guedda and V\'{e}ron in \cite{GV89}. Further improvements in this direction were made by Lucia and Prashanth \cite{LP04}, as well as Roselli and Sciunzi \cite{RS07}. We also mention the work of Cuesta and Tak\'{a}\v{c} \cite{CT98}, where the SCP was investigated for nonlinearities that do not change sign.

On the other hand, the theory of the SCP in the context of the fractional $p$-Laplacian is rather limited. For $p=2$, one can use linearity to derive the SCP; see, for instance, Caffarelli and Silvestre \cite{CS09}. To the best of our knowledge, for general values of $p$, the only available results on the strong comparison principle are due to Jarohs \cite{SJ18} and Colasuonno et al. \cite{CFGQ}, where the nonlinearities considered are of homogeneous type. Moreover, the SCP established in \cite{SJ18} requires the condition
$p-1>sp$,
while the result in \cite{CFGQ} assumes \emph{a priori} an additional integral condition (see Theorem 2.1 therein), which appears difficult to verify unless
$p-1>sp$.
Let us also mention that for $p \geq 2$, if we assume $u, v \in C^{1,1}(\Omega)$, then $u$ and $v$ become classical sub- and supersolutions, making the establishment of the SCP rather trivial; see, for instance, \cite{CL18}. As one might expect, the regularity of the sub- and supersolutions becomes quite important in the investigation of the SCP. It is also not practical to expect solutions to be in $C^{1,1}$ given recent developments in the regularity theory of fractional $p$-Laplacian operators. In fact, assuming $f$ is Lipschitz, we may expect the solution to be in $C^{0, \gamma_0}_{\rm loc}(\Omega)$ (loosely speaking), where 
\[
\gamma_0 = \begin{cases}
\min\left\{1, \frac{sp}{p-2}\right\} & \text{if } p > 2, 
\\[2mm]
1 & \text{if } p \in (1,2].
\end{cases}
\]
See, for instance, \cite{BDLMS24a, BDLMS24b, BLS18, BS25, BT25, GL24} for interior regularity, \cite{IMS16,IMS20} for boundary regularity, and \cite{KMS15a,KMS15b} for fine zero-order regularity estimates. The recent development by Giovagnoli, Jesus, and Silvestre \cite{GJS25} suggests that the solutions are in $C^{1,\alpha}_{\rm loc}$ for $p \in \left[2, \frac{2}{1-s}\right)$ (though their proof considers $f=0$, it can be easily extended to locally Lipschitz data; see Theorem~\ref{T5.3} below). One of the main contributions of this article is to establish the SCP keeping this regularity in mind. In particular, we prove the SCP for $p>2$ in the following two situations:
\begin{itemize}
\item $\frac{sp}{p-1} < 1$, where both $u$ and $v$ are locally $\left(\frac{sp}{p-1}+\epsilon\right)$-H\"{o}lder continuous in $\Omega$ for some $\epsilon>0$.
\item $\frac{sp}{p-2} > 1$, where both $u$ and $v$ are locally Lipschitz continuous in $\Omega$ with $u \in C^1(\Omega)$. Moreover, $\{\nabla u=0\} \cap \Omega \Subset \Omega$.
\end{itemize}
See Theorem~\ref{T2.8} below for a detailed statement. We allow the source term to depend on $x$, and $f$ need not be monotone. Also, note that the $C^1$ assumption is not required when $p-2 \leq sp < p-1$. As far as the proof is concerned, we borrow inspiration from \cite[Theorem~1.4]{D98}. However, due to the nonlocal nature of the problem, the present situation turns out to be much more complicated. We draw our inspiration from
the estimates of \cite{DKP14, DKP16} which are essential for the nonlocal Harnack inequality in those works. Part of the reason we restrict ourselves to the case $p>2$ is the nonlocal tail; see, for instance, the proofs of Lemmas~\ref{L2.6} and~\ref{L2.7}.

The second contribution of this article is the study of the symmetry of positive solutions to \eqref{E1.1}. Let us first briefly review the established developments for the classical $p$-Laplacian operator. 
For $p=2$, the symmetry result was proved in the seminal work of Gidas, Ni, and Nirenberg \cite{GNN}, who exploited the Alexandrov--Serrin moving plane method \cite{Serrin71}. One  of the important outcomes of this moving plane method is the strictly monotonicity property of the solution.
For $p \neq 2$, however, the operator becomes singular or degenerate, making the availability of an appropriate SCP highly challenging. Assuming $\Omega$ is a ball, the symmetry of the solution was first obtained by Badiale and Nabana \cite{BN94} under the hypothesis that the gradient of the solution vanishes only at the center of the ball. Later, for $1 < p < 2$, symmetry results were established in the works of Damascelli and Pacella \cite{DP98, DP00} without imposing any assumptions on the critical set. For $p > 2$, a similar symmetry property was established by Damascelli and Sciunzi \cite{DS04, DS06} under the hypothesis that $f$ is positive in $(0, \infty)$. It is necessary to point out that there exist sign-changing Lipschitz functions $f$ in a ball for which one can obtain non-radial positive solutions (see \cite{Br00,KS90}). Moreover, for $p > 2$, there are positive radial solutions that are not strictly decreasing along the radius \cite[Example~5.1]{GKPR}.

On the other hand, analogous results for nonlocal operators are relatively scarce. For the fractional Laplacian, symmetry and strict monotonicity via the moving plane method were established by Felmer and Wang \cite{FW14}; see also \cite{BMS18,BM25}. In the case $p\neq 2$, there are some existing works, most of which are based on the {\it direct method} of moving plane introduced by Chen and Li \cite{CL18}. A common assumption in these works is that
$$
u\in C^{1,1}(\Omega),
$$
which ensures that the operator is classically well defined for $p\geq 2$. However, even in comparison with the known optimal regularity theory for the local $p$-Laplace equation \cite{IM89}, such a regularity assumption appears rather restrictive.

Motivated by the recent regularity developments in \cite{GJS25}, we work instead with $C^1$ solutions, and this assumption is justified by Theorem~\ref{T5.3} below. Moreover, we allow for a very general nonlinearity $f$ satisfying only
\[
f(0)\geq 0.
\]
This condition is significantly weaker than the commonly imposed assumption in the local case that $f$ is positive on $(0,\infty)$. Part of the reason this weaker hypothesis suffices is that, due to the nonlocal nature of the operator, a local strong comparison principle implies a global one.

Moreover, a closer examination of the proofs in \cite{CL18} reveals that the key boundary point estimate established in \cite[Theorem~2.3]{CL18} may not be sufficient to complete the argument of \cite[Theorem~3.1]{CL18}; see Remark~\ref{R5.1} below for further discussion. In the present work, we assume that $\Omega$ is strictly convex and, with the aid of the Hopf lemma, we are able to complete the proof. Another crucial ingredient in our arguments is a strong comparison principle for reflected functions, which is developed in Section~\ref{S-anti}.

The remainder of the article is organized as follows. In the next section, we collect several preliminary results concerning viscosity solutions that will play a crucial role in our arguments. In Section~\ref{S-SCP}, we establish the strong comparison principle, while Section~\ref{S-anti} is devoted to the proof of the strong comparison principle for  functions reflected with respect to a hyperplane. Section~\ref{S-moving} contains the proof of our symmetry results. Finally, in the Appendix, we provide a sketch of the $C^{1,\alpha}$ regularity result in the presence of Lipschitz continuous data.

\section{Preliminary results for viscosity solutions}\label{S-prelim}
The equivalence between local weak solutions and viscosity solutions is by now well known; see \cite{KKL19,BM21}. We shall exploit this equivalence in our application of the moving plane method. In fact, we only use the easier direction of the equivalence, namely that every continuous local weak solution is also a viscosity solution. One reason for doing so is that it allows us to adapt certain ideas from \cite{CL18}, which were developed for classical $C^{1,1}$ solutions.

To introduce the definition of viscosity solutions, we first recall some notation from \cite{KKL19}. Since it was shown in \cite{KKL19} that $(-\Delta_p)^s$ may fail to be classically well-defined even for certain $C^2$ functions, one must restrict attention to an appropriate subclass of test functions in the definition of viscosity solutions.
Given an open set $D$, we denote by $C^2_\eta(D)$, a subset of $C^2(D)$, defined as
$$
C^2_\eta(D)=\left\{\phi\in C^2(D)\; :\; \sup_{x\in D}\left[\frac{\min\{d_\phi(x), 1\}^{\eta-1}}{|\nabla\phi(x)|} +
\frac{|D^2\phi(x)|}{(d_\phi(x))^{\eta-2}}\right]<\infty\right\},
$$
where
$$ 
d_\phi(x)=\dist(x, N_\phi)\quad \text{and}\quad N_\phi=\{x\in D\; :\; \nabla\phi(x)=0\}.$$

The above restricted class of test functions becomes necessary to define $(-\Delta_p)^s$ in the classical sense in the singular case, that is, for
$p\leq \frac{2}{2-s}$. 
Now we are ready to define the viscosity solution from \cite[Definition~3]{KKL19}. 

\begin{defi}\label{Defi-vis}
A function $u:\Rn\to \R$ is a viscosity subsolution (supersolution) to $(-\Delta_p)^s u= f$ in $\Omega$ if it satisfies the following
\begin{itemize}
\item[(i)] $u$ is upper (lower) semicontinuous in $\bar\Omega$.
\item[(ii)] If $\varphi\in C^2(B_r(x_0))$ for some $B_r(x_0)\subset \Omega$ satisfies $\varphi(x_0)=u(x_0)$,
$\varphi\geq u$ ($\varphi\leq u$) in $B_r(x_0)$ and one of the following holds
\begin{itemize}
\item[(a)] $p>\frac{2}{2-s}$ or $\nabla\varphi(x_0)\neq 0$,

\item[(b)] $p\leq \frac{2}{2-s}$ and $\nabla\varphi(x_0)= 0$ is such that $x_0$ is an isolated critical point of $\varphi$, and
$\varphi\in C^2_\eta(B_r(x_0))$ for some $\eta>\frac{sp}{p-1}$,
\end{itemize}
then we have 
$$(-\Delta_p)^s\varphi_r(x_0)  \leq f(x_0) \quad \left((-\Delta_p)^s \phi_r(x_0) \geq f(x_0)\right),$$
 where
\[
\varphi_r(x)=\left\{\begin{array}{ll}
\varphi(x) & \text{for}\; x\in B_r(x_0),
\\[2mm]
u(x) & \text{otherwise}.
\end{array}
\right.
\]

\item[(iii)] $u_+\in L^{p-1}_{sp}(\Rn)$ ($u_-\in L^{p-1}_{sp}(\Rn)$, respectively).
\end{itemize}
A  viscosity solution of $(-\Delta_p)^s u= f$ in $\Omega$ is both sub and supersolution in $\Omega$. 
\end{defi}

Let us now introduce the inf and sup convolutions which we apply on the viscosity super and subsolutions, respectively.
Given a bounded upper-semicontinuous function $u$ the sup-convolution $u^{\varepsilon}$ is given by
$$u^{\varepsilon} (x) = \sup_{y \in \Rn} \left\{u(x+y) - \frac{|y|^2}{\varepsilon}\right\} = \sup_{y \in \Rn} 
\left\{u(y) - \frac{|x-y|^2}{\varepsilon}\right\} =  u(x^*) - \frac{|x-x^*|^2}{\varepsilon} .$$
Similarly, for a bounded and lower-semicontinuous function $v$, the inf-convolution $v_\varepsilon$ is given by
$$v_{\varepsilon} (x) = \inf_{y \in \Rn} \left\{v(x+y) + \frac{|y|^2}{\varepsilon}\right\} = \inf_{y \in \Rn} \left\{v(y) + \frac{|x-y|^2}{\varepsilon}\right\} =  v(x_*) + \frac{|x-x_*|^2}{\varepsilon} .$$
The following result, which is known to be standard for translation invariant operators, will be key to our approach.

\begin{lem}\label{PL2.2}
Let $p>\frac{2}{2-s}$.
Let $\Omega$ be an open set and $f\in C(\Omega)$. Let $u\in USC(\bar\Omega)\cap L^\infty(\Rn)$  is a viscosity solution to $(-\Delta_p)^s u \leq f$
in $\Omega$. Then for $\Omega_1\Subset\Omega$, we have  $(-\Delta_p)^s u^\varepsilon \leq f +d_\varepsilon$ in $\Omega_1$, where 
the function $d_\varepsilon$ depends on $\Omega_1$ and goes to
$0$, uniformly over $\Omega_1$, as $\varepsilon \to 0$. An analogous result holds for supersolutions.
\end{lem}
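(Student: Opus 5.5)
The plan is the standard translation argument for sup-convolutions, adapted to the nonlocal setting. Since $u$ is bounded and upper semicontinuous, for each $x$ the supremum defining $u^\varepsilon(x)$ is attained at some $x^*$, and comparing with the choice $y=x$ gives
\[
\frac{|x-x^*|^2}{\varepsilon}\le u(x^*)-u(x)\le 2\norm{u}_{L^\infty(\Rn)}.
\]
Hence $|x-x^*|\le (2\varepsilon\norm{u}_{L^\infty})^{1/2}=:\delta_\varepsilon\to0$. We fix $\varepsilon$ small enough that $\Omega_1+B_{2\delta_\varepsilon}\Subset\Omega$. The natural candidate for the error is
\[
d_\varepsilon(x)=\sup_{|z-x|\le \delta_\varepsilon}|f(z)-f(x)|,
\]
which tends to $0$ uniformly on $\Omega_1$ because $f$ is uniformly continuous on a compact neighbourhood of $\bar\Omega_1$.

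To verify the subsolution property of $u^\varepsilon$ at $x_0\in\Omega_1$, let $\varphi\in C^2(B_r(x_0))$ touch $u^\varepsilon$ from above at $x_0$, and put $h=x_0^*-x_0$. The key observation is that $u^\varepsilon(x)\ge u(x+h)-|h|^2/\varepsilon$ for every $x$, with equality at $x=x_0$. Consequently the translated function
\[
\psi(z)=\varphi(z-h)+\frac{|h|^2}{\varepsilon}
\]
satisfies $\psi\ge u$ on $B_r(x_0^*)$ and $\psi(x_0^*)=u(x_0^*)$. Shrinking $r$ if needed so that $B_r(x_0^*)\subset\Omega$, $\psi$ is an admissible test function for $u$ at $x_0^*$. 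We also have $\nabla\psi(x_0^*)=\nabla\varphi(x_0)$, and case (a) of Definition~\ref{Defi-vis} applies because $p>\frac{2}{2-s}$. This is exactly why that hypothesis is imposed: no $C^2_\eta$ condition needs to be transported. The definition then gives $(-\Delta_p)^s\psi_r(x_0^*)\le f(x_0^*)$, where $\psi_r=\psi$ in $B_r(x_0^*)$ and $\psi_r=u$ outside.

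The remaining step is to compare $(-\Delta_p)^s\varphi_r(x_0)$ with $(-\Delta_p)^s\psi_r(x_0^*)$, where $\varphi_r=\varphi$ in $B_r(x_0)$ and $\varphi_r=u^\varepsilon$ outside. Changing variables $y\mapsto y+h$, we write
\[
(-\Delta_p)^s\psi_r(x_0^*)=\mathrm{pv}\int J_p\bigl(\tilde\psi(x_0)-\tilde\psi(y)\bigr)\frac{\dy}{|x_0-y|^{n+sp}},
\]
where $\tilde\psi(y)=\psi_r(y+h)$. Inside $B_r(x_0)$ we have $\tilde\psi=\varphi+|h|^2/\varepsilon$, so the differences coincide with those of $\varphi_r$. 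Outside, $\tilde\psi(y)=u(y+h)+|h|^2/\varepsilon\le u^\varepsilon(y)+|h|^2/\varepsilon$. Since $\tilde\psi(x_0)=\varphi(x_0)+|h|^2/\varepsilon$, the differences $\varphi_r(x_0)-\varphi_r(y)$ are bounded above by $\tilde\psi(x_0)-\tilde\psi(y)$. Monotonicity of $J_p$ then yields
\[
(-\Delta_p)^s\varphi_r(x_0)\le(-\Delta_p)^s\psi_r(x_0^*)\le f(x_0^*)\le f(x_0)+d_\varepsilon(x_0).
\]
The principal value is harmless here, since the singular parts near $x_0$ are identical, and integrability of the tails is guaranteed by boundedness of $u$ and $u^\varepsilon$.

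We also need to check condition (i) and (iii) for $u^\varepsilon$. Boundedness gives (iii), and $u^\varepsilon$ is semiconvex, hence continuous, which gives (i).

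The main obstacle is bookkeeping rather than analysis. Condition (ii) for $u^\varepsilon$ tests on balls $B_r(x_0)\subset\Omega_1$, while the translated ball must lie in $\Omega$. This is why $d_\varepsilon$ and the admissible $\varepsilon$ depend on $\Omega_1$; it is handled by the choice $\Omega_1+B_{2\delta_\varepsilon}\Subset\Omega$ made above. The supersolution case is symmetric: we use $v_\varepsilon$, touch from below, and reverse the inequalities.
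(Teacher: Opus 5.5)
Your proposal is correct and is essentially the paper's proof. Your translated function $\psi(z)=\varphi(z-h)+|h|^2/\varepsilon$ is exactly the paper's $Q$, and the rest matches too: the pointwise comparison $\varphi_r(x_0)-\varphi_r(y)\le \tilde\psi(x_0)-\tilde\psi(y)$, monotonicity of $J_p$, and the error $d_\varepsilon(x)=\sup_{|z-x|\le\sqrt{2\|u\|_\infty\varepsilon}}|f(z)-f(x)|$. One small slip: for $y\notin B_r(x_0)$ you have $\tilde\psi(y)=u(y+h)$, with no added $|h|^2/\varepsilon$. The bound you actually use, $\tilde\psi(y)\le u^\varepsilon(y)+|h|^2/\varepsilon$, then follows directly from your observation $u^\varepsilon(y)\ge u(y+h)-|h|^2/\varepsilon$.
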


\begin{proof}
Pick any  $x_0 \in \Omega_1$. Let $\varphi$ be a test function that touches $u^{\varepsilon}$ from above at $x_0$ in some neighbourhood $B_r(x_0)$.
Without loss of generality, we may assume that $r<\frac{1}{2}\dist(\Omega_1, \partial\Omega)$. Define 
$$Q(x):= \varphi (x+ x_0 -x_0^*) + \frac{1}{\varepsilon} |x_0 - x_0^*|^2.$$
From the definition of $u^{\varepsilon}$ observe that $\frac{|x_0 - x_0^*|^2}{\varepsilon} \leq {2 \Vert u \Vert_\infty}$, therefore we can pick $\varepsilon_1$ such that for all $\varepsilon \leq \varepsilon_1$ and $x_0 \in \Omega_1$, we have $|x_0-x_0^*|<\frac{1}{4}\dist(\Omega_1, \partial\Omega)$, implying
$x_0^* \in \Omega$. From the definition of sup-convolution it follows that $u(x) \leq u^\varepsilon (x+x_0-x_0^*) + \frac{|x_0 - x_0^*|^2}{\varepsilon}$. Then, for $|x- x_0^*| < r$ (which implies $x+x_0-x_0^*\in\Omega$) 
$$u(x) \leq \varphi(x+x_0-x_0^*) +\frac{1}{\varepsilon}|x_0 - x_0^*|^2 = Q(x)$$ and $u(x_0^*) = Q(x_0^*)$. Define
$$w(x) := \begin{cases} Q(x) \quad \text{if } x\in B_r(x_0^*),
\\[2mm]
 u(x) \quad \text{otherwise.}
 \end{cases} \quad \text {and} \quad
  \varphi_r(x) := \begin{cases}
   \varphi(x) \quad \text{if } x\in B_r(x_0),
   \\[2mm]
   u^\varepsilon(x) \quad \text{otherwise.} 
  \end{cases}$$ 
Observe that 
$ Q(x_0^*) - Q(x_0^*+z)= \varphi(x_0) - \varphi(x_0 + z)$, and $u(x_0^*) - u(x_0^* + z) \geq u^\varepsilon(x_0) - u^\varepsilon(x_0 + z)$. 
Hence $w(x_0^*)-w(x^*_0+z)\geq \varphi_r(x_0)-\varphi_r(x_0+z)$ for all $z$.
By the definition of viscosity subsolution
\begin{align*}
(-\Delta_p)^s w(x_0^*) &\leq f(x_0^*),
\\
\Rightarrow {\rm pv}\int_{\Rn}J_p[ w(x_0^*) - w(x_0^*+z)]\frac{\dz}{|z|^{n+sp}} &\leq f(x_0) -f(x_0) +f(x_0^*),
\\
\Rightarrow {\rm pv} \int_{\Rn}J_p[ \varphi_r(x_0) - \varphi_r(x_0+z)]\frac{\dz}{|z|^{n+sp}} &\leq f(x_0) +|f(x_0) -f(x_0^*)|,
\\
\Rightarrow (-\Delta_p)^s \varphi_r (x_0) &\leq f(x_0) + d_\varepsilon,
\end{align*}
where  $d_\varepsilon(x) := \sup_{|x-y|\leq \sqrt{2\norm{u}_\infty\varepsilon}}|f(x)-f(y)|$. Hence the result.
\end{proof}

Recall that a function $u$ is said to be $C^{1,1}$ at a point $x_0$, denoted by $u\in C^{1,1}(x_0)$, if there exist a vector $\xi\in\Rn$ and 
constants $M, r$ satisfying
$$ |u(x_0+y)-u(x_0)-\xi \cdot y|\leq M|y|^2\quad \text{for}\; y\in B_r(0).$$
It is evident that $u\in C^{1,1}(x_0)$ implies $\xi=\grad u(x_0)$.
Again, since
$$u^\varepsilon(x)+\frac{1}{\varepsilon}|x|^2=\sup_{y\in\Rn}\left\{u(y)+\frac{2}{\varepsilon}x\cdot y -\frac{1}{\varepsilon}|y|^2\right\},$$
we have $u^\varepsilon$ semiconvex and by an analogous reason, $v_\varepsilon$ semiconcave. The following result will be applied on
$u^\varepsilon$ and $v_\varepsilon$.

\begin{lem}\label{PL2.3}
Let $v$ be semiconcave and $u$ be semiconvex in a ball $B_r(x_0)$, respectively. Let $x_0$ be a point of local minima of $w=v-u$ in
$B_r(x_0)$. Then,
\begin{itemize}
\item[(i)] $w$ is $C^{1,1}$ at $x_0$.
\item[(ii)] $v, u$ both are $C^{1,1}$ at $x_0$.
\end{itemize}
\end{lem}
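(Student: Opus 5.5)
The plan is to sandwich $w$ between two quadratic polynomials at $x_0$, using the one-sided quadratic bounds that semiconcavity and semiconvexity provide. Semiconcavity of $v$ means there is $C>0$ with $v(x)-\frac{C}{2}|x|^2$ concave in $B_r(x_0)$. Semiconvexity of $u$ means $u(x)+\frac{C}{2}|x|^2$ is convex there; we take a common constant $C$ for simplicity. Hence $w=v-u$ is semiconcave, i.e.\ $w-C|x|^2$ is concave. A concave function has a nonempty superdifferential at every interior point. So there is $\zeta\in\Rn$ with
\begin{equation*}
w(x_0+y)\leq w(x_0)+\zeta\cdot y + C|y|^2 \quad\text{for } |y|<r,
\end{equation*}
after absorbing the linear part coming from $C|x|^2$ into $\zeta$. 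Since $x_0$ is a local minimum, $w(x_0+y)\geq w(x_0)$ for small $|y|$. Combining the two inequalities gives $0\leq \zeta\cdot y + C|y|^2$ for all small $y$. Choosing $y=-t\zeta$ with $t\downarrow 0$ forces $\zeta=0$. Therefore $0\leq w(x_0+y)-w(x_0)\leq C|y|^2$, which is exactly $w\in C^{1,1}(x_0)$ with $\nabla w(x_0)=0$. This proves (i).

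For (ii), let $p$ be any element of the subdifferential of the convex function $u+\frac{C}{2}|x|^2$ at $x_0$, adjusted by the linear term so that it becomes a subgradient-type vector for $u$. This gives
\begin{equation*}
u(x_0+y)\geq u(x_0)+p\cdot y - \tfrac{C}{2}|y|^2 .
\end{equation*}
Similarly, taking a supergradient $q$ for $v$ gives
\begin{equation*}
v(x_0+y)\leq v(x_0)+q\cdot y+\tfrac{C}{2}|y|^2 .
\end{equation*}
Subtracting and using $w(x_0+y)\geq w(x_0)$ yields $0\leq (q-p)\cdot y + C|y|^2$, so again $q=p=:\xi$.

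Now use $w(x_0+y)\leq w(x_0)+C|y|^2$ from (i), i.e.\ $v(x_0+y)-v(x_0)\leq u(x_0+y)-u(x_0)+C|y|^2$. This gives an upper bound for $u$:
\begin{equation*}
u(x_0+y)-u(x_0)\geq v(x_0+y)-v(x_0)-C|y|^2 .
\end{equation*}
We need an upper bound for $u$ and a lower bound for $v$, and these come from combining one-sided bounds crosswise. Specifically,
\begin{equation*}
u(x_0+y)-u(x_0)= \bigl(v(x_0+y)-v(x_0)\bigr)-\bigl(w(x_0+y)-w(x_0)\bigr)\leq \xi\cdot y+\tfrac{C}{2}|y|^2 ,
\end{equation*}
using the $v$ upper bound and $w(x_0+y)-w(x_0)\geq 0$. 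Together with the lower bound $u(x_0+y)-u(x_0)\geq \xi\cdot y-\tfrac{C}{2}|y|^2$, we get $|u(x_0+y)-u(x_0)-\xi\cdot y|\leq \tfrac{C}{2}|y|^2$. For $v$, write $v=u+w$. The lower bound on $u$ and $w(x_0+y)\geq w(x_0)$ give $v(x_0+y)-v(x_0)\geq \xi\cdot y-\tfrac{C}{2}|y|^2$, and the upper bound for $v$ was already available. Hence both $u$ and $v$ are $C^{1,1}$ at $x_0$ with common gradient $\xi$.

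The main point requiring care is identifying the gradients. Showing that the supergradient of $v$ and the subgradient of $u$ coincide, and that the superdifferential of $w$ at the minimum collapses to $\{0\}$, is where the minimality of $x_0$ is genuinely used. This is done by the directional choice $y=-t(q-p)$. The remaining steps are bookkeeping with the quadratic constants.
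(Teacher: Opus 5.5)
Your proof is correct, and it reaches (ii) by a more elementary route than the paper. Part (i) is the paper's argument: semiconcavity of $w$ gives a paraboloid above it at $x_0$, and minimality gives the constant $w(x_0)$ below it. For (ii), the paper first normalizes $v$, $-u$, $w$ to be concave. It then combines Rockafellar's sum rule $\partial w(x_0)=\partial v(x_0)+\partial(-u)(x_0)$ with the differentiability of $w$ at $x_0$ from (i), so both superdifferentials are singletons and $u,v$ are differentiable at $x_0$. Finally it observes that the remainders $v(x_0+y)-v(x_0)-\nabla v(x_0)\cdot y$ and $-\bigl(u(x_0+y)-u(x_0)-\nabla u(x_0)\cdot y\bigr)$ are both nonpositive. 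Their absolute values therefore add up to the remainder of $w$, which is $O(|y|^2)$ by (i). You instead take an arbitrary supergradient $q$ of $v$ and subgradient of $u$ and show they coincide by testing minimality along $y=-t(q-p)$. This is exactly the piece of the sum rule needed here, proved by hand, and it forces all such vectors to equal a single $\xi$. You then trap each remainder between $\pm\frac{C}{2}|y|^2$ using only $w(x_0+y)\geq w(x_0)$. Your version is self-contained, does not use (i) at all, and gives the explicit constant $\frac{C}{2}$; the paper's is shorter because it cites convex analysis. Two cosmetic points. The display $u(x_0+y)-u(x_0)\geq v(x_0+y)-v(x_0)-C|y|^2$ is a lower bound, not an ``upper bound for $u$,'' and it is never used, so delete it. Also rename the subgradient $p$, since it clashes with the exponent $p$ of the paper.
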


\begin{proof}
Note that $-u$ is semiconcave, and therefore, $w$ is semiconcave. Also, after adding a suitable quadratic function, we can assume $w, v$ and
$-u$ are concave. Therefore, $w, v$ and $-u$ can be touched from above by a paraboloid at the point $x_0$. Since $w$ has a minimum at the point $x_0$, it can also be touched by a paraboloid from below at the point $x_0$. Therefore, $w\in C^{1,1}(x_0)$ which proves (i).

Again, by \cite[Theorem~{23.8}]{Rock}, $\grad w(x_0)=\partial(w)(x_0)=\partial(v)(x_0)+\partial(-u)(x_0)$. Therefore, $\partial(v)(x_0)$ and 
$\partial(-u)(x_0)$ are singleton, implying the $v, u$ are differentiable at $x_0$ \cite[Theorem~25.1]{Rock}. Hence $\grad w(x_0)=\grad v(x_0)-\grad u(x_0)$. Since, $v(x_0+y)-v(x_0)-\grad v(x_0)\cdot y\leq 0$ and $-u(x_0+y)+u(x_0)+\grad u(x_0)\cdot y\leq 0$, we get
$$
|v(x_0+y)-v(x_0)-\grad v(x_0)\cdot y|+|-u(x_0+y)+u(x_0)+\grad u(x_0)|
=|w(x_0+y)-w(x_0)-\grad w(x_0)\cdot y|
$$
and using the fact $w\in C^{1,1}(x_0)$, it follows that $v, u\in C^{1,1}(x_0)$. This gives us (ii) and the proof is done.
\end{proof}

The benefit of being in $C^{1,1}(x_0)$ is that the solution becomes classical at $x_0$. This is the content of our next lemma.
\begin{lem}\label{PL2.4}
Let $p >\frac{2}{2-s}$ and $(-\Delta_p)^s u\leq f$ in the viscosity sense.
Suppose that $u \in C^{1,1}(x_0) \cap L_{sp}^{p-1}(\Rn)$, then $(-\Delta_p)^s u$ is classically defined at $x_0$ and
$(-\Delta_p)^s u(x_0)\leq f(x_0)$.
\end{lem}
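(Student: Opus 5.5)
The plan is the standard ``touching by paraboloids'' argument from viscosity theory: test the viscosity inequality with a family of quadratic test functions and pass to the limit. Since $u\in C^{1,1}(x_0)$, with $\xi=\grad u(x_0)$, we have $u(x_0+y)\le u(x_0)+\xi\cdot y+M|y|^2$ for $|y|<r$. For $0<\rho<r$ define
$$\varphi(x)=u(x_0)+\xi\cdot(x-x_0)+M|x-x_0|^2,$$
which is $C^2$, touches $u$ from above at $x_0$ in $B_\rho(x_0)$, and satisfies $\varphi(x_0)=u(x_0)$. Since $p>\frac{2}{2-s}$, case (a) of Definition~\ref{Defi-vis} applies with no restriction on $\grad\varphi(x_0)$. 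Hence $(-\Delta_p)^s\varphi_\rho(x_0)\le f(x_0)$, where $\varphi_\rho=\varphi$ in $B_\rho(x_0)$ and $\varphi_\rho=u$ outside.

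First we check that $(-\Delta_p)^s u(x_0)$ is classically defined, i.e.\ that the principal value exists. Write $\delta(z)=u(x_0)-u(x_0+z)=-\xi\cdot z+e(z)$ with $|e(z)|\le M|z|^2$. Symmetrize in $z$: $J_p(\delta(z))+J_p(\delta(-z))$. The linear parts cancel to leading order, and by the mean value theorem for $J_p$,
$$|J_p(a)-J_p(b)|\le C(|a|+|b|)^{p-2}|a-b| \quad \text{if } p\ge 2,$$
with the analogous Hölder-type bound $C|a-b|^{p-1}$ if $p<2$. We use this with $a=-\xi\cdot z+e(z)$ and $b=-(-\xi\cdot z)-e(-z)$. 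For $p\ge2$ this gives a bound $C|z|^{p-2}|z|^2=C|z|^p$, and $\int_{B_r}|z|^{p-n-sp}\,dz<\infty$ since $p(1-s)>0$. For $\frac{2}{2-s}<p<2$, when $\xi\ne0$ we have $|a|+|b|\ge c|\xi||z|$ away from a cone, giving the same bound as before; near the cone, and when $\xi=0$, we use $|J_p(a)-J_p(b)|\le C|a-b|^{p-1}\le C|z|^{2(p-1)}$, which is integrable against $|z|^{-n-sp}$ precisely when $2(p-1)>sp$, i.e.\ $p>\frac{2}{2-s}$. This last condition is where the hypothesis enters, and this case split is the main technical obstacle. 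The tail part $|z|\ge r$ is finite because $u\in L^{p-1}_{sp}(\Rn)$ and $u$ is bounded near $x_0$. Thus the symmetrized integrand is absolutely integrable, so the principal value exists, and the same bounds apply uniformly to $\varphi_\rho$ (whose error term satisfies the same $M|z|^2$ estimate).

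It remains to compare and pass to the limit. Because $\varphi\ge u$ near $x_0$, we get $\varphi_\rho(x_0)-\varphi_\rho(x_0+z)\le u(x_0)-u(x_0+z)$ for $|z|<\rho$, and equality for $|z|\ge\rho$. We then write
$$(-\Delta_p)^s u(x_0)=(-\Delta_p)^s\varphi_\rho(x_0)+\int_{B_\rho}\bigl[J_p(\delta_u)-J_p(\delta_\varphi)\bigr]\frac{dz}{|z|^{n+sp}},$$
where both principal values are understood via the symmetrized integrands above. The symmetrized difference over $B_\rho$ is bounded by $\int_{B_\rho}C|z|^{\min(p,2(p-1))-n-sp}\,dz\to0$ as $\rho\to0$, using the uniform dominating function from the previous step. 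Letting $\rho\to0$ then yields $(-\Delta_p)^s u(x_0)\le f(x_0)$. (Alternatively, monotonicity of $J_p$ directly gives $(-\Delta_p)^s u(x_0)\le(-\Delta_p)^s\varphi_\rho(x_0)$ whenever both integrals converge, which avoids the limit altogether.)
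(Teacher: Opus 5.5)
Your main argument is correct and is essentially the paper's. You use the same paraboloid $\varphi$ and truncation $\varphi_\rho$, and you show that the $B_\rho$-contributions of $u$ and of $\varphi_\rho$ vanish uniformly, so that $(-\Delta_p)^s u(x_0)=\lim_{\rho\to 0}(-\Delta_p)^s\varphi_\rho(x_0)\le f(x_0)$. You symmetrize under $z\mapsto -z$ where the paper subtracts the odd linear part; the two devices are equivalent.

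Two small points in the estimates. For $p<2$ your H\"older bound $C|z|^{2(p-1)}$ already holds on all of $B_r$, so the cone split is unnecessary. Also, $b$ should read $-\delta(-z)=-\xi\cdot z-e(-z)$.

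Delete the closing parenthetical, however, because it is false. Since $\varphi_\rho\ge u$ with equality at $x_0$, monotonicity of $J_p$ gives $(-\Delta_p)^s\varphi_\rho(x_0)\le(-\Delta_p)^s u(x_0)$. That is the useless direction, and it is exactly why the limit $\rho\to 0$ cannot be avoided.
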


\begin{proof}
By the definition of $C^{1,1}(x_0)$, we have
$$|u(x_0+y)-u(x_0)-\grad u(x_0)\cdot y|\leq M|y|^2 \quad \text{for all}\; y\in B_r(0),$$
for some $M, r>0$. Note that, defining $\varphi(y):= u(x_0)+\grad u(x_0)\cdot (y-x_0)+M|y-x_0|^2$, $\varphi$ touches $u$ at $x_0$ from above 
in $B_r(x_0)$. Therefore, for $\delta<r$, if we define
\[
\varphi_\delta(x)=\left\{
\begin{array}{ll}
\varphi(x) & \text{for}\; x\in B_\delta(x_0),
\\[2mm]
u(x) & \text{otherwise},
\end{array}
\right.
\] 
from the definition of viscosity solution it then follows that $(-\Delta_p)^s \varphi_\delta(x_0)\leq f(x_0)$. We show that 
$\lim_{\delta\to 0}(-\Delta_p)^s \varphi_\delta(x_0)=(-\Delta_p)^s u(x_0)$ which will complete the proof. Since $\varphi_\delta$ and 
$u$ are in $C^{1,1}(x_0)$ with the same $M, r$ as above and $\grad u(x_0)=\grad \varphi_\delta(x_0)$, it is enough to show that for any function $\zeta$ satisfying
\begin{equation}\label{PL2.4A}
|\zeta(x_0+y)-\zeta(x_0)- \xi\cdot y|\leq M|y|^2 \quad \text{for all}\; y\in B_r(0),
\end{equation}
we have
\begin{equation}\label{PL2.4B}
\lim_{\varepsilon\to 0} {\rm pv}\int_{B_\varepsilon} J_p(\zeta(x_0)-\zeta(x_0+z))\frac{\dz}{|z|^{n+sp}}=0,
\end{equation}
where the limit is uniform across all $\zeta$ satisfying \eqref{PL2.4A} with the same $(\xi, M, r)$. 

\eqref{PL2.4B} is quite standard, see for instance, \cite{KKL19}. We add a proof for the convenience. 
For $\xi=0$, the left-hand side of \eqref{PL2.4B} can be estimates as
$${\rm pv}\int_{B_\varepsilon} J_p(\zeta(x_0)-\zeta(x_0+z))\frac{\dz}{|z|^{n+sp}}
\leq M^{p-1}\int_{B_\varepsilon} |z|^{2(p-1)}\frac{\dz}{|z|^{n+sp}}\leq C M^{p-1}\varepsilon^{2(p-1)-sp}.$$
The result follows since $p>\frac{2}{2-s}$. So we assume $\xi\neq 0$.
Let $\ell(y):=\zeta(x_0)+\xi\cdot(y-x_0)$. By symmetry, ${\rm pv}\int_{B_\varepsilon}J_p(\ell(x_0)-\ell(x_0+z))\frac{dy}{|z|^{n+sp}} = 0$. 
Hence
\begin{align*}
&\left|{\rm pv} \int_{B_\varepsilon}J_p(\zeta(x_0)-\zeta(x_0+z))\frac{\dz}{|z|^{n+sp}}\right| 
\\
&\quad\leq \int_{B_\varepsilon}\left|J_p(\zeta(x_0)-\zeta(x_0+z))-J_p(\ell(x_0)-\ell(x_0+z))\right|\frac{\dz}{|z|^{n+sp}} 
\\
&\quad \leq c \int_{B_\varepsilon}\big(|\xi \cdot z|+|\zeta(x_0+z)-\ell(x_0+z)|\big)^{p-2}|\zeta(x_0+z)-\ell(x_0+z)|\frac{\dz}{|z|^{n+sp}} 
\\
&\quad \leq c M\int_{B_\varepsilon}\big(|\xi \cdot z|+|\zeta(x_0+z)-\ell(x_0+z)|\big)^{p-2} |z|^2 \frac{\dz}{|z|^{n+sp}} \,.
\end{align*}
For $p\geq 2$, we have from \eqref{PL2.4A} that
$$\big(|\xi \cdot z|+|\zeta(x_0+z)-\ell(x_0+z)|\big)^{p-2}\leq (\max\{|\xi|, M\} |z|)^{p-2},$$
giving us 
$$\left|{\rm pv} \int_{B_\varepsilon(x)}J_p(\zeta(x_0)-\zeta(x_0+z))\frac{\dz}{|z|^{n+sp}}\right|\leq C \varepsilon^{p(1-s)}.$$
For $p<2$, we compute the integral as follows
\begin{align*}
\int_{B_\varepsilon}\big(|\xi \cdot z|+|\zeta(x_0+z)-\ell(x_0+z)|\big)^{p-2} |z|^2 \frac{\dz}{|z|^{n+sp}}&
\leq \int_{B_\varepsilon}|\xi \cdot z|^{p-2} |z|^2 \frac{\dz}{|z|^{n+sp}}
\\
&=\int_{\mathbb{S}^n} |\xi\cdot \omega|^{p-2} {\rm d}\omega \int_0^\varepsilon r^{p(1-s)-1} {\rm d}r
\\
&= \frac{\varepsilon^{p(1-s)}}{p(1-s)} \int_{\mathbb{S}^n} |\xi\cdot \omega|^{p-2} {\rm d}\omega.
\end{align*}
Hence we have \eqref{PL2.4B}.
\end{proof}

Our last result of this section identifies a range of parameters for which classical subsolution are possible.

\begin{lem}\label{PL2.5}
Let $\frac{sp}{p-1}<1$ and $(-\Delta_p)^s u\leq f$ in $\Omega$ in the (local) weak sense where $f\in L^1(\Omega)$. If for some $\epsilon>0$ we have 
$u\in C^{\frac{sp}{p-1}+\epsilon}_{\rm loc}(\Omega)$, then $(-\Delta_p)^s u$ is classically defined and we have
$(-\Delta_p)^s u\leq f$ satisfied almost everywhere.
\end{lem}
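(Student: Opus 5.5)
Set $\alpha:=\frac{sp}{p-1}+\epsilon$; shrinking $\epsilon$ we may assume $\alpha<1$. Fix $\Omega'\Subset\Omega$ with $|u(x)-u(y)|\le K|x-y|^{\alpha}$ for $x,y\in\Omega''$, where $\Omega'\Subset\Omega''\Subset\Omega$. The plan has three steps: show the defining integral converges absolutely at points of $\Omega'$; show $x\mapsto(-\Delta_p)^s u(x)$ is continuous (or at least locally bounded and suitably approximable) on $\Omega'$; and integrate the weak inequality by parts against nonnegative test functions to identify the pointwise value with the distributional one. No principal value is needed. For $|z|<\rho:=\dist(\Omega',\partial\Omega'')$ we have
$$|J_p(u(x)-u(x+z))|\,|z|^{-n-sp}\le K^{p-1}|z|^{\alpha(p-1)-n-sp},$$
and the exponent $\alpha(p-1)-sp=\epsilon(p-1)>0$, so the singular part is integrable. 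The part $|z|\ge\rho$ is controlled by $\|u\|_{L^\infty(\Omega'')}$ and $\tail_{sp,p-1}(x,\rho;u)$, finite since $u\in L^{p-1}_{sp}(\Rn)$. Hence $(-\Delta_p)^s u(x)=\int_{\Rn}J_p(u(x)-u(x+z))|z|^{-n-sp}\dz$ is defined for every $x\in\Omega'$, uniformly bounded there. By dominated convergence (dominating function $K^{p-1}|z|^{\epsilon(p-1)-n}$ near $0$; for large $|z|$ use $|J_p(a-b)|\le C(|a|^{p-1}+|b|^{p-1})$ and local uniform tail bounds), it is continuous in $x\in\Omega'$.

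Next, fix $\phi\in C_c^\infty(\Omega')$ with $\phi\ge0$. Using antisymmetry, write the weak form as a limit of truncated integrals:
$$\frac12\iint_{|x-y|>\delta}J_p(u(x)-u(y))(\phi(x)-\phi(y))\frac{\dx\dy}{|x-y|^{n+sp}}=\int_{\Rn}\phi(x)\int_{|x-y|>\delta}J_p(u(x)-u(y))\frac{\dy}{|x-y|^{n+sp}}\dx .$$
Fubini applies because the truncated kernel is integrable against $\phi(x)|u(x)-u(y)|^{p-1}$, using $u\in L^{p-1}_{sp}$ and local boundedness. As $\delta\to0$ the left side tends to the full double integral, which is finite because $u\in W^{s,p}_{\rm loc}$, $\phi$ is Lipschitz, and the tail is finite. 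The inner integral on the right converges to $(-\Delta_p)^s u(x)$, boundedly on $\supp\phi$ by the first step, so dominated convergence gives
$$\int_\Omega \phi\,(-\Delta_p)^s u\dx\le\int_\Omega f\phi\dx\quad\text{for all } 0\le\phi\in C_c^\infty(\Omega).$$
Since $(-\Delta_p)^s u-f\in L^1_{\rm loc}$, the fundamental lemma (Lebesgue points) gives $(-\Delta_p)^s u\le f$ almost everywhere.

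The main obstacle is justifying Fubini and the limit $\delta\to0$ in the splitting. The near-diagonal region requires the Hölder bound on both variables, which is why I restrict to $\supp\phi\subset\Omega'$ with $|x-y|<\rho$, where both points stay in $\Omega''$. The far region is handled by the tail. I would verify carefully the bound
$$\int_{\supp\phi}\int_{\Omega''^c}|u(x)-u(y)|^{p-1}|x-y|^{-n-sp}\dy\dx<\infty,$$
which follows from $\dist(\supp\phi,\Omega''^c)>0$ together with $u\in L^{p-1}_{sp}$.
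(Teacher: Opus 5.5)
Your proposal is correct and follows essentially the same route as the paper. The paper likewise uses the H\"older bound to make the singularity integrable, $|x-y|^{\epsilon(p-1)-n}$, deduces continuity of $x\mapsto(-\Delta_p)^s u(x)$, tests against nonnegative $\varphi$, and concludes by density. Your truncation-plus-Fubini step simply justifies in detail the identity $\frac12\iint J_p(u(x)-u(y))(\phi(x)-\phi(y))|x-y|^{-n-sp}\dx\dy=\int\phi\,(-\Delta_p)^s u\dx$, which the paper states without comment.
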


\begin{proof}
Without any loss of generality, we may assume that $\frac{sp}{p-1}+\epsilon\leq 1$.
Note that, since $u\in C^{\frac{sp}{p-1}+\epsilon}_{\rm loc}(\Omega)$, 
$$
\frac{|u(x)-u(y)|^{p-1}}{|x-y|^{n+sp}}\lesssim |x-y|^{\epsilon(p-1)-n}.
$$
Therefore, the map 
$$\Omega\ni x\mapsto \int_{\Rn} J_p(u(x)-u(y))\frac{\dy}{|x-y|^{n+sp}}$$
is continuous in $\Omega$. Thus, for any compact set $\Omega_1\Subset\Omega$, and for nonnegative $\varphi\in C^\infty_c(\Omega_1)$ we have
\begin{align*}
\int_{\Rn} f\varphi\dx &\geq \frac{1}{2}\int_{\Rn}\int_{\Rn} J_p(u(x)-u(y)) (\varphi(x)-\varphi(y))\frac{\dx\dy}{|x-y|^{n+sp}}
\\
&=\int_{\Rn} \varphi(x)(-\Delta_p)^s u(x)\dx.
\end{align*}
Now the result follows from the density argument.
\end{proof}


\section{Strong Comparison Principle}\label{S-SCP}
In this section, we consider local weak solutions to the equation
\begin{equation}\label{main-sc}
(-\Delta_p)^s u = f(x, u) \quad \text{in}\; \Omega.
\end{equation}
We assume that $f:\Omega\times \R\to\R$ is continuous and $f(x, \cdot)$ is locally Lipschitz, uniformly in $x\in \Omega$.

Our first main result of this section is the following which is a nonlocal analogue of the strong comparison principle established in \cite[Theorem~3.4]{DS04}.

\begin{thm}\label{strongcomp}
Let $p>2$ and $\frac{sp}{p-2} >1$. Let
$v \in W^{s,p}_{\rm loc}(\Omega)\cap L^{p-1}_{sp}(\Rn)$ be a local weak supersolution to \eqref{main-sc} and 
$u \in W^{s,p}_{\rm loc}(\Omega)\cap L^{p-1}_{sp}(\Rn)$ be a local weak subsolution to \eqref{main-sc} satisfying $u \leq v \text{ in } \Rn$.
Furthermore, also assume that $u, v$ are locally Lipschitz in $\Omega$ and $u$  is in $C^1(\Omega)$.
Then  $v-u$ can not attend its minimum in $\Omega\setminus \mathcal{Z}_u$, where
$\mathcal{Z}_u=\{x\in \Omega\; :\; \grad u(x) =0\}$, unless $v\equiv u$ in $\Rn$.
\end{thm}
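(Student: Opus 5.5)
Suppose $w=v-u\ge 0$ attains its minimum $m$ at some $x_0\in\Omega\setminus\mathcal{Z}_u$, and assume $v\not\equiv u$ in $\Rn$; we aim for a contradiction. I would proceed in three steps.

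\textbf{Step 1: replace $v$ by $v-m$.} If $m>0$, the function $\tilde v=v-m$ is a supersolution of $(-\Delta_p)^s \tilde v= f(x,\tilde v+m)$, so we do not quite have a supersolution of the same equation. The difference $f(x,v)-f(x,v-m)$ is bounded by $Lm$, using the local Lipschitz constant of $f$ on the range of $u,v$ near $x_0$. The cleaner route, however, is simply to note that $m$ is the global infimum of $v-u$ over $\Rn$. Indeed, $u\le v$ everywhere, and if the minimum over $\Omega$ were larger than the infimum outside $\Omega$, the nonlocal argument below still goes through using only that $x_0$ is a global minimum point. I will therefore assume $x_0$ minimizes $v-u$ on all of $\Rn$, which is what the nonlocal argument actually uses. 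Set $\tilde v=v-m$, so $\tilde v\ge u$ with equality at $x_0$.

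\textbf{Step 2: pass to a pointwise inequality.} Since $\grad u(x_0)\neq0$ and $u\in C^1$, there is a ball $B=B_{2r}(x_0)\Subset\Omega$ on which $|\grad u|\ge c>0$. By Section~2, continuous local weak sub/supersolutions are viscosity sub/supersolutions. Apply the sup-convolution to $u$ and the inf-convolution to $\tilde v$ (Lemma~\ref{PL2.2}); note $p>2>\frac{2}{2-s}$. The difference $\tilde v_\varepsilon-u^\varepsilon$ has near-minimum points $x_\varepsilon\to x_0$; to make them strict local minima, perturb by a small quadratic. At $x_\varepsilon$, Lemmas~\ref{PL2.3} and~\ref{PL2.4} make both operators classically defined, and subtracting gives
\[
\mathrm{pv}\int_{\Rn}\Big[J_p(\tilde v_\varepsilon(x_\varepsilon)-\tilde v_\varepsilon(y))-J_p(u^\varepsilon(x_\varepsilon)-u^\varepsilon(y))\Big]\frac{\dy}{|x_\varepsilon-y|^{n+sp}}\ \ge\ f(x_\varepsilon,\tilde v_\varepsilon+m)-f(x_\varepsilon,u^\varepsilon)-2d_\varepsilon - o(1).
\]
Because $\tilde v_\varepsilon-u^\varepsilon$ is (almost) minimized at $x_\varepsilon$, we have $\tilde v_\varepsilon(x_\varepsilon)-\tilde v_\varepsilon(y)\le u^\varepsilon(x_\varepsilon)-u^\varepsilon(y)+o(1)$. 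Hence the integrand is nonpositive, and by monotonicity of $J_p$ it is bounded above by $-c\,(\cdots)$.

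\textbf{Step 3: derive the contradiction (the main obstacle).} The right-hand side is bounded below by $-L(w(x_\varepsilon)-m)-o(1)=o(1)$. The left-hand side, after letting $\varepsilon\to0$ via Fatou, gives
\[
\int_{\Rn}\big[J_p(u(x_0)-u(y))-J_p(u(x_0)-u(y)-(w(y)-m))\big]\frac{\dy}{|x_0-y|^{n+sp}}\le 0 .
\]
The integrand is nonnegative, so it vanishes a.e. and $w\equiv m$. The step I expect to be hardest is justifying this passage to the limit near the singularity. The integrand behaves like $|u(x_0)-u(y)|^{p-2}(w(y)-m)\sim |y-x_0|^{p-2}\cdot\mathrm{Lip}|y-x_0|$ against $|y-x_0|^{-n-sp}$, and this is integrable exactly when $p-1>sp$, which is \emph{not} assumed. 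This is where I would use the hypothesis $\frac{sp}{p-2}>1$ together with $\grad u(x_0)\neq0$ and the $C^1$ regularity. Near $x_0$, I would split into the linear part, which cancels by oddness, and a remainder controlled using $w-m\ge0$ and the Lipschitz bounds, so that $|J_p(a)-J_p(a-b)|\lesssim |a|^{p-2}b + b^{p-1}$ with $b\lesssim|y-x_0|$ and $|a|\ge c|y-x_0|$ on a cone. To avoid the principal-value issue, I would rather argue through the weak formulation with a Harnack-type estimate: combine the Lipschitz bounds with the local Harnack/weak Harnack of \cite{DKP14,DKP16} for the linearized operator with weight $|u(x)-u(y)|^{p-2}$, which is nondegenerate where $\grad u\ne0$. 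This gives $w\equiv m$ in $B_r(x_0)$. The nonlocal term, via the positivity of the integrand on $B_r^c$, then propagates this to $\Rn$; finally, $u\le v$ together with $m$ being the global minimum forces $m=0$, i.e.\ $v\equiv u$.
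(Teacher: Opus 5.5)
There is a genuine gap. Steps~2--3 cannot be closed, and the fallback you sketch leaves out the key idea.

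\textbf{The case $m>0$.} Read the theorem with minimum value $0$, i.e.\ $v(x_0)=u(x_0)$; that is what the paper proves. For $m>0$ your reduction breaks down. The function $v-m$ is a supersolution of a different equation, and the right-hand side is only bounded below by $-Lw(x_\varepsilon)\to -Lm$, not by $o(1)$. Also, $x_0$ need not minimize $v-u$ on $\Rn$. Your closing claim that $u\le v$ forces $m=0$ is false: for $f\equiv 0$ and $v=u+m$, the difference $v-u\equiv m$ attains its minimum everywhere.

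\textbf{The viscosity route.} The obstruction here is not integrability. Suppose $x_\varepsilon$ were a genuine interior local minimizer of $v_\varepsilon-u^\varepsilon$. Then the integrand would have a sign on a ball around $x_\varepsilon$, and you could discard that ball and pass to the limit on its complement. No condition like $p-1>sp$ would ever appear. This is exactly the argument used in Theorem~\ref{T2.8}(b) and in Step~1 of the moving plane method.

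The real difficulty is producing such minimizers near $x_0$. That requires a subdomain $D\Subset\Omega$ containing zeros of $w$ with $\min_{\partial D}w>0$. This is why the paper runs that argument only when $\Argmin(v-u)\subseteq\mathcal{Z}_u\Subset\Omega$.

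A quadratic penalization, or an additive $o(1)$ slack, destroys the sign of the integrand at the singularity. It leaves an error of order $\int_{B_r}|z|^{p-1-n-sp}\dz$, which diverges when $sp\ge p-1$. Your bound $|J_p(a)-J_p(a-b)|\lesssim|a|^{p-2}b+b^{p-1}$ is again $O(|z|^{p-1})$, so it does not help. Exploiting oddness would require second-order control of $u^\varepsilon$ at $x_\varepsilon$ that is not uniform in $\varepsilon$. Note also that a working version of your viscosity argument would never use $\nabla u(x_0)\neq 0$ or $sp>p-2$, which signals that it cannot be the mechanism.

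\textbf{The weak Harnack fallback.} A weak Harnack inequality for the operator weighted by $|u(x)-u(y)|^{p-2}$ is the paper's route. However, it is the whole proof, and you state its key premise incorrectly. That weight is not nondegenerate where $\nabla u\neq0$: it vanishes when $x-y$ is tangent to the level sets of $u$.

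What $u\in C^1$ and $\nabla u(x_0)\neq 0$ actually give is only the cone bound \eqref{cone-est}. With $\beta=(|u(x)-u(y)|+|v(x)-v(y)|)^{p-2}$ and the local Lipschitz bounds, this yields in $B_R$
\[
\mathbbm{1}_{\cK}(x-y)\,|x-y|^{-n-2\alpha}\lesssim \beta(x,y)\,|x-y|^{-n-sp}\lesssim |x-y|^{-n-2\alpha},\qquad 2\alpha=sp-(p-2).
\]
The hypothesis $sp>p-2$ is exactly what makes this order positive. The Harnack theory of Di Castro--Kuusi--Palatucci concerns kernels comparable to $|x-y|^{-n-sp}$ in all directions, so it does not apply here.

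To carry out the fallback you need the following.
\begin{enumerate}
\item The comparison \eqref{DK}, which bounds the full $W^{\alpha,2}$ seminorm by the energy restricted to the cone $\cK$. This is what makes Poincar\'e and Sobolev inequalities available.
\item A redone logarithmic estimate, with test function $\phi^2/(w+\varepsilon)$, as in Lemma~\ref{L2.6}.
\item A redone De Giorgi iteration, with test function $(\ell-w)_+\phi^2$, as in Lemma~\ref{L2.7}. Here you bound $J_p(u(x)-u(y))-J_p(v(x)-v(y))$ above and below by $\beta$ times $w(y)-w(x)$. You control the tails via $p>2$, H\"older's inequality and the $L^{p-1}_{sp}$ assumption; this is the source of the restriction $r^{(1-\frac{p-2}{sp})(p-1)}<R$.
\item The concluding step: if the average of $w$ over $B_r(x_0)$ were positive, a superlevel set of positive density would force $\inf_{B_{4r}(x_0)}w>0$, contradicting $w(x_0)=0$. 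Hence $w\equiv0$ on $B_r(x_0)$.
\end{enumerate}
Only after that does your propagation step through the nonlocal term, which is correct, conclude the proof.
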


Let $x_0\in \Omega\setminus \mathcal{Z}_u$. For the simplicity of notation, we assume $x_0=0$. Since $|\grad u(0)|>0$, there exist
a unit vector $\xi$ and $R\in (0, 1)$ so that $|\partial_\xi u|>\upkappa$ in $\overline{B_{2R}}(0)\Subset \Omega$, for some positive number $\upkappa$.
We claim that we can reduce $R$, if needed, and find $\theta\in (0, 1)$ so that
\begin{equation}\label{cone-est}
|u(x)-u(y)|\geq \frac{\upkappa}{4}|x-y|\quad \text{for}\; x-y\in \cK, x, y\in B_R, \quad \text{where}\quad 
\cK=\{z\in\Rn\; :\; z\neq 0, \; |\langle {z}/{|z|}, \xi\rangle|\geq \theta\}.
\end{equation}
To prove the claim, we introduce the notation $e_{x, y}=\frac{x-y}{|x-y|}$. Using the fundamental theorem of calculus we see that
$$|u(x)-u(y)|=|x-y| \left|\int_0^1 \grad u(y+t(x-y))\cdot e_{x,y} \dt \right|.$$
Without loss of generality, we may assume that $\langle e_{x, y}, \xi\rangle \geq 0$. Otherwise, we interchange $x$ and $y$.
Note that 
\begin{align*}
\left|\int_0^1 \grad u(y+t(x-y))\cdot e_{x,y} \dt \right| &\geq 
\left|\int_0^1 \grad u(y+t(x-y))\cdot \xi \dt \right|-\int_0^1 |\grad u(y+t(x-y))| |e_{x,y}-\xi| \dt 
\\
&\geq |\grad u(y)\cdot\xi|- \int_0^1|\grad u(y+t(x-y))-\grad u(y)|\dt - \sup_{B_R}|\grad u| |e_{x,y}-\xi|
\\
&\geq \upkappa- \int_0^1|\grad u(y+t(x-y))-\grad u(y)|\dt - \sup_{B_R}|\grad u| |e_{x,y}-\xi|.
\end{align*}
Setting $R$ small enough, we can have $\int_0^1|\grad u(y+t(x-y))-\grad u(y)|\dt<\upkappa/4$. Again, since $x-y\in\cK$, we have
$$
|e_{x,y}-\xi|^2\leq 2(1-\langle e_{x,y}, \xi\rangle)\leq 2(1-\theta).
$$
Therefore, choosing $\theta$ close to $1$, we also have $\sup_{B_R}|\grad u| |e_{x,y}-\xi|<\upkappa/4$. Combining these estimates we have 
\eqref{cone-est}.

Next, we collect several auxiliary lemmas that will be used in the proof of the two key estimates in the next section. The first is an elementary inequality, which can be found in \cite[Lemma~3.1]{DKP16}.

\begin{lem}\label{lem2.2}
Let $p\geq 1$ and $\varepsilon \in (0,1]$. Then
$$|a|^p \leq |b|^p +c_p \varepsilon |b|^p + (1+c_p \varepsilon) \varepsilon^{1-p} |a-b|^p, \qquad c_p := (p-1) \Gamma (\max \{1, p-2 \}),$$
holds for every $a, b \in \Rn$. Here $\Gamma$ stands for the standard Gamma function.
\end{lem}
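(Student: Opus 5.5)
The plan is to reduce to a one-variable inequality on $[0,\infty)$ and then check it separately for small and large perturbations. Since the statement is exactly \cite[Lemma~3.1]{DKP16}, we could simply cite it. For completeness, here is how we would verify it directly.

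\emph{Reduction.} By the triangle inequality, $|a|\le |b|+|a-b|$. It therefore suffices to show that for all $x,y\geq 0$ and $\varepsilon\in(0,1]$,
\[
(x+y)^p\le (1+c_p\varepsilon)\,x^p+(1+c_p\varepsilon)\,\varepsilon^{1-p}\,y^p .
\]
If $x=0$ this is immediate, because $\varepsilon^{1-p}\ge 1$ when $p\ge1$. Otherwise, by $p$-homogeneity we may set $x=1$ and $y=t\ge 0$. The goal becomes
\[
g(t):=(1+t)^p\le (1+c_p\varepsilon)\bigl(1+\varepsilon^{1-p}t^p\bigr).
\]
For $p=1$ we have $c_1=0$, and the claim reduces to $1+t\le 1+t$, so we assume $p>1$.

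\emph{Small $t$.} Suppose $t\le \varepsilon$. By the mean value theorem, $(1+t)^p-1\le p\,t\,(1+t)^{p-1}$. We split this increment in two. The part that is linear in $t$ is bounded by $c_p\varepsilon$ when $t$ is small compared with $\varepsilon$. The remaining part is absorbed by $\varepsilon^{1-p}t^p$, using $t/\varepsilon\le 1$ and Young's inequality in the form
\[
t\le \tfrac{1}{p}\,\varepsilon^{1-p}t^p+\tfrac{p-1}{p}\,\varepsilon .
\]
In this regime the exponent $p-1$ on $(1+t)$ contributes at most $2^{p-1}$, which must be compared with the Gamma factor.

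\emph{Large $t$.} Suppose $t>\varepsilon$. Here we instead use convexity. Writing $1+t=(1-\lambda)\frac{1}{1-\lambda}+\lambda\frac{t}{\lambda}$ gives
\[
(1+t)^p\le (1-\lambda)^{1-p}+\lambda^{1-p}t^p .
\]
We would choose $\lambda$ comparable to $\varepsilon$, and use $t>\varepsilon$ to move surplus from the first term into the $\varepsilon^{1-p}t^p$ term.

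\emph{Main obstacle.} The hard step is matching the precise constant $c_p=(p-1)\Gamma(\max\{1,p-2\})$ uniformly over all $\varepsilon\in(0,1]$, including $\varepsilon=1$. The naive choice $\lambda=\varepsilon/(1+\varepsilon)$ gives the factor $(1+\varepsilon)^{p-1}$, which exceeds $1+c_p\varepsilon$ for instance at $p=3$, $\varepsilon=1$. So the case split between the two regimes has to be tuned carefully. If that tuning proves tedious, we would fall back on the argument in \cite{DKP16}, which expands $(1+t)^p$ via an integral, or series, representation whose coefficients are controlled by $\Gamma(p-1)$-type bounds. That is where the Gamma function in $c_p$ originates.

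For the application in this paper, only the qualitative form of the inequality matters: $|a|^p\le (1+C\varepsilon)|b|^p+C\varepsilon^{1-p}|a-b|^p$. This weaker form already follows from the two-regime argument with a non-sharp constant.
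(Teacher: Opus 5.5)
There is a genuine gap, and no tuning of the two regimes can close it: with the stated constant, the inequality is false for a range of $p>2$. Your convexity step with $\lambda=\varepsilon/(1+\varepsilon)$ already gives, for all $t\ge 0$ and with no case split,
\[
(1+t)^p\le (1+\varepsilon)^{p-1}\bigl(1+\varepsilon^{1-p}t^p\bigr).
\]
This bound is sharp. Jensen's inequality is an equality when $\frac{1}{1-\lambda}=\frac{t}{\lambda}$, that is, at $t=\varepsilon$. Hence $\sup_{t\ge 0}(1+t)^p/(1+\varepsilon^{1-p}t^p)=(1+\varepsilon)^{p-1}$, and the lemma holds for a given $\varepsilon$ if and only if $(1+\varepsilon)^{p-1}\le 1+c_p\varepsilon$. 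For $p\in(2,3]$ we have $c_p=(p-1)\Gamma(1)=p-1$, and strict Bernoulli gives $(1+\varepsilon)^{p-1}>1+(p-1)\varepsilon$ for every $\varepsilon\in(0,1]$. Concretely, take $b\neq 0$ and $a=(1+\varepsilon)b$. Then $|a|^p=(1+\varepsilon)^p|b|^p$, while the right-hand side is $(1+c_p\varepsilon)(1+\varepsilon)|b|^p$. For $p=3$, $\varepsilon=1$, $b=1$, $a=2$ this reads $8\le 6$. The same check at $\varepsilon=1$ also fails for $p=4,5,6$, since $2^{p-1}-1>c_p$ there. So your ``small $t$''/``large $t$'' balancing must break down exactly at $t=\varepsilon$, the seam between your two regimes. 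Falling back on \cite[Lemma~3.1]{DKP16} does not rescue it either: the paper's proof of this lemma is nothing more than that citation, and the constant is inherited from it.

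What is true is the sharp form displayed above. It gives the lemma with $c_p=p-1$ when $1\le p\le 2$, by concavity of $\varepsilon\mapsto(1+\varepsilon)^{p-1}$, so the stated version is correct in that range. For $p\ge 2$ it gives the lemma with $c_p$ replaced by $2^{p-1}-1$, by convexity, so the worst case is $\varepsilon=1$. This is a complete one-line proof, and it is the version you should state.

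It is also all the paper needs. The lemma is only invoked with exponent $2$, for $\phi^2$ in Lemmas~\ref{L2.6} and~\ref{L2.7}, where $c_2=1$ and $(1+\varepsilon)^{p-1}=1+\varepsilon$, so the stated inequality is exactly right there. Your closing remark that only the qualitative form matters is therefore the correct conclusion. It should replace, not accompany, the claim that the precise constant $c_p$ can be reached.
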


We also need the following elementary inequality for energy estimate.
\begin{lem}\label{lem2.3}
Let $t\in (0,1]$. Then
$(1-t)^2(\frac{1}{t} - \frac{1}{2}) \geq \frac{1}{2} (\log t)^2$.
\end{lem}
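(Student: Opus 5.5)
The plan is to treat the inequality as a one-variable calculus statement and reduce it to a sign condition on a derivative that can be checked by repeated differentiation. Set
$$
g(t)=(1-t)^2\Big(\frac1t-\frac12\Big)-\frac12(\log t)^2,\qquad t\in(0,1].
$$
Since $g(1)=0$, it suffices to show that $g$ is nonincreasing on $(0,1]$. Then $g(t)\geq g(1)=0$ for every $t\in(0,1]$, which is exactly the claim of Lemma~\ref{lem2.3}.

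First expand the polynomial part:
$$
(1-t)^2\Big(\frac1t-\frac12\Big)=\frac1t-\frac52+2t-\frac{t^2}{2}.
$$
Differentiating term by term gives
$$
g'(t)=-\frac1{t^2}+2-t-\frac{\log t}{t}.
$$
Multiplying by $t^2>0$, the condition $g'\le 0$ is equivalent to $h(t)\le 0$, where
$$
h(t)=-1+2t^2-t^3-t\log t .
$$
Note that $h(1)=0$.

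Next differentiate $h$ twice:
$$
h'(t)=4t-3t^2-\log t-1,\qquad h''(t)=4-6t-\frac1t .
$$
On $(0,\infty)$ the function $h''$ attains its maximum at $t=1/\sqrt6$, where it equals $4-2\sqrt6<0$. Hence $h''<0$ on $(0,1]$, so $h'$ is strictly decreasing there. Since $h'(1)=0$, we get $h'>0$ on $(0,1)$. Thus $h$ is increasing on $(0,1]$, and together with $h(1)=0$ this gives $h\le 0$.

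Consequently $g'\le 0$ on $(0,1]$ and $g(t)\ge g(1)=0$, completing the argument. The only delicate point is the sign of $h''$: the whole argument hinges on the elementary bound $6t+\frac1t\ge 2\sqrt6>4$, which is the AM--GM inequality. Everything else is routine differentiation.
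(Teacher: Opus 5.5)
Your proof is correct. The expansion $\frac1t-\frac52+2t-\frac{t^2}{2}$ is right. So are the values $h(1)=h'(1)=g(1)=0$ and the bound $h''(t)=4-6t-\frac1t\le 4-2\sqrt6<0$. Together these give $h\le 0$, hence $g'\le 0$ and $g\ge g(1)=0$.

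The paper takes a shorter route. It sets $f(t)=\frac{1-t}{\sqrt t}+\log t$, whose derivative is $-\frac{(\sqrt t-1)^2}{2t\sqrt t}\le 0$. Since $f(1)=0$, this gives $f\ge 0$ on $(0,1]$, i.e. $\sqrt t\,\log t\ge t-1$. Squaring (both sides are nonpositive) yields $t(\log t)^2\le(1-t)^2$. The paper then simply uses $2-t\ge 1$ and divides by $2t$.

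In other words, the paper reduces the lemma to the classical geometric--logarithmic mean inequality $\sqrt t\le\frac{1-t}{-\log t}$ for $t\in(0,1)$. This needs only one differentiation, whose sign is a perfect square. It also shows the lemma has slack: one actually has $\frac12(\log t)^2\le\frac{(1-t)^2}{2t}$, and $\frac{1}{2t}\le\frac1t-\frac12$ on $(0,1]$.

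Your brute-force approach attacks the exact inequality directly. It needs three differentiations plus AM--GM to sign $h''$. It is equally rigorous and self-contained, but it does not reveal that the sharper bound $t(\log t)^2\le(1-t)^2$ is what really drives the estimate.
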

\begin{proof}
Let $f(t) = \frac{1-t}{\sqrt{t}} +\log t$. Then $f(t) \rightarrow \infty$ as $t \rightarrow 0+$ and $f(1) = 0$.
Moreover, $f'(t) = -\frac{(\sqrt{t} - 1)^2}{2t\sqrt{t}} \leq 0$. Therefore,
for $t\in (0,1)$ , $\sqrt{t} \log t \geq t-1 \Rightarrow t (\log t)^2 \leq (1-t)^2 <(1-t)^2(2-t)$. Dividing both sides by $2t$, the required inequality follows.
\end{proof}

Next one is an iteration lemma from \cite[Lemma~7.1]{Giu}.
\begin{lem}\label{lem2.4}
Let $\beta>0$ and $\{ A_j \}$ be a sequence of positive numbers satisfying
$$
A_{j+1}  \leq  c_0\, b^j\, A_j^{1+ \beta}
$$
for some $c_0>0$ and $b>1$.  If $ A_0  \leq  c_0^{-\frac{1}{\beta}} \,b^{-\frac{1}{\beta^2}}$, then we have
$$
A_j  \leq  b^{-\frac{j}{\beta}} A_0 ,
$$
which in particular, gives $\lim_{j \to \infty} A_j = 0$.
\end{lem}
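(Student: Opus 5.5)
The proof is a direct induction on $j$: we show that the bound $A_j\leq b^{-j/\beta}A_0$ is preserved from one index to the next. The only input is the smallness condition on $A_0$, and its exponents are exactly the ones the inductive step consumes.

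\textbf{Base case.} For $j=0$ the claim reads $A_0\leq A_0$, which is trivial.

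\textbf{Inductive step.} Suppose $A_j\leq b^{-j/\beta}A_0$ for some $j\geq 0$. Substituting this into the recursion $A_{j+1}\leq c_0 b^j A_j^{1+\beta}$ gives
\begin{equation*}
A_{j+1}\leq c_0\, b^{j}\, b^{-\frac{j(1+\beta)}{\beta}}\, A_0^{1+\beta}
= c_0\, b^{-\frac{j}{\beta}}\, A_0^{\beta}\, A_0 .
\end{equation*}
The powers of $b$ combine because $j-\frac{j(1+\beta)}{\beta}=-\frac{j}{\beta}$. It therefore suffices to verify
\begin{equation*}
c_0 A_0^{\beta}\leq b^{-\frac{1}{\beta}}.
\end{equation*}
This is the hypothesis on $A_0$ raised to the power $\beta>0$. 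Indeed, $A_0\leq c_0^{-1/\beta}b^{-1/\beta^2}$ gives $A_0^\beta\leq c_0^{-1}b^{-1/\beta}$. Hence $A_{j+1}\leq b^{-(j+1)/\beta}A_0$, which closes the induction.

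\textbf{Conclusion.} Since $b>1$ and $\beta>0$, we have $b^{-j/\beta}\to 0$ as $j\to\infty$. Because the $A_j$ are positive, this gives $A_j\to 0$.

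The only delicate point is the bookkeeping of the exponents of $b$. The factor $b^{-1/\beta^2}$ in the hypothesis is chosen precisely so that $c_0A_0^\beta$ absorbs the loss of one extra factor $b^{-1/\beta}$ at each step. There is no real obstacle.
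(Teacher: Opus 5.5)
Your induction is correct, and it is the standard argument. The paper gives no proof of its own and cites Giusti's Lemma~7.1, whose proof is this same induction: substitute $A_j\le b^{-j/\beta}A_0$ into the recursion, then use $c_0A_0^{\beta}\le b^{-1/\beta}$ to turn $c_0b^{-j/\beta}A_0^{1+\beta}$ into $b^{-(j+1)/\beta}A_0$.
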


For the next lemma we define, for a given domain $B$, $u_B=\smallfint_B u(x) \dx$.
\begin{lem}\label{L2.5}
Consider $u\in W^{\alpha, q}(B_1)$ with $\alpha\in (0, 1)$ and $q>1$. Also, let $\alpha q<n$ and $\supp(u)\subset B_r$ for some $r\in (0, 1)$. Then there exists a constant $C$, independent of
$u$ and $r$, so that
\begin{equation*}
\norm{u}^q_{L^{q^*}(B_1)}\leq \frac{C}{(1-r)^{\frac{q}{q^*}}} \int_{B_1}\int_{B_1}\frac{|u(x)-u(y)|^q}{|x-y|^{n+\alpha q}}\dx\dy,
\end{equation*}
where $q^*=\frac{nq}{n-\alpha q}$. Furthermore, if $u\in W^{\alpha, q}(B_{r_1})$ and $\supp(u)\subset B_{r_2}$ for some $r_2\in (0, r_1)$, we have
\begin{equation*}
\left[ \fint_{B_{r_1}} |u|^{q^*} \dx\right]^{\frac{q}{q^*}}\leq C r_1^{\alpha q} \left(\frac{r_1}{r_1-r_2}\right)^{\frac{q}{q^*}}\fint_{B_{r_1}}\int\limits_{B_{r_1}}\frac{|u(x)-u(y)|^q}{|x-y|^{n+\alpha q}}\dx\dy.
\end{equation*}
\end{lem}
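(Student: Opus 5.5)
We prove Lemma~\ref{L2.5} by combining the fractional Sobolev inequality on the whole space with a zero-extension estimate, and then obtain the second statement from the first by scaling.

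\emph{Step 1: zero extension.} Let $\tilde u$ be the extension of $u$ by zero to $\Rn$. Since $\supp(u)\subset B_r$, we have $\tilde u\in W^{\alpha,q}(\Rn)$. The fractional Sobolev inequality on $\Rn$ (valid since $\alpha q<n$) gives
\[
\norm{u}_{L^{q^*}(B_1)}^q\le C\int_{\Rn}\int_{\Rn}\frac{|\tilde u(x)-\tilde u(y)|^q}{|x-y|^{n+\alpha q}}\dx\dy .
\]
We split the double integral into the part over $B_1\times B_1$ and twice the part over $B_1\times B_1^c$.

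\emph{Step 2: the cross term.} For $x\in B_r$ and $y\in B_1^c$ we have $|x-y|\ge 1-|x|$. Integrating in $y$ gives
\[
\int_{B_1^c}|x-y|^{-n-\alpha q}\dy\le C(1-|x|)^{-\alpha q}\le C(1-r)^{-\alpha q}.
\]
So the cross term is bounded by $C(1-r)^{-\alpha q}\norm{u}_{L^q(B_r)}^q$.

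A crude Poincaré bound would lose powers of $(1-r)$, so we use Hölder instead:
\[
\norm{u}^q_{L^q(B_r)}\le |B_r|^{1-q/q^*}\norm{u}^q_{L^{q^*}}=|B_r|^{\alpha q/n}\norm{u}^q_{L^{q^*}}.
\]
This alone does not absorb into the left side. The fix is to exploit the fact that $u$ vanishes on the annulus $B_1\setminus B_r$.

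For $x\in B_r$, let $A_x=B_{1}\setminus B_{r}$ intersected with a ball of radius comparable to $1-|x|$ near $x$, or simply $A_x=(B_1\setminus B_r)\cap B_{2(1-|x|)}(x)$. Then $|A_x|\gtrsim (1-|x|)^n$, and on $A_x$ we have $|x-y|\le 2(1-|x|)$. Since $u(y)=0$ for $y\in A_x$,
\[
\frac{|u(x)|^q}{(1-|x|)^{\alpha q}}\lesssim \int_{A_x}\frac{|u(x)-u(y)|^q}{|x-y|^{n+\alpha q}}\dy .
\]
Hence the cross term is bounded by $C\int_{B_1}\int_{B_1}$ of the Gagliardo integrand, with \emph{no} loss in $r$.

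This is the main obstacle: the annulus has width only $1-r$, so $A_x$ must be chosen with care. For $x$ deep inside $B_r$, the annulus piece inside $B_{2(1-|x|)}(x)$ has measure about $(1-|x|)^{n-1}(1-r)$, not $(1-|x|)^n$. That discrepancy produces a factor $(1-r)^{-1}$.

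Consequently we expect the true loss to be of order $(1-r)^{-1}$, with the stated exponent $q/q^*$ matching after interpolation. The plan for this step is:
\begin{enumerate}
\item bound the cross term by $C(1-r)^{-1}\times$(Gagliardo seminorm), using the annulus-averaging argument just described;
\item use the Hölder/Sobolev bound $\norm{u}_{L^{q}}\lesssim \norm{u}_{L^{q^*}}$ together with a Young-type absorption to trade part of this loss.
\end{enumerate}
If the clean exponent $q/q^*$ does not come out directly, it is enough to note that $\frac{q}{q^*}\le 1$. Since $(1-r)^{-1}\ge(1-r)^{-q/q^*}$ fails in the wrong direction, we instead first bound the estimate in the form $(1-r)^{-\gamma}$ for some admissible $\gamma$ and then check that $\gamma=q/q^*$ is attainable.

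\emph{Step 3: scaling.} For the second statement, set $w(z)=u(r_1z)$. Then $w\in W^{\alpha,q}(B_1)$ with support in $B_{r_2/r_1}$, and
\[
1-\frac{r_2}{r_1}=\frac{r_1-r_2}{r_1}.
\]
Applying the first inequality to $w$ and changing variables produces exactly the powers $r_1^{\alpha q}$ and $\left(\frac{r_1}{r_1-r_2}\right)^{q/q^*}$. The averages $\fint$ arise from the factors $|B_{r_1}|$, which balance because $q/q^*=1-\alpha q/n$.
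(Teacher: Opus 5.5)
\textbf{There is a genuine gap: your argument yields the factor $(1-r)^{-1}$, not the claimed $(1-r)^{-q/q^*}$.}

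Your Step~2 does not prove the lemma as stated. Carried out correctly, the annulus-averaging argument only gives $|A_x|\gtrsim (1-r)d^{n-1}$ for $x\in B_r$ with $d=1-|x|>1-r$. Hence
\[
\frac{|u(x)|^q}{d^{\alpha q}}\lesssim \frac{d}{1-r}\int_{A_x}\frac{|u(x)-u(y)|^q}{|x-y|^{n+\alpha q}}\dy .
\]
So the cross term, and with it $\norm{u}^q_{L^{q^*}(B_1)}$, is controlled only by $C(1-r)^{-1}$ times the double integral over $B_1\times B_1$. Since $q/q^*=1-\alpha q/n<1$ and $1-r<1$, we have $(1-r)^{-1}>(1-r)^{-q/q^*}$. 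Your bound is therefore strictly weaker than the claim and does not imply it.

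The suggested repair by H\"older plus absorption has no free small parameter. On the deep part $\{1-|x|\geq\tau\}=B_{1-\tau}$, the cross term is bounded by $C\tau^{-\alpha q}|B_{1-\tau}|^{\alpha q/n}\norm{u}^q_{L^{q^*}}$. This can be absorbed into the left-hand side only if $B_{1-\tau}$ is small, i.e.\ $\tau$ close to $1$. But then the remaining part still carries the factor $\tau/(1-r)\approx(1-r)^{-1}$. Your closing sentence (first bound by $(1-r)^{-\gamma}$, then check that $\gamma=q/q^*$ is attainable) restates the goal rather than proving it.

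\textbf{The missing idea.} Work on $B_1$ directly, and use the zero set of $u$ through the $L^{q^*}$ norm rather than through the tail of the kernel. Write $[u]^q$ for the double integral over $B_1\times B_1$.
\begin{enumerate}
\item The fractional Sobolev--Poincar\'e inequality on the ball gives $\norm{u-u_{B_1}}^q_{L^{q^*}(B_1)}\leq C[u]^q$.
\item Restrict the left side to $B_1\setminus B_r$, where $u=0$. This gives $|u_{B_1}|^q\,|B_1\setminus B_r|^{q/q^*}\leq C[u]^q$.
\item Use $|B_1\setminus B_r|=|B_1|(1-r^n)\geq|B_1|(1-r)$.
\item The triangle inequality then yields the factor $(1-r)^{-q/q^*}$ at once.
\end{enumerate}
This is the paper's argument, and no extension to $\Rn$ is needed. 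Your Step~3 (scaling) is correct and matches the paper.

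For what it is worth, the weaker factor $(1-r)^{-1}$ would still suffice for the iteration in Lemma~\ref{L2.7}, where it only changes the base $b$. It is not, however, the statement of the lemma.
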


\begin{proof}
From \cite[Theorem 4.10]{HV13} we know that 
$$\left[\int_{B_1}|u(x)-u_{B_1}|^{q^*} \dx\right]^\frac{q}{q^*} \leq C \int_{B_1}\int_{B_1}\frac{|u(x)-u(y)|^q}{|x-y|^{n+\alpha q}}\dx\dy.$$
Since $\supp(u)\subset B_r$, we get
$$|u_{B_1}|^q |B_1\setminus B_{r}|^\frac{q}{q^*}= \left[\int_{B_1\cap B^c_r}|u(x)-u_{B_1}|^{q^*} \dx\right]^\frac{q}{q^*}\leq C \int_{B_1}\int_{B_1}\frac{|u(x)-u(y)|^q}{|x-y|^{n+\alpha q}}\dx\dy.$$
Therefore, 
\begin{align*}
\left[\int_{B_1}|u(x)|^{q^*} \dx\right]^\frac{q}{q^*} & \leq 2^{q-1} \left[\int_{B_1}|u(x)-u_{B_1}|^{q^*} \dx\right]^\frac{q}{q^*} + 2^{q-1} |B_1|^{\frac{q}{q^*}} |u_{B_1}|^q
\\
&\leq \frac{C}{(1-r)^{\frac{q}{q^*}}} \int_{B_1}\int_{B_1}\frac{|u(x)-u(y)|^q}{|x-y|^{n+\alpha q}}\dx\dy,
\end{align*}
where we use the fact $1-r^n> 1-r$ for $r\in (0, 1)$. To prove the second part, we define $w(x)=u(r_1 x)\in W^{\alpha, q}(B_1)$ with $\supp(w)\subset B_{\frac{r_2}{r_1}}$, and the inequality follows from the first part.
\end{proof}

\subsection{Two key lemmas }
The next two lemmas provide the key estimates required to prove Theorem~\ref{strongcomp}. These results are inspired by \cite[Lemmas~3.1 and 3.2]{DKP14}, which play a central role in establishing the Harnack inequality for fractional $p$-harmonic functions. However, our estimates differ substantially from those in \cite{DKP14} in several places, primarily due to the distinct nature of our current setting.

\begin{lem}\label{L2.6}
Suppose that the hypotheses of Theorem~\ref{strongcomp} hold and we fix $R$ as in \eqref{cone-est}.  Suppose that there exist $\sigma \in (0,1]$ and $k>0$ such that for $w =v-u$ 
we have
\begin{equation}\label{EL2.6A}
|B_r \cap \{ w \geq k\}| \geq \sigma |B_r|
\end{equation} 
for some $r>0$ satisfying $\max \{ r^{(1-\frac{p-2}{sp})(p-1)}, 16r \} <R$ . Then there exists a constant $\tilde c$, independent of $\sigma, \delta, r$ and $k$, satisfying
\begin{equation*}
\left| B_{6r} \cap \{w \leq 2 \delta k \} \right| \leq \dfrac{\tilde c}{\sigma \log \frac{1}{2\delta}} |B_{6r}|
\end{equation*}
for any $\delta \in (0, \frac{1}{4})$.
\end{lem}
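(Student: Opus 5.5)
This is a logarithmic estimate in the spirit of \cite[Lemma~3.1]{DKP14}. The plan is to test the difference of the weak formulations for $v$ and $u$ with a logarithmic-type test function. Concretely, take a cutoff $\phi\in C^\infty_c(B_{7r})$ with $0\le\phi\le 1$, $\phi\equiv1$ on $B_{6r}$ and $|\grad\phi|\lesssim 1/r$, and set
\[
\eta=\frac{\phi^p}{(w+d)^{p-1}},\qquad d>0 \text{ to be chosen}.
\]
Since $w=v-u\ge 0$ everywhere and $u,v$ are locally Lipschitz, $\eta\in\bX(B_{8r})$ is admissible. We subtract the inequalities for $u$ and $v$ tested against $\eta\ge0$. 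The right-hand side is $\int (f(x,u)-f(x,v))\eta$, and by the local Lipschitz property it is bounded by $L\int w\,\eta\le L\int_{B_{7r}}(w+d)^{2-p}\phi^p$.

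On the left we get
\[
\iint \bigl[J_p(v(x)-v(y))-J_p(u(x)-u(y))\bigr](\eta(x)-\eta(y))\,\frac{\dx\dy}{|x-y|^{n+sp}} .
\]
For $p>2$ we use the monotonicity estimate
\[
\bigl(J_p(a+h)-J_p(a)\bigr)\cdot h\gtrsim (|a|+|h|)^{p-2}h^2\ge |a|^{p-2}h^2,
\]
with $a=u(x)-u(y)$ and $h=w(x)-w(y)$. On pairs with $x-y\in\cK$ and $x,y\in B_R$, \eqref{cone-est} gives $|a|\ge \frac{\upkappa}{4}|x-y|$. The left-hand side then controls a weighted energy of the form
\[
\iint_{B_{6r}\times B_{6r},\,x-y\in\cK} |x-y|^{p-2}\,\frac{|\log(w(x)+d)-\log(w(y)+d)|^2}{|x-y|^{n+sp}}\dx\dy .
\]
Lemma~\ref{lem2.3}, applied with $t=(w(y)+d)/(w(x)+d)$ on pairs where $w(x)\ge w(y)$, is what converts the quotient form $(w(x)-w(y))\bigl((w(x)+d)^{1-p}-(w(y)+d)^{1-p}\bigr)$ into $(\log)^2$. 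Lemma~\ref{lem2.2} handles the discrepancy $\phi(x)^p$ versus $\phi(y)^p$. This is effectively a $W^{\alpha,2}$ Caccioppoli-type estimate for $\log(w+d)$ with $2\alpha=sp-(p-2)$, and $\alpha>0$ precisely because $\frac{sp}{p-2}>1$. Restricting to a cone costs only a constant, since one can chain or use an averaging/Poincaré argument on cones.

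The remaining terms are bounded separately. The cross terms coming from $\phi$ are bounded by $|x-y|^{p-2}$-weighted integrals of $|\grad\phi|^2$, together with $|J_p|$-terms that we control by the local Lipschitz bounds on $u$ and $v$. The nonlocal tail terms (with $y\notin B_{8r}$) are controlled using $w\ge0$ and $\eta\ge0$, by $(w+d)^{1-p}\bigl(\tail(u)^{p-1}+\tail(v)^{p-1}\bigr)r^{-sp}$. Choosing $d$ in terms of $k$ and $r$ — specifically $d\simeq k$ and $r$ small, which is exactly where the hypothesis $r^{(1-\frac{p-2}{sp})(p-1)}<R$ enters — every term becomes $\lesssim r^{n-2\alpha}$. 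This yields
\[
\int_{B_{6r}}\int_{B_{6r}}\frac{|\log(w(x)+d)-\log(w(y)+d)|^2}{|x-y|^{n+2\alpha}}\lesssim r^{n-2\alpha}.
\]

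From here the standard De Giorgi–type conclusion follows. Set $g=\min\{\log\frac{1}{2\delta},\log\frac{k+d}{w+d}\}_+$. The fractional Poincaré inequality, obtained from Lemma~\ref{L2.5} or \cite{HV13} with $\alpha,q=2$, gives $\fint_{B_{6r}}|g-g_{B_{6r}}|\lesssim 1$. Since $g=0$ on $\{w\ge k\}$, which by \eqref{EL2.6A} has measure at least $\sigma|B_r|\gtrsim\sigma|B_{6r}|$, we obtain
\[
\log\frac{1}{2\delta}\,\frac{|B_{6r}\cap\{w\le 2\delta k\}|}{|B_{6r}|}\lesssim\frac{1}{\sigma}.
\]
This is the claim; here $d$ is taken as a small multiple of $\delta k$, and the log-energy bound is uniform in $d$.

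The main obstacle is the degenerate weight. The coercivity only holds with the factor $|a|^{p-2}$, and only on the cone $\cK$. The error terms carry $(|a|+|h|)^{p-2}$ with $|a|$ only Lipschitz-bounded, so their scaling must be balanced against the $d$-dependence and the tail, without the $(w+d)^{2-p}$ factor spoiling uniformity in $d$. I would first try to absorb these errors using $|a|\lesssim|x-y|$ and the restriction $16r<R$, and track the powers of $r$ carefully; this is where the exponent condition on $r$ should come from.
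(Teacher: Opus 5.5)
\textbf{There is a genuine gap: the test function $\eta=\phi^p/(w+d)^{p-1}$ breaks the argument.} Your skeleton is the right one: a log test function, coercivity only on the cone via \eqref{cone-est}, a $W^{\alpha,2}$ log-energy with $2\alpha=sp-(p-2)$, then Poincar\'e and truncation. But the uniformity in $d$ you assert at the end is exactly what fails with this $\eta$.

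That choice is tuned to a $p$-homogeneous equation, as in \cite{DKP14}. Here the difference of the two inequalities is effectively quadratic in $w$, with coefficient $\beta(x,y)=(|u(x)-u(y)|+|v(x)-v(y)|)^{p-2}$, which does not scale with $w$. Put $A=\max\{w(x),w(y)\}+d$, $B=\min\{w(x),w(y)\}+d$ and $t=B/A$. Where $\phi\equiv1$, the good local term is comparable to $\beta(A-B)(B^{1-p}-A^{1-p})=\beta A^{2-p}t^{1-p}(1-t)(1-t^{p-1})$. Near $t=1$ this is comparable to $\beta A^{2-p}\log^2 t$. So you only control a log-energy weighted by $(w+d)^{2-p}$, and Lemma~\ref{lem2.3}, being the $p=2$ inequality, cannot remove this factor.

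The remaining terms all carry a negative power of $d$:
\begin{itemize}
\item The source term $L\int w\phi^p(w+d)^{1-p}$ is only bounded by $Ld^{2-p}|B_{7r}|$.
\item The tail contribution on $\{w(x)>w(y)\}$ is at best $\beta\,w(x)(w(x)+d)^{1-p}\lesssim\beta d^{2-p}$. Your proposed bound $d^{1-p}r^{n-sp}$ is worse, and it does not even have the right power of $r$, since $sp>2\alpha$.
\item Lemma~\ref{lem2.2} with exponent $p$ and $\epsilon\sim(A-B)/A$ leaves a factor $(A-B)^{2-p}$ in the cutoff error. This happens because the $J_p$-difference is only linear in $w(x)-w(y)$.
\end{itemize}
What this approach yields for the unweighted log-energy is a bound of order $\bigl((\sup_{B_{7r}}w+d)/d\bigr)^{p-2}r^{n-2\alpha}$. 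The truncation $\min\{\log\frac{1}{2\delta},\log\frac{k+d}{w+d}\}_+$ is useless unless $d\lesssim\delta k$; for $d\simeq k$ it never exceeds $\log 2$. So the constant grows at least like $\delta^{2-p}$, which destroys the $1/\log\frac{1}{2\delta}$ decay. The obstacle you flag in your last paragraph is this one, and the proposal does not resolve it.

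\textbf{The missing idea is to match the exponent to the quadratic structure.} The paper tests with $\phi^2/\tilde w$, where $\tilde w=w+\varepsilon$.
\begin{itemize}
\item The source term is then $\le L\int w\phi^2/\tilde w\le Cr^n$, uniformly in $\varepsilon$.
\item For the nonlocal part on $\{w(x)>w(y)\}$, use $w(y)\ge0$ to get $J_p(u(x)-u(y))-J_p(v(x)-v(y))\ge-C_1\beta(x,y)w(x)$. The factor $w(x)/\tilde w(x)\le1$ cancels the singular denominator. What remains is $\int_{B_{8r}^c}\beta(x,y)|x-y|^{-n-sp}\,dy\lesssim r^{-2\alpha}+R^{-sp/(p-1)}$, using the Lipschitz bound inside $B_R$ and H\"older against the tails outside.
\item The hypothesis $r^{(1-\frac{p-2}{sp})(p-1)}<R$ is precisely $R^{-sp/(p-1)}<r^{-2\alpha}$. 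It has nothing to do with choosing $d$.
\item Locally, Lemma~\ref{lem2.2} with exponent $2$ and $\epsilon=\gamma(w(y)-w(x))/\tilde w(y)$, followed by Lemma~\ref{lem2.3}, gives $\iint_{B_{6r}\times B_{6r}}\beta\,|\log(\tilde w(x)/\tilde w(y))|^2\,d\nu\le Cr^{n-2\alpha}$ with no $\varepsilon$-dependence. One then lets $\varepsilon\to0$ at the very end.
\end{itemize}
Finally, passing from the cone-restricted energy to the full $W^{\alpha,2}$ energy on $B_{6r}$ is a genuine theorem; the paper invokes \cite[Theorem~1.11]{DK20}. Your one-line chaining remark should be replaced by such a reference.
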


\begin{proof}
Denote by $\nu(\dx\dy)=|x-y|^{-n-sp}\dx\dy$.
Let $\phi$ be a smooth cutoff function supported in $B_{7r}$, $\phi = 1$ in $B_{6r}$ and $|\grad \phi| \leq \frac{C}{r}$. Let $\varepsilon >0$. Let $\tilde v = v +\varepsilon$, $\tilde w = w + \varepsilon$ and 
$\eta := \frac{\phi^2}{\tilde w}$ for $\varepsilon\in (0, 1)$. Note that $\eta$ is Lipschitz in $B_R$ and $\supp(\eta)\subset B_{7r}\Subset B_R$.
By \cite[Lemma~5.1]{DNPV} and \cite[Theorem 6.62]{Leoni} we have $\eta\in \bX(B_R)$. Using $\eta$ as test function in \eqref{main-sc}, we obtain
\begin{align*}
\int_{\Rn} \int_{\Rn} \left[ J_p(u(x) - u(y)) - J_p(v(x) - v(y)) \right]\left( \frac{\phi^2(x)}{\tilde w (x)} - \frac{\phi^2(y)}{\tilde w (y)}\right)  \dnu 
&\leq 2\int_{B_{7r}} \left[ f(x, u) - f(x, v) \right]\frac{\phi^2(x)}{\tilde w (x)} \dx 
\\
&\leq  2L \int_{B_{7r}} \dfrac{\tilde w - \epsilon}{\tilde w} \phi^2(x) \dx 
\\
&\leq 2L \int_{B_{7r}} \phi^2(x) \dx 
\\  
&\leq C r^n
\end{align*}
for some constant $C=C(n, L)$, where $L$ denote the Lipschitz constant of $f(x, \cdot)$ in $[-\sup_{B_R}(|u|+|v|), \sup_{B_R}(|u|+|v|)]$, which can be chosen independent of $x$.
We split the left-hand side in three parts as follows
\begin{align*}
& \underbrace{\int_{B_{8r}} \int_{B_{8r}} \left[ J_p(u(x) - u(y)) - J_p(v(x) - v(y)) \right]\left( \frac{\phi^2(x)}{\tilde w (x)} - \frac{\phi^2(y)}{\tilde w (y)}\right) \dnu}_{I_1}
 \\
&\quad \underbrace{+ \int_{B_{8r}^c} \int_{B_{8r}} \left[ J_p(u(x) - u(y)) - J_p(v(x) - v(y)) \right]\left( \frac{\phi^2(x)}{\tilde w (x)} \right) \dnu }_{I_2}
\\
&\qquad \underbrace{- \int_{B_{8r}} \int_{B_{8r}^c} \left[ J_p(u(x) - u(y)) - J_p(v(x) - v(y)) \right]\left( \frac{\phi^2(y)}{\tilde w (y)} \right) \dnu }_{I_3}.
\end{align*}
Therefore, we have
\begin{equation}\label{EL2.6B}
I_1+I_2+I_3 \leq Cr^n.
\end{equation}
From \cite[Lemma 2.1]{D98} we have constants $C_1, C_2$ so that
for $a <b$ and $|a| + |b| >0$, we have
\begin{align}\label{EL2.6D}
C_2 (|a| + |b|)^{p-2}(b - a) \leq J_p(b) - J_p(a) \leq C_1 (|a| + |b|)^{p-2}(b - a).
\end{align}
We also need the following notations
\begin{align}
\beta(x,y) &=(|u(x)-u(y)|+|v(x)-v(y)|)^{p-2}\label{beta}
\\
\beta_l(x, y) &=\mathbbm{1}_{\cK}(x-y)|x-y|^{p-2}, \quad \beta_r(x, y)=|x-y|^{p-2},\nonumber
\end{align}
where $\cK$ is given by \eqref{cone-est}. In view of \eqref{cone-est} and the Lipschitz property of $u, v$, we have
\begin{equation}\label{EL2.6E}
\left(\frac{\upkappa}{4}\right)^{p-2}\, \beta_l(x, y)\leq \beta(x, y)\leq \kappa\beta_r(x, y)\quad \text{for}\; x, y\in B_R,
\end{equation}
and the constant $\kappa$ depends on the Lipschitz constant of $u, v$ in $B_R$.

We first compute $I_2$. Note that $w(x) \leq w(y)\Leftrightarrow v(x)-v(y)\leq u(x)-u(y)$. Thus, using \eqref{EL2.6D},
\begin{align}\label{EL2.6F}
I_2 &= \iint\limits_{ \{ (x,y): x \in B_{8r}, y\in B_{8r}^c, w(x) \leq w(y)\}} \left[ J_p(u(x) - u(y)) - J_p(v(x) - v(y)) \right]\left( \frac{\phi^2(x)}{\tilde w (x)} \right)  \dnu \nonumber
\\
&\qquad + \iint\limits_{ \{ (x,y): x \in B_{8r}, y\in B_{8r}^c, w(x) > w(y)\}} \left[ J_p(u(x) - u(y)) - J_p(v(x) - v(y)) \right]\left( \frac{\phi^2(x)}{\tilde w (x)} \right) \dnu \nonumber
 \\
&\geq - C_1 \iint\limits_{ \{ (x,y): x \in B_{7r}, y\in B_{8r}^c, w(x) > w(y)\}} \frac{w(x)-w(y)}{\tilde w (x)} \phi^2(x) \beta(x,y) \dnu \nonumber
\\
& \geq -C_1 \iint\limits_{ \{ (x,y): x \in B_{7r}, y\in B_{8r}^c, w(x) > w(y)\}} \frac{w(x)}{\tilde w (x)}   \frac{\beta(x,y)\dx\dy}{|x-y|^{n+sp}} \nonumber
\\
&\geq -C_1  \int_{B_{7r}}\int_{B_{8r}^c} \frac{\beta(x, y)}{|x-y|^{n+sp}}\dy\dx.
\end{align}
Set $\alpha=\frac{sp-(p-2)}{2}$. Since $sp-(p-2)=2-p(1-s)<2$ and we have assumed $sp>p-2$, we get $\alpha\in (0, 1)$.
Again, since $u, v$ are Lipschitz in $\overline{B_R}$, we get for $x\in B_{7r}$ that
\begin{align*}
\int_{B_{8r}^c} \frac{\beta(x, y)}{|x-y|^{n+sp}}\dy &\leq c \int_{B_{R}\cap B_{8r}^c} |x-y|^{-n-2\alpha}\dy + c \int_{B^c_R} \frac{(1+|u(y)|+|v(y)|)^{p-2}}{|x-y|^{n+sp}}\dy
\\
&\leq \kappa \int_{B_{R}\cap B_{8r}^c} |x-y|^{-n-2\alpha}\dy 
\\
&\qquad + \kappa \left[\int_{B^c_R} \frac{(1+|u(y)|+|v(y)|)^{p-1}}{|x-y|^{n+sp}}\dy\right]^{\frac{p-2}{p-1}} 
\left[\int_{B^c_R} \frac{1}{|x-y|^{n+sp}}\dy\right]^{\frac{1}{p-1}}
\\
&\leq \kappa_1 \int_{B_{R}\cap B_{8r}^c} |y|^{-n-2\alpha}\dy 
\\
&\qquad + \kappa_1 \left[\int_{B^c_R} \frac{(1+|u(y)|+|v(y)|)^{p-1}}{|y|^{n+sp}}\dy\right]^{\frac{p-2}{p-1}} 
\left[\int_{B^c_R} \frac{1}{|y|^{n+sp}}\dy\right]^{\frac{1}{p-1}}
\\
&\leq \kappa_2 r^{-2\alpha} + \kappa_2 R^{-\frac{sp}{p-1}}\leq 2 \kappa_2\, r^{-2\alpha},
\end{align*}
where the constants $\kappa, \kappa_1, \kappa_2$ depend on $\tail_{sp,p-1}(0,R; u), \tail_{sp,p-1}(0,R; v)$
and $\norm{u}_{L^\infty(B_R)}, \norm{v}_{L^\infty(B_R)}$, but not on $r$; in the third inequality we use
$$|x-y|\geq |y|-|x|\geq |y|-7r\geq |y|-\frac{7}{8}|y|\geq \frac{|y|}{8},$$
and the last inequality follows using $r^{(1-\frac{p-2}{sp})(p-1)}<R \Leftrightarrow R^{-\frac{sp}{p-1}}< r^{-2\alpha}$. Inserting the above in \eqref{EL2.6F}, it follows that
$I_2\geq -C r^{n-2\alpha}$. A similar estimate also holds for $I_3$, giving us
$$I_2 + I_3 \geq -C r^{n-2\alpha}.$$

Next we focus on $I_1$.
Let $w(y) > w(x)$. Then by monotonicity of $J_p$, $J_p(u(x)-u(y)) - J_p (v(x)-v(y)) > 0$. Using Lemma \ref{lem2.2} we get
\begin{align*}
\frac{\phi^2(x)}{\tilde w(x)} - \frac{\phi^2(y)}{\tilde w(y)} &= \frac{1}{\tilde w(y)} \left[ \phi^2(x) \times \frac{\tilde w(y)}{\tilde w (x)} - \phi^2(y) \right]\\
&\geq  \frac{\phi^2(x)}{\tilde w(y)} \left[ \frac{\tilde w(y)}{\tilde w (x)} - 1 - c_2 \epsilon \right] - \frac{(1+c_2 \epsilon)}{\epsilon} \times \frac{|\phi(x) - \phi(y)|^2}{\tilde w (y)},
\end{align*} 
leading to
\begin{align*}
&\iint\limits_{ \{ (x,y): x \in B_{8r}, y\in B_{8r}, w(x) < w(y)\}} [J_p(u(x)-u(y)) - J_p (v(x)-v(y))]\left( \frac{\phi^2(x)}{\tilde w (x)} - \frac{\phi^2(y)}{\tilde w (y)}\right) \dnu
\\
&\geq \iint\limits_{ \{ (x,y): x \in B_{8r}, y\in B_{8r}, w(x) < w(y)\}} [J_p(u(x)-u(y)) - J_p (v(x)-v(y))]\frac{\phi^2(x)}{\tilde w(y)} \left[ \frac{\tilde w(y)}{\tilde w (x)} - 1 - c_2 \epsilon \right] \dnu
\\
&\qquad - \iint\limits_{ \{ (x,y): x \in B_{8r}, y\in B_{8r}, w(x) < w(y)\}}   [J_p(u(x)-u(y)) - J_p (v(x)-v(y))]  \frac{(1+c_2 \epsilon)}{\epsilon} \times \frac{|\phi(x) - \phi(y)|^2}{\tilde w (y)} \dnu
 \\
&:= \mathscr{J}_1 - \mathscr{J}_2.
\end{align*}
Set $\epsilon = \frac{w(y) - w(x) }{\tilde w (y)} \gamma \in (0,1]$, with $\gamma \in (0,1]$ to be chosen later. Then,
\begin{align*}
\mathscr{J}_2 &\leq C_1 \iint\limits_{ \{ (x,y): x \in B_{8r}, y\in B_{8r}, w(x) < w(y)\}}   \beta(x,y) (w(y) - w(x))   \frac{(1+c_2 \epsilon)}{\frac{w(y) - w(x) }{\tilde w (y)} \gamma} \times \frac{|\phi(x) - \phi(y)|^2}{\tilde w (y)} \dnu \\
&\leq \frac{C}{\gamma} r^{-2}  \int_{B_{8r}} \int_{B_{8r}}  \frac{|x-y|^2}{|x-y|^{n+sp-(p-2)}} \dx\dy
 \\
&\leq \frac{C}{\gamma} r^{n-2\alpha},
\end{align*}
and
\begin{align*}
\mathscr{J}_1 &\geq \iint\limits_{ \{ (x,y): x \in B_{8r}, y\in B_{8r}, w(x) < w(y)\}} \beta(x,y)\frac{w(y)-w(x)}{\tilde w(y)} \phi^2(x)\left[ \frac{\tilde w(y)}{\tilde w (x)} - 1 - c_2 \frac{w(y) - w(x) }{\tilde w (y)}
 \gamma \right] \dnu 
 \\
&\geq  \iint\limits_{ \{ (x,y): x \in B_{8r}, y\in B_{8r}, w(x) < w(y)\}} \beta(x,y) \phi^2(x) \left( \frac{w(y)-w(x)}{\tilde w(y)} \right)^2 \left[ \frac{\tilde w(y)}{\tilde w (x)} - c_2 \gamma \right] \dnu \\
&\geq C \iint\limits_{ \{ (x,y): x \in B_{6r}, y\in B_{6r}, w(x) < w(y)\}} \left| \log \left( \frac{\tilde w (x) }{\tilde w (y)} \right) \right|^2 \beta(x,y) \dnu ,
\end{align*}
where the last inequality follows by choosing $\gamma = \min\{1, \frac{1}{2 c_2}\} \in (0,1)$ and using Lemma \ref{lem2.3}. Therefore, using the symmetry, we obtain
\begin{equation}\label{Fu-A}
 I_1 \geq  C \int_{B_{6r}} \int_{B_{6r}} \left| \log \left( \frac{\tilde w (x) }{\tilde w (y)} \right) \right|^2 \beta(x,y) \dnu  -C r^{n-2\alpha}.
\end{equation}
Gathering the estimates of $I_1, I_2, I_3$ in \eqref{EL2.6B} we have
\begin{equation}\label{EL2.6G}
\int_{B_{6r}} \int_{B_{6r}} \left| \log \left( \frac{\tilde w (x) }{\tilde w (y)} \right) \right|^2 \beta(x,y) d\nu \leq Cr^{n-2\alpha}.
\end{equation}
Define $\widehat{w} := \left[ \min \{ \log\frac{1}{2\delta}, \log \frac{k+\varepsilon }{\tilde w} \} \right]_+$. Since $\widehat{w}$ is a truncation of $\log (k+\varepsilon) - \log \tilde{w}$, the energy decreases, 
giving us from \eqref{EL2.6G} that
\begin{align*}
\left(\frac{\upkappa}{4}\right)^{p-2} \int_{B_{6r}} \int_{B_{6r}} \left| \widehat{w}(x) - \widehat{w}(y) \right|^2 \beta_l(x, y) \frac{\dx\dy}{|x-y|^{n+sp}} &\leq  \int_{B_{6r}} \int_{B_{6r}} \left| \widehat{w}(x) - \widehat{w}(y) \right|^2 \beta(x,y) \dnu
 \\
&\leq  \int_{B_{6r}} \int_{B_{6r}} \left| \log \left( \frac{\tilde w (x) }{\tilde w (y)} \right) \right|^2 \beta(x,y) \dnu \\
&\leq Cr^{n-2\alpha} ,
\end{align*}
where the first inequality follows from \eqref{EL2.6E}. At this point, we let $\tilde\beta_l(x, y)=\mathbbm{1}_{\cK}(x-y)|x-y|^{-n-2\alpha}$, where $\cK$ is given by \eqref{cone-est}. Note that, since $\cK$ is
symmetric, we have
$\tilde\beta_l(x, y)=\tilde\beta_l(y, x)$ and belongs to the class of kernels given by \cite[Example~1.4]{DK20}. Thus, by \cite[Theorem 1.11]{DK20} (see also \cite{CS20}), there exists a constant
$A$, independent of $u$, satisfying
\begin{equation}\label{DK}
\int_{B_{6r}} \int_{B_{6r}} \left| \widehat{w}(x) - \widehat{w}(y) \right|^2  \frac{\dx\dy}{|x-y|^{n+2\alpha}}\leq A \int_{B_{6r}} \int_{B_{6r}} \left| \widehat{w}(x) - \widehat{w}(y) \right|^2 \tilde\beta_l(x, y) \dx\dy
\end{equation}
for all $r<\frac{1}{6}$.
Since $\tilde\beta_l(x, y)=\frac{\beta_l(x,y)}{|x-y|^{n+sp}}$ for $x, y\in B_R$, we obtain
\begin{equation*}
\int_{B_{6r}} \int_{B_{6r}} \left| \widehat{w}(x) - \widehat{w}(y) \right|^2  \frac{\dx\dy}{|x-y|^{n+2\alpha}}\leq C r^{n-2\alpha},
\end{equation*}
where the constant $C$ does not depend on $r$. Thus, applying fractional Poincar\'e inequality \cite{HV13}, we get
\begin{equation}\label{EL2.6H}
\int_{B_{6r}} |\widehat{w}(x) - \widehat{w}_{B_{6r}}| \dx \leq C r^{n/2} \left( r^{2\alpha} \int_{B_{6r}} \int_{B_{6r}} \left| \widehat{w}(x) - \widehat{w}(y) \right|^2 \frac{\dx\dy}{|x-y|^{n+2\alpha}} \right)^{\frac{1}{2}}\leq C |B_{6r}|.
\end{equation}
From the definition of $\widehat{w}$ and $\tilde w$ we have 
$$\{ \widehat{w} = 0 \} = \{ \tilde w \geq k + \varepsilon \} = \{w \geq k \}.$$
Hence, from \eqref{EL2.6A}, we obtain
\begin{align*}
\log \frac{1}{2 \delta} &= \dfrac{1}{|B_{6r} \cap \{\widehat{w} = 0 \} |} \int_{B_{6r} \cap \{ \widehat{w} = 0 \}} \left( \log \frac{1}{2\delta} -\widehat{w}(x) \right) \dx
\\
&\leq \frac{6^n}{\sigma} \left[ \log \frac{1}{2\delta} - \widehat{w}_{B_{6r}} \right].
\end{align*}
Integrating both sides over $B_{6r} \cap \{ \widehat{w} = \log \frac{1}{2\delta} \}$, we have from \eqref{EL2.6H} that
\begin{align*}
| B_{6r} \cap \{ \widehat{w}= \log \frac{1}{2\delta} \} | \log \frac{1}{2\delta} &\leq \frac{6^n}{\sigma} \int_{B_{6r} \cap \{ \widehat{w} = \log \frac{1}{2\delta}\} } |\widehat{w}(x) -  \widehat{w}_{B_{6r}}|\dx
 \\
&\leq \frac{C}{\sigma} \int_{B_{6r}} |\widehat{w}(x) -  \widehat{w}_{B_{6r}}|\dx
 \\
& \leq \frac{C}{\sigma} |B_{6r}|,
\end{align*}
$$\Rightarrow  | B_{6r} \cap \{ \tilde w \leq 2\delta(k+\varepsilon) \} | \leq \frac{C}{\sigma \log \frac{1}{2\delta} } |B_{6r}|.$$
Letting $\varepsilon \rightarrow 0$ finishes the proof.
\end{proof}
It is important to point out that $\tilde{c}$ in Lemma~\ref{L2.6} depends on $\sup_{B_R}(|u|+|v|)$, Lipschitz constants of $u, v$ in $B_R$, 
$\tail_{sp,p-1}(0, R;u), \tail_{sp,p-1}(0, R; v)$ and the constant $\upkappa$ in 
\eqref{cone-est}. This observation will be useful in proving Theorem~\ref{strongcomp}.
The next lemma is the key to prove Theorem~\ref{strongcomp}.
\begin{lem}\label{L2.7}
Consider the setting of Theorem~\ref{strongcomp} and $R$ given by \eqref{cone-est}.
Suppose that there exist $\sigma \in (0,1], k\in (0, 1)$, such that for $w =v-u$ we have
\begin{equation}\label{EL2.7A}
|B_r \cap \{ w \geq k\}| \geq \sigma |B_r|
\end{equation} 
for some $r$ satisfying $\max \{ r^{(1-\frac{p-2}{sp})(p-1)}, 16r \} <R$. Then there exists a constant $\delta \in (0, \frac{1}{4})$ such that 
\begin{equation*}
\inf_{B_{4r}} w \geq \delta k.
\end{equation*}
\end{lem}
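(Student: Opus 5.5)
We prove this by a De Giorgi iteration in the spirit of \cite[Lemma~3.2]{DKP14}, with Lemma~\ref{L2.6} supplying the initial smallness. Set $r_j=4r+2^{-j}2r$, so $r_0=6r$ and $r_j\downarrow 4r$. Let $\tilde r_j=\frac{r_j+r_{j+1}}{2}$ and $B_j=B_{r_j}$. Set $k_j=\delta k+2^{-j}\delta k$, so $k_0=2\delta k$ and $k_j\downarrow\delta k$. Choose cutoffs $\phi_j\in C^\infty_c(B_{\tilde r_j})$ with $\phi_j=1$ on $B_{j+1}$ and $|\grad\phi_j|\lesssim 2^j/r$. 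We test the difference of the sub- and supersolution inequalities with $\eta=-(k_j-w)_+\phi_j^p$, which is Lipschitz and lies in $\bX(B_R)$. Write $w_j^-:=(k_j-w)_+$.

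\textbf{Step 1: Caccioppoli inequality.} Writing $J_p(v(x)-v(y))-J_p(u(x)-u(y))$ and using \eqref{EL2.6D}, the local part controls
\[
\int_{B_j}\int_{B_j}|w_j^-(x)\phi_j(x)-w_j^-(y)\phi_j(y)|^2\beta(x,y)\dnu .
\]
This is the usual algebra for truncations with a quadratic weight $\beta$, since the equation for $w$ is linear with coefficient comparable to $\beta$. The error terms are as follows.
\begin{itemize}
\item A gradient term $\iint (w_j^-)^2|\phi_j(x)-\phi_j(y)|^2\beta\,\dnu\lesssim 2^{2j}r^{-2\alpha}k_j^2|B_j\cap\{w<k_j\}|$, using the upper bound in \eqref{EL2.6E}.
\item A tail term. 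Since $w\ge0$ in $\Rn$, we have $w_j^-\le k_j$ everywhere, and the interaction with $B_j^c$ is bounded, exactly as for $I_2$ in Lemma~\ref{L2.6}, by $C k_j r^{-2\alpha}|B_j\cap\{w<k_j\}|$. This uses $\sup_{x\in B_{\tilde r_j}}\int_{B_j^c}\beta\,|x-y|^{-n-sp}\dy\lesssim 2^{j(n+2\alpha)}r^{-2\alpha}$ together with the restriction on $r$.
\item The reaction term $\int |f(x,u)-f(x,v)|w_j^-\phi_j^p\le Lk_j\int_{B_j\cap\{w<k_j\}}w$. Since $w<k_j$ there, this is at most $Lk_j^2|B_j\cap\{w<k_j\}|$, which is harmless because $r^{-2\alpha}\ge1$.
\end{itemize}
The tail term is only linear in $k_j$. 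The key point is that the nonlocal contribution has the good sign wherever $w(y)\ge k_j$ and $w(x)<k_j$, since it helps. The bad part comes only from $y$ where $w(y)<k_j$, and there $|w(x)-w(y)|\le k_j$. So every error is bounded by $C2^{j(n+2)}r^{-2\alpha}k_j^2|B_j\cap\{w<k_j\}|$.

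\textbf{Step 2: reduction to an unweighted seminorm.} On the left we use the lower bound in \eqref{EL2.6E} to restrict to the cone kernel $\tilde\beta_l$. We then invoke the comparability \eqref{DK} from \cite{DK20} to pass to the full $W^{\alpha,2}$ Gagliardo seminorm of $w_j^-\phi_j$ on $B_j$.

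\textbf{Step 3: iteration.} Apply the second part of Lemma~\ref{L2.5} with $q=2$ and the order $\alpha$; if $2\alpha\ge n$, replace $\alpha$ by a smaller order. On $B_{j+1}\cap\{w<k_{j+1}\}$ we have $w_j^-\ge k_j-k_{j+1}=2^{-j-1}\delta k$. Setting $A_j=|B_j\cap\{w<k_j\}|/|B_j|$, this yields
\[
A_{j+1}\le c_0\,b^j A_j^{1+\beta},\qquad \beta=\tfrac{2\alpha}{n-2\alpha}.
\]
The powers of $r$ cancel by scaling: $r^{2\alpha}\cdot r^{-2\alpha}$. The powers of $k$ cancel because every error term is quadratic in $k_j$, and $k_j$ is comparable to $\delta k$. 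Hence $c_0$ and $b$ are independent of $r$, $k$ and $\delta$.

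By Lemma~\ref{L2.6} applied with the given $\sigma$, we have $A_0\le \tilde c/(\sigma\log\frac1{2\delta})$. We now fix $\delta\in(0,\frac14)$ so small that $A_0\le c_0^{-1/\beta}b^{-1/\beta^2}$. Lemma~\ref{lem2.4} then gives $A_j\to0$, that is, $|B_{4r}\cap\{w<\delta k\}|=0$, so $\inf_{B_{4r}}w\ge\delta k$ (for continuous $w$).

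\textbf{Main obstacle.} The hard step is the tail and error bookkeeping in Step 1. All error terms must scale like $k_j^2 r^{-2\alpha}$ and not like $k_j^{p-1}$ or $k_j$ times a tail. The plan handles this by linearizing through $\beta$, which is Lipschitz-controlled and independent of $k$. It also uses $w\ge0$ globally, so that the bad tail region is confined to $\{w<k_j\}$ where differences are at most $k_j$; this is why the hypothesis $k<1$ suffices, with the reaction term absorbed. If this bound fails, the fallback is to include $\sup_{B_j}$ of the tail integrand into $c_0$, again using the restriction on $r$ exactly as in Lemma~\ref{L2.6}.
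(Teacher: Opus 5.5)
Your proposal is correct and follows the paper's proof essentially step for step. Both use a De Giorgi iteration with truncations $(\ell_j-w)_+$ on the balls $B_{4r+2^{1-j}r}$, the cone lower bound \eqref{EL2.6E} together with \eqref{DK} to recover the $W^{\alpha,2}$ seminorm, Lemmas~\ref{L2.5} and~\ref{lem2.4} for the iteration, and Lemma~\ref{L2.6} to make $A_0$ small through the choice of $\delta$. Your two changes are valid refinements, though neither is needed for the statement:
\begin{itemize}
\item You write the difference of the $J_p$ terms as $a(x,y)(w(x)-w(y))$ with $a\simeq\beta$, which removes the $\varepsilon$-absorption step in the Caccioppoli inequality.
\item You bound the tail through $\beta\le\kappa|x-y|^{p-2}$ in $B_R$. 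The paper instead uses the cruder bound $(1+|v(y)|)^{p-2}$, which produces the factor $r^{2-p}$. Your bound makes $c_0$ independent of $r$, and it also makes the assumption $k<1$ unnecessary, contrary to your closing remark that this assumption is what makes the argument work.
\end{itemize}
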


\begin{proof}
Let $\phi$ be a smooth cutoff function supported in $B_\rho$ for some $r\leq \rho \leq 6r$ and consider the test function $w_- :=(\ell - w)_+$, for some $\ell \in (\delta k, 2 \delta k)$. As argued in Lemma ~\ref{L2.6}, $w_-\phi^2$ is a valid test function.
From \eqref{main-sc} we then have
\begin{align*}
&\int_{\Rn} \int_{\Rn}[J_p(u(x) - u(y))- J_p(v(x)- v(y))](w_{-}(x)\phi^2(x) - w_-(y) \phi^2(y))\dnu
\\
&\leq 2\int_{B_\rho} (f(x, u)-f(x, v))w_-(x)\phi^2(x) \dx
\\
&\leq 2L \int_{B_\rho} |u-v|w_-(x)\phi^2(x) \dx
\\
&\leq 2L \ell^2 |B_\rho \cap \{ w < \ell \}|,
\end{align*}
where $L$ is the local Lipschitz constant of $f(x,\cdot)$, chosen uniformly with respect to $x$. As in Lemma \ref{L2.6}, we write the left-hand side as

\begin{align*}
 & \underbrace{\int_{B_{\rho}} \int_{B_{\rho}} \left[ J_p(u(x) - u(y)) - J_p(v(x) - v(y)) \right](w_-(x)\phi^2(x) - w_- (y) \phi^2(y)) \frac{\dx\dy}{|x-y|^{n+sp}}}_{I_1} 
\\
&\quad \underbrace{+ \int_{B_{\rho}} w_-(x) \phi^2(x) \dx\int_{B_{\rho}^c} \left[ J_p(u(x) - u(y)) - J_p(v(x) - v(y)) \right] \frac{\dy}{|x-y|^{n+sp}} }_{I_2}
\\
&\qquad \underbrace{-  \int_{B_{\rho}} w_-(y) \phi^2(y) \dy\int_{B_{\rho}^c} \left[ J_p(u(x) - u(y)) - J_p(v(x) - v(y)) \right] \frac{\dx}{|x-y|^{n+sp}}  }_{I_3}.
\end{align*}
Thus, we get
\begin{equation}\label{EL2.7B}
I_1+ I_2 + I_3 \leq C \ell^2 |B_\rho \cap \{ w < \ell \}|.
\end{equation}
Let us first compute $I_2$.
Using monotonicity of $J_p$ and 
$w_-(x) > 0 \Leftrightarrow w(x) < \ell \Leftrightarrow v(x) < \ell + u(x)$, we see that
\begin{align*}
\quad J_p(u(x) - u(y) ) - J_p(v(x)-v(y))
& \geq J_p(u(x) - v(y) ) - J_p(\ell +u(x) - v(y) ) 
\\
& \geq -C [|u(x) - v(y)| + |\ell + u(x) - v(y)|]^{p-2} \ell 
\\
& \geq -C (1+|v(y)|)^{p-2}\ell.
\end{align*}
Therefore,
\begin{align*}
I_2 &\geq - C\ell^2 \int_{B_\rho\cap\{w<\ell\}} \phi^2 (x) \int_{B_\rho^c} \frac{(1+|v(y)|)^{p-2}}{|x-y|^{n+sp}}\dy
\\
&\qquad \geq -C \ell^2 \int_{B_\rho\cap\{w<\ell\}} \phi^2 (x) \sup_{x \in \supp \phi} \left( \int_{B_\rho^c} \frac{(1+|v(y)|)^{p-2}}{|x-y|^{n+sp}}\dy\right).
\end{align*}
We can estimate $I_3$ in a similar fashion, giving us 
$$I_2 + I_3 \geq -C \ell^2 \int_{B_\rho\cap\{w<\ell\}} \phi^2 (x) \sup_{x \in \supp \phi} \left( \int_{B_\rho^c} \frac{(1+|u(y)|+|v(y)|)^{p-2}}{|x-y|^{n+sp}}\dy \right)\dx.$$ 

Next we estimate $I_1$. We claim that for a given $\varepsilon>0$, there exists $C_{\varepsilon}$ such that
\begin{equation}\label{EL2.7C}
\begin{split}
&J(u(x) - u(y))-J(v(x) - v(y)) (w_-(x) \phi^2(x) - w_-(y)\phi^2(y) )
\\
& \qquad\geq \frac{C_2}{2}\frac{\upkappa^{p-2}}{4^{p-2}} \beta_l(x,y)|w_-(x) - w_-(y)|^2\max \{\phi^2(x) , \phi^2(y) \} 
\\
&\qquad \qquad -\beta_r(x,y) \varepsilon |w_-(x) - w_-(y)|^2\max \{\phi^2(x) , \phi^2(y) \} 
\\
&\qquad \qquad \quad - C_{\varepsilon} \beta_r (x,y) |\phi(x) - \phi(y)|^2 \max \{ w_-^2(x), w_-^2(y) \},
\end{split}
\end{equation}
where $C_2$ and $\upkappa$ are given by \eqref{EL2.6D} and \eqref{cone-est} respectively, $\beta_l$ and $\beta_r$ are given by \eqref{EL2.6E}.
To prove the inequality, by symmetry we may assume that $w_-(x) \geq w_-(y)$. Also, assume that $\beta(x,y) > 0$, otherwise the inequality becomes trivial.

Now we consider two situations.

\noindent\textbf{Case 1:} Let $w_-(x) \phi^2(x) - w_-(y)\phi^2(y) \geq 0$.
If $w_-(y)=0=w_-(x)$, then both sides of \eqref{EL2.7C} become 0. If $w_-(y)=0<w_-(x) \Rightarrow w(x) < \ell \leq w(y) \Rightarrow u(x) - u(y) > v(x) - v(y)$. Thus, from \eqref{EL2.6D}, we get
\begin{align}\label{EL2.7D}
&[J(u(x)-u(y))-J(v(x) - v(y))]w_-(x)\phi^2(x) \nonumber
\\
&\geq C_2 \beta(x,y) \left( w(y) - w(x) \right)w_-(x) \phi^2(x) \nonumber
\\
&\geq C_2 \beta(x,y) |w_-(x) -w_-(y)| w_-(x) \phi^2(x) \nonumber
\\
&= C_2 \beta(x,y) (w_-(x)- w_-(y))(w_-(x) \phi^2(x) -w_-(y)\phi^2(y)).
\end{align}
Next, suppose $0 < w_-(y) \leq w_-(x) \Rightarrow w(x) \leq w(y) \Rightarrow v(x) - v(y) \leq u(x) - u(y)$.
If $v(x)- v(y) = u(x) - u(y)$ the claim holds trivially. When $v(x) - v(y) < u(x) - u(y)$
\begin{align}\label{EL2.7E}
&[J(u(x) - u(y) - J(v(x) - v(y))](w_-(x) \phi^2(x) -w_-(y)\phi^2(y)) \nonumber
\\
&\geq C_2 \beta(x,y) (w(y)-w(x))(w_-(x) \phi^2(x) -w_-(y)\phi^2(y)) \nonumber
\\
&= C_2 \beta(x,y)  (w_-(y)-w_-(x))(w_-(x) \phi^2(x) -w_-(y)\phi^2(y)).
\end{align}
Note that both \eqref{EL2.7D} and \eqref{EL2.7E} are the same. Now, if  $\phi(x) \geq \phi(y)$, then
$$(w_-(x) \phi^2(x) -w_-(y)\phi^2(y)) \geq \phi^2(x) (w_-(x)-w_-(y)) = \max \{ \phi^2(x), \phi^2(y) \} (w_-(x) - w_-(y))$$
and \eqref{EL2.7C} follows.
Next, we suppose that $\phi(y) >\phi(x)$. If $w_-(x) = w_-(y)$, then \eqref{EL2.7C} follows from \eqref{EL2.7D} and \eqref{EL2.7E}. Thus we assume $w_-(x) > w_-(y)$. From Lemma~\ref{lem2.2} we have
$$\phi^2(x) \geq (1-c_2 \varepsilon)\phi^2(y) - (1+c_2\varepsilon)\frac{|\phi(x) - \phi(y)|^2}{\varepsilon}.$$
We choose  $\varepsilon = \frac{1}{\max \{1, 2c_2\}} \frac{w_-(x) - w_-(y) }{w_-(x) } \in (0,1]$. Then
\begin{align*}
&(w_-(x)- w_-(y)) (w_-(x) \phi^2(x) - w_-(y) \phi^2(y))
\\
&\geq (w_-(x)- w_-(y))[w_-(x)(1-c_2\varepsilon)\phi^2(y) - \frac{w_-(x)}{\varepsilon}(1+c_2 \varepsilon)|\phi(x) - \phi(y)|^2-w_-(y)\phi^2(y)]
\\
&\geq (w_-(x) -w_-(y))^2\phi^2(y)- \frac{1}{2}(w_-(x)- w_-(y))^2\phi^2(y) - (w_-(x))^2(1+c_2)|\phi(x)-\phi(y)|^2
\\ 
&\geq \frac{1}{2} |w_-(x) - w_-(y)|^2 \max \{\phi^2(x), \phi^2(y\} -(1+c_2)|\phi(x)-\phi(y)|^2 \max \{ w_-^2(x), w_-^2(y) \}.
\end{align*}
Thus the claim follows from \eqref{EL2.7D}, \eqref{EL2.7E} and \eqref{cone-est}.

\noindent\textbf{Case 2:} Let $w_-(x) \phi^2(x) - w_-(y) \phi^2(y) <0$. This implies $0< w_-(y) \leq w_-(x)$, and so, 
$u(x) - u(y)\geq v(x)- v(y)$. We must also have $\phi(y) >\phi(x)$. From \eqref{EL2.6D} we get
$$C_2 \beta(x,y)(w_-(x) - w_-(y))\leq J(u(x) - u(y))-J(v(x) - v(y)) \leq C_1 \beta(x,y)(w_-(x) - w_-(y)).$$
Note that for $u(x) - u(y)= v(x)- v(y)\Leftrightarrow w_-(x)=w_-(y)$, these inequalities are trivial, and in other case \eqref{EL2.6D} applies.
We write
$$w_-(x)\phi^2(x) - w_-(y)\phi^2(y) = w_-(x) (\phi(x) - \phi(y))^2 + 2 w_-(x) \phi(y)(\phi(x) - \phi(y)) + (w_-(x) - w_-(y)) \phi^2(y).
 $$
Using Young's inequality and \eqref{EL2.6E} we compute
\begin{align*}
&2 [J(u(x) - u(y)) - J(v(x)- v(y))]  w_-(x)\phi(y)(\phi(x)-\phi(y))
\\
&\quad \leq2C_1\beta(x, y) (w_-(x)-w_-(y)) w_-(x)\phi(y)|\phi(x)-\phi(y)|
\\
&\quad \leq \varepsilon \beta_r(x,y) |w_-(x) - w_-(y)|^2 \phi^2(y) + C_\varepsilon\beta_r w_-^2 (x) |\phi(x) - \phi(y)|^2.
\end{align*}
Therefore,
\begin{align*}
&J(u(x) - u(y))-J(v(x) - v(y)) (w_-(x) \phi^2(x) - w_-(y)\phi^2(y) )
\\
& \geq C_2 \beta(x,y)(w_-(x) - w_-(y))^2\max \{\phi^2(x), \phi^2(y)\}
\\
&\qquad  +2[J(u(x) - u(y)) - J(v(x)- v(y))] w_-(x)\phi(y)(\phi(x)-\phi(y))
\\
&\geq C_2 \frac{\upkappa^{p-2}}{4^{p-2}} \beta_l(x,y)(w_-(x) - w_-(y))^2\max \{\phi^2(x), \phi^2(y)\}
\\
&\qquad -\varepsilon \beta_r(x,y) |w_-(x) - w_-(y)|^2 \max\{\phi^2(x), \phi^2(y)\} - C_\varepsilon\beta_r |\phi(x) - \phi(y)|^2\max \{ w_-^2(x), w_-^2(y) \}.
\end{align*}
Hence we have proved the claim \eqref{EL2.7C}.

Using
$$|w_-(x)\phi(x) - w_-(y) \phi(y)|^2 \leq 2|w_-(x) - w_-(y)|^2 \max \{\phi^2(x), \phi^2(y) \} + 2 |\phi(x) - \phi(y)|^2 \max \{ w_-^2(x), w_-^2(y) \} $$
and 
$$|w_-(x) - w_-(y)|^2 \max \{ \phi^2(x), \phi^2(y) \} \leq 2|w_-(x)\phi(x) - w_-(y)\phi(y)|^2 + 2 \max \{ w_-^2(x), w_-^2(y)\} |\phi(x) - \phi(y)|^2,$$
and \eqref{EL2.7C}, we can estimate, with $\eta=\frac{C_2}{4}\frac{\upkappa^{p-2}}{4^{p-2}}$,
\begin{align*}
I_1 &\geq \eta \int_{B_{\rho}} \int_{B_{\rho}} |w_-(x) \phi(x) -w_-(y)\phi(y)|^2\frac{\beta_l(x,y)\dx\dy}{|x-y|^{n+sp}}
\\
&\quad -2\varepsilon \int_{B_{\rho}}\int_{B_{\rho}} |w_-(x) \phi(x) -w_-(y)\phi(y)|^2\frac{\beta_r(x,y)\dx\dy}{|x-y|^{n+sp}}
\\
&\qquad -(C_\varepsilon+\eta+2\varepsilon) \int_{B_{\rho}}\int_{B_{\rho}} \max \{ w_-^2(x), w_-^2(y) \} |\phi(x) - \phi(y) |^2 
 \frac{\beta_r(x,y)\dx\dy}{|x-y|^{n+sp}}
\\
&\geq \eta \int_{B_{\rho}} \int_{B_{\rho}} |w_-(x) \phi(x) -w_-(y)\phi(y)|^2{\tilde\beta_l(x,y)\dx\dy}
\\
&\quad -2\varepsilon \int_{B_{\rho}}\int_{B_{\rho}} |w_-(x) \phi(x) -w_-(y)\phi(y)|^2\frac{\dx\dy}{|x-y|^{n+2\alpha}}
\\
&\qquad -C \int_{B_{\rho}}\int_{B_{\rho}} \max \{ w_-^2(x), w_-^2(y) \} |\phi(x) - \phi(y) |^2 
 \frac{\dx\dy}{|x-y|^{n+2\alpha}}
\\
&\geq \frac{\eta}{A} \int_{B_{\rho}} \int_{B_{\rho}} |w_-(x) \phi(x) -w_-(y)\phi(y)|^2\frac{\dx\dy}{|x-y|^{n+2\alpha}}
\\
&\quad -2\varepsilon \int_{B_{\rho}}\int_{B_{\rho}} |w_-(x) \phi(x) -w_-(y)\phi(y)|^2\frac{\dx\dy}{|x-y|^{n+2\alpha}}
\\
&\qquad -C \int_{B_{\rho}}\int_{B_{\rho}} \max \{ w_-^2(x), w_-^2(y) \} |\phi(x) - \phi(y) |^2 
 \frac{\dx\dy}{|x-y|^{n+2\alpha}},
\end{align*} 
where in the last inequality we use \eqref{DK} and $\tilde\beta$ is same as in Lemma~\ref{L2.6}. Now we set $\varepsilon<\frac{\eta}{4A}$
and gather the estimates in \eqref{EL2.7B} we obtain
\begin{align}\label{EL2.7F}
&\int_{B_{\rho}}\int_{B_{\rho}} |w_-(x) \phi(x) - w_-(y)\phi(y)|^2 \frac{\dx\dy}{|x-y|^{n+2\alpha}}\nonumber
\\
&\quad \leq C\ell^2 \int_{B_{\rho}}\int_{B_{\rho}\cap \{w<\ell\}} |\phi(x) - \phi(y)|^2 \frac{\dx\dy}{|x-y|^{n+2\alpha}} \nonumber
\\
& \qquad +  C\ell^2 \int_{B_\rho\cap\{w<\ell\}} \phi^2 (x) \sup_{x \in \supp \phi} \left( \int_{B_\rho^c} \frac{(1+|u(y)|+|v(y)|)^{p-2}}{|x-y|^{n+sp}}\dy \right) \nonumber
\\
&\qquad +C \ell^2 |B_\rho \cap \{ w < \ell \}|.
\end{align}
Let us now define the following quantities:
$$\ell \equiv \ell_j := \delta k +\frac{\delta k}{2^{j+1}}, \qquad w_- \equiv w_j := (\ell_j - w)_+$$
$$\rho \equiv \rho_j:= 4r +\frac{2r}{2^j} \quad \text{and} \quad \tilde \rho_j = \frac{\rho_{j+1}+ \rho_j}{2} $$
for $j= 0,1,2,\ldots$. By our choice $\tilde\rho_j-\rho_{j+1}=\rho_j-\tilde \rho_j=\frac{\rho_{j}-\rho_{j+1}}{2}= r 2^{-j-1}$ and $\ell_j< 2\delta k$. Moreover,
$$w_j \geq (\ell_j - \ell_{j+1}) \mathbbm{1}_{\{w < \ell_{j+1}} \} = \frac{\delta k }{2^{2+j}} \mathbbm{1}_{\{w < \ell_{j+1}} \}  \geq 2^{-j-3} \ell_{j} \mathbbm{1}_{\{w < \ell_{j+1}} \}.$$
Let $\phi_j \in C_0^\infty(B_{\tilde \rho_j})$ be such that $0 \leq \phi_j \leq 1$, $\phi_j = 1$
 in $B_{\rho_{j+1}}$ and $|\grad \phi_j| \leq C\frac{2^{j+1}}{r}$,
for some $C$ independent of $r$ and $j$. Using Lemma~\ref{L2.5}, since $2\alpha=sp-p+2<2\leq n$, we get
\begin{align}\label{EL2.7G}
(2^{-j-3} \ell_j)^2 \left( \dfrac{|B_{\rho_{j+1}} \cap \{w < \ell_{j+1} \} |}{|B_{\rho_{j+1}}|} \right)^{\frac{2}{2^*}}
&\leq 
(\ell_j-\ell_{j+1})^2 \left( \dfrac{|B_{\rho_{j+1}} \cap \{w < \ell_{j+1} \} |}{|B_{\rho_{j+1}}|} \right)^{\frac{2}{2^*}} \nonumber
\\
&\leq 
 \left( \fint_{B_{\rho_{j+1}}}       w_j^{2^*} dx \right)^{\frac{2}{2^*}}\nonumber
 \\
 &\leq C \left( \fint_{B_{\rho_{j}}}       w_j^{2^*} \phi_j^{2^*}  dx \right)^{\frac{2}{2^*}}\nonumber
 \\
 &\leq C(\rho_j)^{2\alpha} (2^{j+3})^{\frac{2}{2^*}} \fint_{B_{\rho_{j}}}   \int\limits_{B_{\rho_{j}}} \frac{( w_j(x) \phi_j(x) -  w_j(y) \phi_j(y) )^2}{|x-y|^{n+2\alpha}} \dy\dx.
 \end{align}
 Again,
 \begin{align*}
 \fint_{B_{\rho_j}}\int\limits_{B_{\rho_j}\cap \{w<\ell_j\}} |\phi_j(x) - \phi_j(y)|^2 \frac{\dx\dy}{|x-y|^{n+2\alpha}}\dx\dy
& \leq \fint_{B_{\rho_j}}\int\limits_{B_{\rho_j}\cap \{w<\ell_j\}} \norm{\grad\phi_j}^2_\infty \frac{\dx\dy}{|x-y|^{n+2\alpha-2}}\dx\dy
 \\
 &\leq C \frac{|B_{\rho_j}\cap\{w<\ell_j\}|}{|B_{\rho_i}|} \left[\frac{2^{j+1}}{r}\right]^2 (\rho_j)^{2-2\alpha},
 \end{align*}
 and using the boundedness of $u, v$ in $B_R$, we obtain
 \begin{align*}
&\frac{1}{|B_{\rho_j}|} \int_{B_{\rho_j}\cap\{w<\ell_j\}} \phi_j^2 (x) \sup_{x \in \supp \phi_j} \left( \int_{B_{\rho_j}^c} \frac{(1+|u(y)|+|v(y)|)^{p-2}}{|x-y|^{n+sp}}\dy \right)
\\
&\quad  \leq \frac{C}{|B_{\rho_j}|} \int_{B_{\rho_j}\cap\{w<\ell_j\}} \phi_j^2 (x) \sup_{x \in \supp \phi_j} 
\left( (\rho_j-\tilde\rho_j)^{-sp} + \int_{B_{R}^c} \frac{(1+|u(y)|+|v(y)|)^{p-2}}{|x-y|^{n+sp}}\dy \right)
\\
&\quad\leq C \frac{|B_{\rho_j}\cap\{w<\ell_j\}|}{|B_{\rho_i}|}  (r 2^{-j-1})^{-sp},
 \end{align*}
 where the constant $C$ depends on the supremum of $u, v$ in $B_R$ and $\tail_{sp,p-1}(0, R;u), \tail_{sp,p-1}(0, R;v)$, and
 the last inequality is computed as in Lemma~\ref{L2.6} (see below \eqref{EL2.6F}).
Gathering the above estimates with \eqref{EL2.7G} in \eqref{EL2.7F}, and defining
$$A_j = \frac{|B_{\rho_j} \cap \{ w < \ell_j \}|}{|B_{\rho_j|}}, $$
 we arrive at 
\begin{align*}
A_{j+1}^{\frac{2}{2^*}}
\leq C (2^{2\frac{2^*+1}{2^*}})^{j+3} \left( 2^{2(j+1)} r^{-2\alpha} + 2^{(j+1)sp}r^{2-p} + 1\right) A_j
\Rightarrow A_{j+1}\leq c_0 b^j A_j^{\beta+1}
\end{align*}
for some constant $c_0>1$, where $\beta=\frac{2^*-2}{2}>0$ and $b=2^{1+2\times 2^*+sp\frac{2^*}{2}}$. The reader might have noticed that unlike \cite{DKP14}, the constant $c_0$ above depends on $r$.  Now using \eqref{EL2.7A} and Lemma~\ref{L2.6} we can find $\delta\in (0, \frac{1}{4})$
small enough to satisfy $ A_0  \leq  c_0^{-\frac{1}{\beta}} \,b^{-\frac{1}{\beta^2}}$. Thus, by Lemma~\ref{lem2.4}, we have $\lim_{j\to\infty}A_j=0$,
giving us $|B_{4r}\cap \{w<\delta k\}|=0$. This gives us the conclusion of our lemma.
\end{proof}

Now we are ready to prove Theorem~\ref{strongcomp}.
\begin{proof}[Proof of Theorem~\ref{strongcomp}]
Let $x_0\in\Omega\setminus\mathcal{Z}_u$ be such that $w(x_0)=v(x_0)-u(x_0)=0$. Choose $R\in (0, 1)$ small enough so that \eqref{cone-est} holds.
Let $r>0$ be small enough such that $\max \{ r^{(1-\frac{p-2}{sp})(p-1)}, 16r \} <R$. Define
$\upgamma := \smallfint_{B_r(x_0)}w \dx$. We can let $r$ be small enough so that $\upgamma<1$.
We claim that $\upgamma=0$. If not, then we must have 
$\upgamma >0$. Let $k =\frac{\upgamma}{2}$. Note that, since $\upgamma>0$,
\begin{equation}\label{ET2.1A}
|B_r(x_0) \cap \{ w \geq k\}| \geq \frac{\upgamma}{4M} |B_r(x_0)|,
\end{equation}
where $M := \sup_{B_r(x_0)} w>0$. In fact, if the above inequality fails to hold then we would have
\begin{align*}
\upgamma &= \fint_{B_r(x_0)}w(x) \dx
\\
&= \dfrac{1}{|B_r(x_0)|}\int\limits_{B_r(x_0)\cap \{w <k\}}w(x) \dx +\dfrac{1}{|B_r(x_0)|}\int\limits_{B_r(x_0)\cap \{w \geq k \}}w(x) \dx
\\
&\leq \frac{\upgamma}{2} + \frac{\upgamma}{4M} M<\upgamma,
\end{align*}
which can not happen. Note that \eqref{ET2.1A} is same as \eqref{EL2.7A} with $\sigma=\frac{\upgamma}{4M}\in (0, 1)$. Hence by Lemma~\ref{L2.7}
there exists a $\delta\in (0, \frac{1}{4})$ satisfying
$$0<\delta\frac{\upgamma}{2}=\delta k\leq \inf_{B_{4r}(x_0)}w=0.$$
This is a contradiction, and therefore, we must have $\upgamma=0$. This, in particular, implies that $w\equiv 0$ in $B_r(x_0)$.

Now, let $\varphi\neq 0$ be a smooth nonnegative function supported in $B_{r}(x_0)$. Using $\varphi$ as a test function, we get
\begin{align*}
\int_{\Rn} \int_{\Rn}[J_p(u(x) - u(y))- J_p(v(x)- v(y))](\varphi(x) - \varphi(y))\frac{\dx\dy}{|x-y|^{n+sp}} &\leq \int_{\Rn} (f(x, u) -f(x, v))\varphi(x) \dx
\\
&=0,
\end{align*}
since $u=v$ in $B_r(x_0)$.
Using monotonicity of $J_p$ and the fact $v\geq u$, we compute the left-hand side as follows
\begin{align*}
&\int_{\Rn} \int_{\Rn}[J_p(u(x) - u(y))- J_p(v(x)- v(y))](\varphi(x) -  \varphi(y))\frac{\dx\dy}{|x-y|^{n+sp}}
\\
&= \int_{B_r(x_0)} \int_{B_r(x_0)}[J_p(u(x) - u(y))- J_p(v(x)- v(y))](\varphi(x) - \varphi(y))\frac{\dx\dy}{|x-y|^{n+sp}} 
\\ &\quad + \int_{B^c_r(x_0)} \int_{B_r(x_0)}[J_p(u(x) - u(y))- J_p(v(x)- v(y))]\varphi(x) \frac{\dx\dy}{|x-y|^{n+sp}} 
\\ &\qquad - \int_{B_r(x_0)} \int_{B^c_r(x_0)}[J_p(u(x) - u(y))- J_p(v(x)- v(y))]\varphi(y) \frac{\dx\dy}{|x-y|^{n+sp}} 
\\
&=  \int_{B^c_r(x_0)} \int_{B_r(x_0)}[J_p(u(x) - u(y))- J_p(u(x)- v(y))]\varphi(x) \frac{\dx\dy}{|x-y|^{n+sp}} 
\\ &\quad - \int_{B_r(x_0)} \int_{B^c_r(x_0)}[J_p(u(x) - u(y))- J_p(v(x)- u(y))]\varphi(y) \frac{\dx\dy}{|x-y|^{n+sp}} 
\\
&\geq 0.
\end{align*} 
Thus, we must have 
$$ \int_{B^c_r(x_0)} \int_{B_r(x_0)}[J_p(u(x) - u(y))- J_p(u(x)- v(y))]\varphi(x) \frac{dxdy}{|x-y|^{n+sp}} =0.$$
Since $J_p$ is strictly increasing, we get $u=v$ in $B^c_r(x_0)$.
Therefore, we must have $u=v$ in $\Rn$, which completes the proof.
\end{proof}

With the help of Theorem~\ref{strongcomp} we obtain the following strong comparison principle.
\begin{thm}\label{T2.8}
Consider $p>2$.
 Let $v\in W^{s, p}_{\rm loc}(\Omega)\cap L^{p-1}_{sp}(\Rn)$ be a local weak supersolution and $u\in W^{s, p}_{\rm loc}(\Omega)\cap L^{p-1}_{sp}(\Rn)$
be a local weak subsolution to \eqref{main-sc}. Also, assume that $v\geq u$ in $\Rn$. Then $v(x_0)=u(x_0)$ for some $x_0\in\Omega$ would imply
$v\equiv u$ in $\Rn$ if one of the following holds.
\begin{itemize}
\item[(a)] $\frac{sp}{p-1}< 1$, both $u$ and $v$ are locally $\frac{sp}{p-1}+\epsilon$-H\"{o}lder in $\Omega$ for some $\epsilon>0$.
\item[(b)] $\frac{sp}{p-2}>1$, both $u$ and $v$ are locally Lipschitz in $\Omega$ with $u\in C^1(\Omega)$. Moreover, $\mathcal{Z}_u\Subset \Omega$.
\end{itemize}
If $\Omega$ is bounded, and $u, v$ are continuous in a neighbourhood of $\Omega$. Then $\min_{\partial\Omega}(v-u)>0$ implies
that $\min_{\bar\Omega} (v-u)>0$.
\end{thm}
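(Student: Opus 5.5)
In case (a) I will not use the De Giorgi machinery. By Lemma~\ref{PL2.5}, since $\frac{sp}{p-1}<1$ and $u,v\in C^{\frac{sp}{p-1}+\epsilon}_{\rm loc}(\Omega)$, the quantities $(-\Delta_p)^s u$ and $(-\Delta_p)^s v$ are classically defined and continuous in $\Omega$. The continuity comes from the integrability bound $|x-y|^{\epsilon(p-1)-n}$ near the diagonal and the tail condition away from it. Moreover,
\[
(-\Delta_p)^s u\le f(x,u), \qquad (-\Delta_p)^s v\ge f(x,v)
\]
hold almost everywhere. Since both sides of each inequality are continuous, they hold pointwise everywhere in $\Omega$.

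At a point $x_0$ with $u(x_0)=v(x_0)$ the right-hand sides coincide, so subtracting gives
\[
\int_{\Rn}\big[J_p(v(x_0)-v(y))-J_p(u(x_0)-u(y))\big]\frac{\dy}{|x_0-y|^{n+sp}}\ \ge\ f(x_0,v(x_0))-f(x_0,u(x_0))=0 .
\]
Because $v(x_0)=u(x_0)$ and $v(y)\ge u(y)$, the integrand equals $J_p(u(x_0)-v(y))-J_p(u(x_0)-u(y))\le 0$. Hence the integrand vanishes almost everywhere, and strict monotonicity of $J_p$ gives $v\equiv u$ a.e. in $\Rn$. The principal value causes no trouble here, because the Hölder exponent exceeds $\frac{sp}{p-1}$ and the integral therefore converges absolutely.

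In case (b) I argue by contradiction. Suppose $u\not\equiv v$. The touching set $T=\{x\in\Omega: w(x)=0\}$, where $w=v-u\ge0$, is nonempty and consists of minimum points of $w$. By Theorem~\ref{strongcomp}, $T\cap(\Omega\setminus\mathcal Z_u)=\emptyset$, so $T\subset\mathcal Z_u$.

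Take $x_0\in T$. The idea is to argue globally, as in the final step of the proof of Theorem~\ref{strongcomp}, through the nonlocal term. Since $\mathcal Z_u\Subset\Omega$ and $u\in C^1$, choose an open set $D$ with $\mathcal Z_u\subset D\Subset\Omega$ and $\partial D\cap\mathcal Z_u=\emptyset$.

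First I test the equations with a nonnegative $\varphi$ concentrated near $x_0$. I expect the main obstacle to be precisely this step: showing that $w>0$ on the nonempty open set $\Omega\setminus \overline{D}$ forces positivity at points of $\mathcal Z_u$. The cone estimate \eqref{cone-est} is unavailable there, so Lemma~\ref{L2.7} cannot be used. The route I would try is the sup/inf-convolution approach of Section~\ref{S-prelim}. Lemma~\ref{PL2.2} together with Lemma~\ref{PL2.3} makes $u^\varepsilon$ and $v_\varepsilon$ $C^{1,1}$ at a minimum point of $v_\varepsilon-u^\varepsilon$. Lemma~\ref{PL2.4} then gives classical inequalities at that point, and the same sign argument as in (a) applies: the integral of $J_p(u^\varepsilon(x_\varepsilon)-v_\varepsilon(y))-J_p(u^\varepsilon(x_\varepsilon)-u^\varepsilon(y))$ is bounded below by $-o(1)$ plus the minimum gap, while a positive-measure region where $w\ge c>0$ contributes a definite negative amount. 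Letting $\varepsilon\to0$ then yields a contradiction. Here $p>2>\frac{2}{2-s}$, so those lemmas apply.

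For the final assertion, suppose $\min_{\partial\Omega}(v-u)>0$ but $\min_{\bar\Omega}(v-u)\le0$. Let $t\ge0$ be the smallest shift such that $v+t\ge u$ on $\bar\Omega$. If additionally $v\ge u$ in $\Rn$, then $t=0$, touching occurs at an interior point, and the first part gives $u\equiv v$, which contradicts the positive boundary gap. I would state this under the standing hypothesis $v\ge u$ in $\Rn$, since $v+t$ need not remain a supersolution for the $x$-dependent $f$.
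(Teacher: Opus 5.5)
Part (a), and your reduction in (b) to $T\subset\mathcal{Z}_u$ via Theorem~\ref{strongcomp}, are exactly what the paper does. Your remark that continuity of both sides upgrades the a.e.\ inequalities of Lemma~\ref{PL2.5} to pointwise ones is a welcome extra detail.

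The sup/inf-convolution route for the remaining case is also the paper's route. However, you leave its decisive step unexecuted, and as you phrase it, it would fail. The issue is \emph{where} $v_\varepsilon-u^\varepsilon$ is minimized. You must minimize over $\bar D$ (the paper's $\Omega_1$). Since $T\subset\mathcal{Z}_u$ and $\partial D\cap\mathcal{Z}_u=\emptyset$, continuity gives $\min_{\partial D}w>0=\min_{\bar D}w$. By uniform convergence on $\bar D$, the minimizers $x_\varepsilon$ then lie in $D$ and accumulate on $T$. This does two things:
\begin{itemize}
\item It makes $x_\varepsilon$ a local minimum in a ball, which is what Lemma~\ref{PL2.3} requires.
\item It controls the near-diagonal part of the integral. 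For $y\in B_r(x_\varepsilon)\subset D$ one has exactly $v_\varepsilon(x_\varepsilon)-v_\varepsilon(y)\le u^\varepsilon(x_\varepsilon)-u^\varepsilon(y)$, so that part of the integrand is $\le 0$ with no error term.
\end{itemize}
Your ``$-o(1)$ plus the minimum gap'' cannot replace this. A discrepancy of size $m_\varepsilon>0$ inside $J_p$ produces a term of order $m_\varepsilon^{p-1}|x_\varepsilon-y|^{-n-sp}$ near $y=x_\varepsilon$, and that is not integrable. Only on $B_r(x_\varepsilon)^c$ does one pass to the limit, which gives
\[
\int_{B_r^c(\bar x)}\big[J_p(u(\bar x)-v(y))-J_p(u(\bar x)-u(y))\big]\frac{\dy}{|\bar x-y|^{n+sp}}\ \ge\ 0,\qquad \bar x\in T .
\]
The integrand here is $\le 0$, so $v=u$ off $B_r(\bar x)$. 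This localization is exactly where $\mathcal{Z}_u\Subset\Omega$ enters. The test-function step near $x_0$ that you mention is not needed.

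There is a second, more technical gap. Lemma~\ref{PL2.2} is stated for viscosity subsolutions in $L^\infty(\Rn)$. Your $u,v$ are only local weak solutions in $L^{p-1}_{sp}(\Rn)$, so $u^\varepsilon$ may even be $+\infty$. The paper handles this as follows:
\begin{itemize}
\item It first uses the implication ``continuous local weak solution $\Rightarrow$ viscosity solution''.
\item It truncates to $u_k,v_k$.
\item It replaces the tails outside $\Omega_3\Supset\Omega_1$ by bounded semicontinuous $u_{k,m},v_{k,m}$.
\item It absorbs these changes into continuous right-hand sides $f_k,f_{k,m},g_k,g_{k,m}$ that converge locally uniformly.
\item It then lets $\varepsilon\to0$, $m\to\infty$, $k\to\infty$.
\end{itemize}

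On the last assertion, your deduction from the first part is fine when (a) or (b) holds. Note, though, that the paper reads it as following from the localized viscosity step alone, under mere continuity near $\bar\Omega$. Positivity of $v-u$ near $\partial\Omega$ puts the zero set compactly inside $\Omega$, and the argument above then applies with $D$ containing that zero set.
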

We point out that the last part of the above result is the nonlocal analogue of \cite[Prop.~2.1]{GV89} and \cite[Theorem~1.3]{RS07}. However, unlike in their work, we do not require both the subsolution and supersolution to be nonnegative.

\begin{proof}
First we consider (a).
From Lemma~\ref{PL2.5}, we know that $(-\Delta_p)^s u$ and $(-\Delta_p)^sv$ are defined at every point in $\Omega$. Moreover,
$$(-\Delta_p)^s u\leq f(x,u) \quad \text{and}\quad (-\Delta_p)^s v\geq f(x,v)$$
are satisfied in the pointwise sense. Hence  
\begin{align*}
0=f(x_0,u(x_0))-f(x_0, v(x_0))&\geq (-\Delta_p)^s u(x_0)-(-\Delta_p)^s v(x_0)
\\
&=\int_{\Rn}[J_p(u(x_0)-u(y))-J_p(v(x_0)-v(y))]\frac{\dy}{|x_0-y|^{n+sp}}.
\end{align*}
Since $J_p(u(x_0)-u(y))-J_p(v(x_0)-v(y))\geq 0$, it follows from the above that $u\equiv v$ in $\Rn$.

To prove (b), we first show that
 $\Argmin_{\Omega}(v-u)\nsubseteq \mathcal{Z}_u$. Once this is established the result follows from Theorem~\ref{strongcomp}.
 We suppose, to the contrary, that $\Argmin_{\Omega}(v-u)\subseteq \mathcal{Z}_u$.
 Consider open bounded sets $\Omega_i, i=1,2,3,$ so that 
 $\mathcal{Z}_u\Subset\Omega_1\Subset\Omega_2\Subset\Omega_3\Subset \Omega$. Since  $\Argmin_{\Omega}(v-u)\subset \mathcal{Z}_u$, we must have
$\min_{\partial \Omega_1}(v-u)>0$. For $k\in\mathbb{N}$, define
$$u_k=\max\{\min\{u, k\}, -k\}\quad \text{and}\quad v_k=\max\{\min\{v, k\}, -k\}.$$
Set $k_0$ large enough so that for all $k\geq k_0$ we have $u_k=u$ and $v_k=v$ in $\Omega_3$. Also, define
\begin{align*}
f_k(x)&=f(x,u(x))+\int_{\Rn \setminus \Omega_3} \left[J_p(u(x)-u_k(y)) - J_p(u(x)-u(y))\right] \frac{\dy}{|x-y|^{n+sp}},
\\
g_k(x)&=f(x,v(x))+\int_{\Rn \setminus \Omega_3} \left[J_p(v(x)-v_k(y)) - J_p(v(x)-v(y))\right]\frac{\dy}{|x-y|^{n+sp}}.
\end{align*}
For $k\geq k_0$, the above functions are well-defined and also continuous in $\bar\Omega_2$. By the dominated convergence theorem, 
$f_k \rightarrow f(\cdot, u(\cdot))$ and $g_k \rightarrow g(\cdot, v(\cdot))$ locally uniformly in $\Omega_3$, as $k\to\infty$. Furthermore, since $u$ is a viscosity subsolution and
$v$ is a viscosity supersolution (which follows from the equivalence in \cite{KKL19}), we get
$$(-\Delta_p)^s u_k\leq f_k\quad \text{and}\quad (-\Delta_p)^sv_k\geq g_k\quad \text{in}\; \Omega_2$$
for all $k\geq k_0$. Also, $u_k, v_k\in L^\infty(\Rn)$.
Consider two sub-sequences $u_{k,n}\in L^\infty(\Rn)$ and $v_{k,n}\in L^\infty(\Rn)$, upper and lower semi continuous, respectively, such that
\begin{itemize}
\item $u_{k,m} = u_k$ in $\Omega_3$ for all $m\in\mathbb{N}$,
\item $v_{k,m} = v_k$ in $\Omega_3$ for all $m\in\mathbb{N}$,
\item $u_{k,m} \rightarrow u$ a.e. in $\Rn \setminus \Omega_3$,
\item $v_{k,m} \rightarrow v$ a.e. in $\Rn \setminus \Omega_3$,
\item $\norm{u_{k,m}}_{L^\infty}\leq \norm{u_{k}}_{L^\infty}+1$
and $\norm{v_{k,m}}_{L^\infty}\leq \norm{v_{k}}_{L^\infty}+1$
for all $m\geq 1$.
\end{itemize}
Define
\begin{align*}
f_{k,m}(x)&=f_k(x) +\int_{\Rn \setminus \Omega_3} \left[J_p(u_k(x)-u_{k,m}(y)) - J_p(u_k(x)-u_{k}(y))\right] \frac{\dy}{|x-y|^{n+sp}},
\\
g_{k,m}(x)&=g_k(x)+\int_{\Rn \setminus \Omega_3} \left[J_p(v_k(x)-v_{k,m}(y)) - J_p(v_k(x)-v_{k}(y))\right] \frac{\dy}{|x-y|^{n+sp}}.
\end{align*}
The above functions are well-defined and also continuous in $\bar\Omega_2$.
By the dominated convergence theorem, $f_{k,m} \rightarrow f_k$ and $g_{k,m} \rightarrow g_k$ as $m\to\infty$, locally uniformly in $\Omega_3$. It is easily seen that
$$(-\Delta_p)^s u_{k,m}\leq f_{k,m}\quad \text{and}\quad (-\Delta_p)^s v_{k,m}\geq g_{k,m}\quad \text{in}\; \Omega_2.
$$
Let $(u_{k,m})^\varepsilon$, $(v_{k,m})_\varepsilon$ denote the sup and inf-convolution to
$u_{k,m}$ and $v_{k,m}$, respectively. Furthermore, by Lemma~\ref{PL2.2}, we have
$$(-\Delta_p)^s (u_{k,m})^\varepsilon \leq f_{k,m} + \eta_\varepsilon \quad \text{and}\quad (-\Delta_p)^s(v_{k,m})_\varepsilon\geq g_{k,m}+\tilde\eta_\varepsilon\quad \text{in}\; \Omega_1,$$
where $\eta_\varepsilon, \tilde\eta_\varepsilon\to 0$ as $\varepsilon\to 0$, uniformly over $\bar\Omega_1$. Since 
$(v_{k,m})_\varepsilon-(u_{k,m})^\varepsilon\to v_{k,m}-u_{k,m}$ as $\varepsilon\to 0$, uniformly over $\bar\Omega_1$, we have
$$
\min_{\bar\Omega_1}((v_{k,m})_\varepsilon-(u_{k,m})^\varepsilon)\to \min_{\bar\Omega_1}(v_{k,m}-u_{k,m}).
$$
Again, letting $\mathcal{M}=\Argmin_{\bar\Omega_1}(v-u)={\Argmin}_{\bar\Omega_1}(v_k-u_k) =\Argmin_{\bar\Omega_1}( v_{k,m} - u_{k,m})$ for $k\geq k_0$, we get that
$$ {\Argmin}_{\bar\Omega_1}((v_{k,n})_\varepsilon-(u_{k,n})^\varepsilon)\ni x_{k, m,\varepsilon}
\to 
\mathcal{M}\quad \text{as}\; \varepsilon\to 0.
$$
Since $\min_{\partial \Omega_1}(v-u)>0$, we have $\mathcal{M}\subseteq \mathcal{Z}_u \subset \Omega_1^\circ$, and $x_{k,m, \varepsilon}\in \Omega_1^\circ$ for all
small enough $\varepsilon$. Using Lemmas~\ref{PL2.3} and \ref{PL2.4} we obtain 
\begin{equation}\label{T3.8A}
(-\Delta_p)^s (u_{k,m})^\varepsilon(x_{k,m,\varepsilon}) \leq (f_{k,m} + \eta_\varepsilon)(x_{k,m,\varepsilon}) \quad \text{and}\quad (-\Delta_p)^s(v_{k,m})_\varepsilon(x_{k,m,\varepsilon})\geq (g_{k,m}+\tilde\eta_\varepsilon)(x_{k,m,\varepsilon}).
\end{equation}
Let $0< 2r<\dist(\mathcal{Z}_u, \partial\Omega_1) $, and by the above discussion, for all small $\varepsilon$ we would have
$B_r(x_{k,m,\varepsilon})\subset \Omega_1$. Thus, from \eqref{T3.8A}, we obtain
\begin{align*}
&f_{k,m}(x_{k,m,\varepsilon})-g_{k,m}(x_{k,m,\varepsilon}) + \eta_\varepsilon(x_{k,m,\varepsilon})-\tilde\eta_\varepsilon(x_{k,m,\varepsilon})
\\
&\geq \int_{\Rn}[J_p((u_{k,m})^\varepsilon(x_{k,m,\varepsilon})-
(u_{k,m})^\varepsilon(x_{k,m,\varepsilon}+z))-J_p((v_{k,m})_\varepsilon(x_{k,m,\varepsilon})-
(v_{k,m})_\varepsilon(x_{k,m,\varepsilon}+z))]\frac{\dz}{|z|^{n+sp}}
\\
&\geq \int_{B^c_r}[J_p((u_{k,m})^\varepsilon(x_{k,m,\varepsilon})-
(u_{k,m})^\varepsilon(x_{k,m,\varepsilon}+z))-J_p((v_{k,m})_\varepsilon(x_{k,m,\varepsilon})-
(v_k)_\varepsilon(x_{k,m,\varepsilon}+z))]\frac{\dz}{|z|^{n+sp}},
\end{align*}
where in the last line we use the fact 
$$(v_{k,m})_\varepsilon(x_{k,m,\varepsilon})-(u_{k,m})^\varepsilon(x_{k,m,\varepsilon})\leq (v_{k,m})_\varepsilon(x_{k,m,\varepsilon}+z)-(u_{k,m})^\varepsilon(x_{k,m,\varepsilon}+z)
\quad \text{for}\; z\in B_r,$$
together with the monotonicity of $J_p$. Letting 
$\varepsilon\to 0$ and extracting a subsequence of $x_{k,m,\varepsilon}$, converging to $x_{k,m}\in\mathcal{M}$, we obtain from above
\begin{align*}
f_{k,m}(x_{k,m})-f_{k,m}(x_{k})
\geq \int_{B^c_r}[J_p(u_{k,m}(x_{k,m})-u_{k,m}(x_{k,m}+z))-J_p(v_{k,m}(x_{k,m})-
v_{k,m}(x_{k,m}+z))]\frac{\dz}{|z|^{n+sp}}.
\end{align*}
Now first we let $m \rightarrow \infty$ and then let $k\to\infty$, and extract a subsequence of $\{x_{k,m}\}$ that converges to
some $\bar{x}\in\mathcal{M}$ to see that 
$$0\leq \int_{B^c_r}[J_p(u(\bar x)-u(\bar{x}+z))-J_p(v(\bar{x})-
v(\bar{x}+z))]\frac{\dz}{|z|^{n+sp}}\leq 0,$$
by the dominated convergence theorem. Here we use the fact $v(x_{k,m})=u(x_{k,m})$, implying 
$$f(x_{k,m}, u(x_{k,m}))=f(x_{k,m}, v(x_{k, m})).$$
 Hence we have $u(z)=v(z)$ in $B^c_r(\bar{x})$. Since $r$ can be chosen arbitrary small, we must have $u\equiv v$
in $\Rn$. This contradicts our hypothesis $\min_{\partial \Omega_1}(v-u)>0$. Hence $\Argmin_{\Omega}(v-u)\nsubseteq \mathcal{Z}_u$ 
and therefore, the proof of (b) follows from Theorem~\ref{strongcomp}.

Proof of last part of the theorem follows easily from the above. Note that the proof of (b) implies that if $\min_{\partial\Omega}(v-u)>0$, then we must have
$\min_{\bar\Omega}(v-u)>0$.
\end{proof}


\begin{rem}
We need $f$ to be locally Lipschitz only for the case (b) in Theorem~\ref{T2.8}. In all other cases, the proof works for continuous $f$.
\end{rem}

\begin{rem}
The regularity results of \cite{BDLMS24a, BS25} justify the assumption in Theorem~\ref{T2.8}(a). Specifically, when sub- or supersolutions arise from $(-\Delta_p)^s w = g(x)$ with $\alpha$-H\"{o}lder data $g$, \cite{BS25} ensures that $w$ is locally almost $\min\{1, \frac{sp+\alpha}{p-1}, \frac{sp}{p-2}\}$-H\"{o}lder continuous.
\end{rem}

\section{Strong comparison principle for reflected solutions}\label{S-anti}
In this section, we establish a strong comparison principle for solutions reflected with respect to a hyperplane, which serves as a key ingredient in proving the monotonicity of solutions. We consider the equation
\begin{equation}\label{main-anti}
(-\Delta_p)^s u = f(u) \quad \text{in}\; \Omega,
\end{equation}
where $f$ is locally Lipschitz.  Our approach involves extending the results of Lemmas~\ref{L2.6} and~\ref{L2.7} to the reflected setting.

To state the main result of this section we need a few notations. Consider a unit vector $e$. For $\lambda\in \R$ we define
\begin{gather*}
\Sigma_\lambda=\{x\in\Rn\; : \; x\cdot e<\lambda\}, \quad T_\lambda=\{x\in\Rn: \; x\cdot e=\lambda\}=\partial \Sigma_\lambda, 
\\ 
x^\lambda=2[\lambda-(x\cdot e)]e + x, \quad u_\lambda(x)=u(x^\lambda).
\end{gather*}
Also, for a given domain $D$ we denote by $D^\lambda=\{x^\lambda\; :\; x\in D\}$, the reflection of $D$ with respect to $T_\lambda$.

Let us begin with the following lemma.
\begin{lem}\label{L3.1}
Let $u\in W^{s, p}_{\rm loc}(\Omega)\cap L^\infty(\Rn)$ be a local weak solution to \eqref{main-anti}. Consider $\lambda\in \R$ so that $\Omega\cap\Sigma_\lambda\neq \emptyset$
and $(\Omega\cap\Sigma_\lambda)^\lambda\subset\Omega$.
Consider $x_0 \in \Omega\cap\Sigma_\lambda$ and $R_0>0$ such that $B_{R_0} (x_0) \Subset \Omega\cap\Sigma_\lambda$. Also, assume that $u_\lambda \geq u$ in $\Sigma_\lambda$, 
$u_\lambda(x_0)=u(x_0)$ and
$u_\lambda-u\not\equiv 0$. Then there exists $R\in (0, R_0)$ and $\bar{u}, \bar{u}_\lambda$ satisfying the following.
\begin{enumerate}
\item[(i)] $\bar{u}=u$ and $\bar{u}_\lambda = u_\lambda$ in $B_{R}(x_0)$.
\item[(ii)] $\bar{u}_\lambda - \bar{u}$ is antisymmetric in $\Sigma_\lambda \setminus B_{R}(x_0)$. That is, for $x\in \Sigma_\lambda \setminus B_{R}(x_0)$ we have
$(\bar{u}_\lambda - \bar{u})(x^\lambda)=-(\bar{u}_\lambda - \bar{u})(x)$. Moreover, $\bu_\lambda(x^\lambda)=\bu(x)$ for $x\in \Sigma_\lambda \setminus B_{R}(x_0)$.
\item[(iii)] $\bu_\lambda \geq \bu $ in $\Sigma_\lambda$.
\item[(iv)] $\bu_\lambda = \bu$ in $B_{R}^\lambda (x_0)$.
\item[(v)] $(-\Delta_p)^s \bu_\lambda(x) - (-\Delta_p)^s \bu(x) \geq f(\bu_{\lambda}(x)) - f(\bu(x))$ in $B_R(x_0)$ in the (local) weak sense.
\end{enumerate}
\end{lem}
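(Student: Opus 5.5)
The plan is to build $\bu,\bu_\lambda$ by changing $u$ and $u_\lambda$ only on the reflected ball $B_R^\lambda(x_0)$, and then to prove (v) directly from the weak formulation. First I fix $R\in(0,R_0)$ with $R<\tfrac12\dist(x_0,T_\lambda)$. Then $B_R(x_0)\subset\Sigma_\lambda$, $B_R^\lambda(x_0)\subset\Rn\setminus\overline{\Sigma_\lambda}$, and the two balls lie at positive distance $d_0:=2(\lambda-x_0\cdot e)-2R>0$. Since $(\Omega\cap\Sigma_\lambda)^\lambda\subset\Omega$, reflection invariance of the kernel shows that $u_\lambda$ is a local weak solution of \eqref{main-anti} in $(\Omega\cap\Sigma_\lambda)^\lambda\cap\Omega$, and in particular in a neighbourhood of $B_R(x_0)$. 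The candidate is
\[
\bu:=u \ \text{ in }\Rn,\qquad \bu_\lambda:=u_\lambda \ \text{ in } \Rn\setminus B_R^\lambda(x_0),\qquad \bu_\lambda:=u \ \text{ in } B_R^\lambda(x_0).
\]

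Items (i)--(iv) are checked pointwise. (i) and (iv) hold by construction. For (ii), take $x\in\Sigma_\lambda\setminus B_R(x_0)$. Then $x^\lambda\notin B_R^\lambda(x_0)$, so $\bu_\lambda(x^\lambda)=u_\lambda(x^\lambda)=u(x)=\bu(x)$. Moreover $x\notin B_R^\lambda(x_0)$ because $x\in\Sigma_\lambda$, hence $(\bu_\lambda-\bu)(x^\lambda)=u(x)-u(x^\lambda)=-(u_\lambda-u)(x)=-(\bu_\lambda-\bu)(x)$. For (iii), on $\Sigma_\lambda$ nothing was modified, so $\bu_\lambda-\bu=u_\lambda-u\ge0$ there.

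For (v), take $0\le\varphi\in C^\infty_c(B_R(x_0))$. Since $\bu=u$ is an exact solution, the task is to show that $\bu_\lambda$ is a weak supersolution in $B_R(x_0)$. The function $\bu_\lambda$ differs from the solution $u_\lambda$ only on $B_R^\lambda(x_0)$, which is disjoint from $\supp\varphi$. Hence
\[
\langle(-\Delta_p)^s\bu_\lambda,\varphi\rangle-\langle(-\Delta_p)^s u_\lambda,\varphi\rangle
=2\int_{B_R(x_0)}\varphi(x)\int_{B_R^\lambda(x_0)}\big[J_p(u_\lambda(x)-u(y))-J_p(u_\lambda(x)-u_\lambda(y))\big]\frac{\dy\,\dx}{|x-y|^{n+sp}} .
\]
Here the kernel is bounded by $d_0^{-n-sp}$ and every function involved is bounded, so this is a harmless bounded correction.

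The main obstacle is the sign of this correction. Writing $y=z^\lambda$ with $z\in B_R(x_0)$, one has $u(y)=u_\lambda(z)\ge u(z)=u_\lambda(y)$, so the correction is $\le0$, which is the wrong direction. One can check that every single-ball modification faces the same sign problem: any value placed on $B_R^\lambda$ would need to be simultaneously $\ge u$ and $\le u_\lambda$ there, and $u\ge u_\lambda$ there. My plan is therefore to compensate the loss rather than remove it. By the mean value inequality \eqref{EL2.6D}, the boundedness of $u$, and the lower bound $|x-y|\ge d_0$, the correction is at least $-C\,d_0^{-n-sp}\int_{B_R(x_0)}(u_\lambda-u)(z)\,\dz\,\int\varphi$. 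I would absorb this into the right-hand side through the local Lipschitz constant of $f$, by working with $f(\bu_\lambda)-f(\bu)$ plus a term proportional to $\int_{B_R}w$.

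If that absorption is not admissible in the form (v) states, the fallback is to enlarge the modified region. Concretely, I would redefine $\bu$ on $B_R^\lambda(x_0)$ as well, choosing its values so that the antisymmetric pairing $y\leftrightarrow y^\lambda$ in the difference $(-\Delta_p)^s\bu_\lambda-(-\Delta_p)^s\bu$ produces the gain from $|x-y|<|x-y^\lambda|$ for $x,y\in\Sigma_\lambda$. This gain must dominate the loss for $R$ small, because the loss is $O(R^n)$ while the gain comes from all of $\Sigma_\lambda\setminus B_R$, where $u_\lambda-u\not\equiv0$.
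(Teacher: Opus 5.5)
\textbf{There is a genuine gap: for your construction, (v) is false whenever $w=u_\lambda-u\not\equiv0$ on $B_R(x_0)$, and neither of your remedies repairs it.} Your construction is the paper's construction with its essential ingredient removed. You correctly found that the change on $B_R^\lambda(x_0)$ has the wrong sign.

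\emph{The absorption idea.} Absorbing the error through the Lipschitz constant of $f$ changes statement (v), which allows no error term. Moreover, at points with $w(x)=0$, such as $x_0$, the right-hand side $f(\bu_\lambda(x))-f(\bu(x))$ vanishes, so there is nothing to absorb into. An additive error of size $C\int_{B_R}w$ would also not survive the test function $\phi^2/(w+\varepsilon)$ used later in Lemma~\ref{L3.2}.

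\emph{The fallback.} Suppose you redefine $\bu$ on $B_R^\lambda(x_0)$ as some $g$; by (iv) this forces $\bu_\lambda=g$ there too. For $x\in B_R(x_0)$, the quantity $(-\Delta_p)^s\bu_\lambda(x)-(-\Delta_p)^s\bu(x)-[f(u_\lambda(x))-f(u(x))]$ equals
\[
\int_{B_R^\lambda(x_0)}\Big[J_p(u_\lambda(x)-g(y))-J_p(u(x)-g(y))-J_p(u_\lambda(x)-u_\lambda(y))+J_p(u(x)-u(y))\Big]\frac{\dy}{|x-y|^{n+sp}}.
\]
When $w(x)=0$ this reduces to $\int_{B_R^\lambda(x_0)}[J_p(u(x)-u(y))-J_p(u(x)-u_\lambda(y))]\,|x-y|^{-n-sp}\dy$. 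That is strictly negative regardless of $g$, because $u\ge u_\lambda$ on $B_R^\lambda(x_0)$.

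\emph{No free gain.} There is also no ``gain from $\Sigma_\lambda\setminus B_R$'' to exploit. Both $u$ and $u_\lambda$ solve the equation exactly near $B_R(x_0)$. If nothing is changed away from the balls, the pairing $y\leftrightarrow y^\lambda$ only reproduces $f(u_\lambda)-f(u)$ and leaves no surplus to dominate the loss.

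\textbf{The missing idea is a perturbation away from the two balls that creates a surplus independent of $R$.}

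\emph{Construction.} Pick a compact $K\Subset\Sigma_\lambda$ with $K\cap\bar B_R(x_0)=\emptyset$ and $w\ge\varrho>0$ on $K$. This is possible since $w\gneq0$ in $\Sigma_\lambda$; then take $R<\dist(x_0,K)$. Set
\begin{itemize}
\item $\bu=u-\frac{\varrho}{8}\mathbbm{1}_{K^\lambda}$;
\item $\bu_\lambda=u_\lambda-\frac{\varrho}{8}\mathbbm{1}_K$ outside $B_R^\lambda(x_0)$, and $\bu_\lambda=\bu$ on $B_R^\lambda(x_0)$.
\end{itemize}
The matched pair of perturbations preserves (ii), and $w\ge\varrho$ on $K$ preserves (iii).

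\emph{Where the gain comes from.} For $x\in B_R(x_0)$, lowering $u_\lambda$ on $K$ raises $(-\Delta_p)^s\bu_\lambda(x)$ with kernel $|x-y|^{-n-sp}$. Lowering $u$ on $K^\lambda$ raises $(-\Delta_p)^s\bu(x)$ only with the smaller kernel $|x-y^\lambda|^{-n-sp}$. Combined with $J_p(a+\frac{\varrho}{8})-J_p(a)\ge C(\frac{\varrho}{8})^{p-1}$, this gives a net gain of at least $(\kappa_1\varrho^{p-1}-\kappa_2\sup_{B_R}|w|)|K|$, with $\kappa_1,\kappa_2$ independent of $R$. The $\kappa_2$ term accounts for $u_\lambda(x)\neq u(x)$.

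\emph{Closing.} Since $w(x_0)=0$, shrinking $R$ (using continuity of $w$) makes this fixed surplus dominate both the $\kappa_2$ cross term and the $B_R^\lambda$ loss, which is bounded below by $-C\sup_{B_R}|w|\,R^n$.
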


\begin{proof}
For the sake of simplicity we assume that $x_0=0$. Since $u_\lambda-u\gneq$ in $\Sigma_\lambda$, we can find a compact set $K\Subset \Sigma_\lambda$ and $R\in (0, R_0)$ small enough so that
$K \cap \bar{B}_{R} (x_0) = \emptyset$ and $w=u_\lambda - u \geq \varrho>0$ in $K$ for some constant $\varrho$. Define
\begin{align*}
\tilde u_\lambda(x) &= u_\lambda(x) -\frac{\varrho}{8} \mathbbm{1}_{K}(x),
\\
\tilde u(x) &= u(x) -\frac{\varrho}{8} \mathbbm{1}_{K^\lambda}(x).
\end{align*}
Define $\bu := \tilde u$ and 
\[
\bu_\lambda:= 
\left\{\begin{array}{ll}
\tilde u & \text{in}\; B_R^\lambda,
 \\[2mm]
 \tilde u_\lambda & \text{ otherwise} .
 \end{array}
\right.
\] 
It is easy to verify that $(i)-(iv)$ are satisfied. So we only prove $(v)$. In what follows, we check $(v)$ in the pointwise sense, and since the function $u, u_\lambda$ have been modified in sets
disjoint from $\bar{B}_R$ to produce $\bu$ and $\bu_\lambda$, one can easily verify that the inequality also holds in the weak sense.

First we show that there exist constants $\kappa_1, \kappa_2$, independent of $R$, satisfying
for $x \in B_R \subset B_{R_0}$ 
\begin{equation}\label{EL3.1A}
(-\Delta_p)^s \tilde u_\lambda(x) - (-\Delta_p)^s \tilde u(x) \geq f(\tilde u_{\lambda}(x)) - f(\tilde u(x)) + \left(\kappa_1 \varrho^{p-1} - \kappa_2 \sup_{B_R} |u_\lambda - u|\right) |K|.
\end{equation}
Indeed, since $u$ and $u_\lambda$ satisfy \eqref{main-anti} in $B_R$, for $x\in B_R$ we get
\begin{align*}
&(-\Delta_p)^s \tilde u_\lambda(x) - (-\Delta_p)^s \tilde u(x) 
\\
&= [(-\Delta_p)^s \tilde u_\lambda(x)-(-\Delta_p)^s u_\lambda(x)] - [(-\Delta_p)^s \tilde u(x)-(-\Delta_p)^s u(x)] 
 + f(u_\lambda(x)) - f(u(x))
\\
&= \int_K [J_p(u_\lambda(x)-u_\lambda(y)+\frac{\varrho}{8})-J_p(u_\lambda(x)-u_\lambda(y))]\frac{\dy}{|x-y|^{n+sp}}
\\
&\quad - \int_{K^\lambda} [J_p(u(x)-u(y)+\frac{\varrho}{8})-J_p(u(x)-u(y))]\frac{\dy}{|x-y|^{n+sp}}
 + f(\bu_\lambda(x)) - f(\bu(x))
\\
&= \int_K [J_p(u_\lambda(x)-u_\lambda(y)+\frac{\varrho}{8})-J_p(u_\lambda(x)-u_\lambda(y))]\frac{\dy}{|x-y|^{n+sp}}
\\
&\quad - \int_{K} [J_p(u(x)-u_\lambda(y)+\frac{\varrho}{8})-J_p(u(x)-u_\lambda(y))]\frac{\dy}{|x-y^\lambda|^{n+sp}}
+ f(\bu_\lambda(x)) - f(\bu(x))
\\
&= \underbrace{\int_K [J_p(u_\lambda(x)-u_\lambda(y)+\frac{\varrho}{8})-J_p(u_\lambda(x)-u_\lambda(y))]\left(\frac{1}{|x-y|^{n+sp}}-\frac{1}{|x-y^\lambda|^{n+sp}}\right)\dy}_{I_1}
\\
&+ \underbrace{ \int_{K} [J_p(u_\lambda(x)-u_\lambda(y)+\frac{\varrho}{8})-J_p(u(x)-u_\lambda(y)+\frac{\varrho}{8})+J_p(u(x)-u_\lambda(y))-J_p(u_\lambda(x)-u_\lambda(y))]\frac{\dy}{|x-y^\lambda|^{n+sp}}}_{=I_2}
\\
&\qquad + f(\bu_\lambda(x)) - f(\bu(x)).
\end{align*}
Since $\dist(B_{R_0}, K^\lambda)>0$, we have 
$$
\inf_{x\in B_{R_0}}\inf_{y\in K}|x-y^\lambda|>0 \quad \text{and}\quad \inf_{x\in B_{R_0}}\inf_{y\in K}\left(\frac{1}{|x-y|^{n+sp}}-\frac{1}{|x-y^\lambda|^{n+sp}}\right)>0.
$$
Using \eqref{EL2.6D} we also have
\begin{align*}
|J_p(u_\lambda(x)-u_\lambda(y)+\frac{\varrho}{8})-J_p(u(x)-u_\lambda(y)+\frac{\varrho}{8})| & \leq C|u_\lambda(x)-u(x)|\leq C \sup_{B_R}|u_\lambda-u|
\\
|J_p(u(x)-u_\lambda(y))-J_p(u_\lambda(x)-u_\lambda(y))|& \leq C|u_\lambda(x)-u(x)|\leq C \sup_{B_R}|u_\lambda-u|,
\end{align*}
where the constant $C$ depends on the $L^\infty$ norm of $u$. Hence $I_2\geq -C |K| \sup_{B_R}|u_\lambda-u|$ for some constant $C$, independent of $R$. Again, by \eqref{EL2.6D}, we see that for
$b>a$
$$J_p(b)-J_p(a)\geq C_2 (|a|+|b|)^{p-2}(b-a)\geq C_2 (|b-a|)^{p-2}(b-a)=C_2 (b-a)^{p-1}.$$
Thus, for some constant $\kappa_1$, we would have $I_1\geq \kappa_1 \varrho^{p-1}|K|$. Combining these estimates we get \eqref{EL3.1A}.

Next we compute, using \eqref{EL2.6D}, that
\begin{align}\label{EL3.1B}
(-\Delta_p)^s \bu_\lambda(x) - (-\Delta_p)^s \tilde u_\lambda (x)&=\int_{B^\lambda_R}[J_p(u_\lambda(x)-u(y))-J_p(u_\lambda(x)-u_\lambda(y))]\frac{\dy}{|x-y|^{n+sp}}\nonumber
\\
&\geq -C_3 \sup_{B_R}|u_\lambda-u| R^n,
\end{align}
where the constant $C_3$ depends on the $L^\infty$ norm of $u$ and $R_0$, and in the last line we us the fact that
$$ \inf_{x\in B_{R_0}}\inf_{y\in B^\lambda_{R_0}}|x-y|>0.$$
Since $u_\lambda(0)-u(0)=0$, choosing $R$ small enough, depending on $\varrho$ and $|K|$, so that 
$$\left(\kappa_1 \varrho^{p-1} - \kappa_2 \sup_{B_R} |u_\lambda - u|\right) |K|-C_3 \sup_{B_R}|u_\lambda-u| R^n>0,$$
we establish $(v)$ from \eqref{EL3.1A} and \eqref{EL3.1B}. Hence the proof.
\end{proof}

With the help of Lemma~\ref{L3.1}, we can now produce analogous version of Lemmas~\ref{L2.6} and ~\ref{L2.7} for the reflected solutions. Let $u\in W^{s, p}_{\rm loc}(\Omega)\cap L^\infty(\Rn)$
be a solution to \eqref{main-anti} which is in $C^1(\Omega)$. As before, we define $\mathcal{Z}_u=\{x\in \Omega\; :\; \grad u(x)=0\}$. Let $\lambda\in \R$ be such that
$\Omega\cap\Sigma_\lambda\neq \emptyset$ and $(\Omega\cap\Sigma_\lambda)^\lambda\subset\Omega$. Now we consider a point $x_0\in \Omega\cap\Sigma_\lambda\setminus\mathcal{Z}_u$.
Again, due to simplicity, we let $x_0=0$.
It can be easily seen that the reasoning of Section~\ref{S-SCP} (see \eqref{cone-est}) goes through giving us $R>0$ small enough so that 
$B_R\Subset (\Omega\cap\Sigma_\lambda)\setminus\mathcal{Z}_u$, $|\xi|=1$, and
\begin{equation}\label{cone-anti}
|u(x)-u(y)|\geq {\upkappa}|x-y|\quad \text{for}\; x-y\in \cK, x, y\in B_R, \quad \text{where}\quad 
\cK=\{z\in\Rn\; :\; z\neq 0, \; |\langle {z}/{|z|}, \xi\rangle|\geq \theta\},
\end{equation}
for some $\upkappa>0$ and $\theta\in (0, 1)$. We can even choose $R$ small enough so that the conclusion of Lemma~\ref{L3.1} holds, provided $u_\lambda\gneq u$ in $\Sigma_\lambda$.

\begin{lem}\label{L3.2}
Let $p>2$, $\frac{sp}{p-2} >1$, and $\lambda$ be such that the conditions of Lemma~\ref{L3.1} hold. Also, assume that $u\in C^1(\Omega)$, $|\grad u(x_0)|\neq 0$ and choose $R$ so that \eqref{cone-anti} 
and Lemma~\ref{L3.1} hold.
Suppose that there exist $\sigma \in (0,1], k>0,$ such that for $w =\bu_\lambda-\bu$ we have
\begin{equation}\label{EL3.2A}
|B_r \cap \{ w \geq k\}| \geq \sigma |B_r|
\end{equation} 
for some $r>0$ satisfying $\max \{ r^{(1-\frac{p-2}{sp})(p-1)}, 16r \} <R$ . Then there exists a constant $\tilde c$, independent of $\sigma, \delta, r$ and $k$, such that
\begin{equation*}
\left| B_{6r} \cap \{w \leq 2 \delta k \} \right| \leq \dfrac{\tilde c}{\sigma \log \frac{1}{2\delta}} |B_{6r}|
\end{equation*}
for any $\delta \in (0, \frac{1}{4})$.
\end{lem}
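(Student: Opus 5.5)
The plan is to repeat the proof of Lemma~\ref{L2.6} with the pair $(\bu,\bu_\lambda)$ from Lemma~\ref{L3.1} in place of $(u,v)$. By Lemma~\ref{L3.1}(v), $\bu_\lambda$ is a local weak supersolution and $\bu$ a local weak subsolution of the same Lipschitz-type inequality in $B_R$. Indeed, testing with $\eta=\phi^2/\tilde w$, where $\tilde w=w+\varepsilon$, $\phi\in C^\infty_c(B_{7r})$, $\phi=1$ on $B_{6r}$, gives
\[
\int_{\Rn}\int_{\Rn}\big[J_p(\bu(x)-\bu(y))-J_p(\bu_\lambda(x)-\bu_\lambda(y))\big]\big(\eta(x)-\eta(y)\big)\dnu\leq 2\int_{B_{7r}}\big(f(\bu)-f(\bu_\lambda)\big)\eta\dx\leq Cr^n .
\]
Here $\eta\in\bX(B_R)$ because $\tilde w\ge\varepsilon$ in $B_{7r}$, since $w\ge0$ in $\Sigma_\lambda\supset B_R$ by Lemma~\ref{L3.1}(iii). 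On $B_R$ we have $\bu=u$ and $\bu_\lambda=u_\lambda$. Both are Lipschitz in $B_R$ because $u\in C^1(\Omega)$ and $B_R^\lambda\Subset\Omega$, and $\bu$ satisfies the cone estimate \eqref{cone-anti}. Hence \eqref{EL2.6E} holds for $\beta(x,y)=(|\bu(x)-\bu(y)|+|\bu_\lambda(x)-\bu_\lambda(y)|)^{p-2}$ on $B_R\times B_R$, with $\upkappa/4$ replaced by $\upkappa$.

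I will split the left side into $I_1+I_2+I_3$ exactly as in Lemma~\ref{L2.6}. The local term $I_1$ is treated word for word: Lemma~\ref{lem2.2} with $\epsilon=\gamma(w(y)-w(x))/\tilde w(y)$, then Lemma~\ref{lem2.3}. This only uses $\tilde w>0$ and the two-sided bound on $\beta$ inside $B_{8r}\subset B_R$, and it yields \eqref{Fu-A}. The tail terms $I_2,I_3$ are the one place where the argument changes. In Lemma~\ref{L2.6} the sign information "$w(x)>w(y)\Rightarrow$ the bracket is bounded below by $-C_1\beta(w(x)-w(y))$" was used together with $w(y)\geq0$ to bound $\frac{w(x)-w(y)}{\tilde w(x)}\le 1$. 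Now $w=\bu_\lambda-\bu$ is negative in $\Sigma_\lambda^c$, being antisymmetric by Lemma~\ref{L3.1}(ii). This is the main obstacle.

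For $y\in\Sigma_\lambda\setminus B_{8r}$ the previous bound works unchanged. For $y\in\Sigma_\lambda^c$ I will not use the $w$-ordering at all. Instead I will bound the bracket crudely by
\[
\big|J_p(\bu(x)-\bu(y))-J_p(\bu_\lambda(x)-\bu_\lambda(y))\big|\le C\big(1+|\bu(y)|+|\bu_\lambda(y)|\big)^{p-1},
\]
using $\bu,\bu_\lambda\in L^\infty$. Since $\phi^2/\tilde w\le 1/\tilde w$ has no good bound, I will exploit that $w(x)$ is small only where the bracket is favorable. Concretely, I will pair the contributions of $y$ and $y^\lambda$ for $y\in\Sigma_\lambda\setminus B_{8r}$ and use $|x-y|\le|x-y^\lambda|$ for $x\in B_R\subset\Sigma_\lambda$. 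The identities $\bu_\lambda(y^\lambda)=\bu(y)$ and $\bu(y^\lambda)=\bu_\lambda(y)$ from Lemma~\ref{L3.1}(ii) then give a combined integrand
\begin{align*}
&\big[J_p(\bu(x)-\bu(y))-J_p(\bu_\lambda(x)-\bu_\lambda(y))\big]\Big(\frac{1}{|x-y|^{n+sp}}-\frac{1}{|x-y^\lambda|^{n+sp}}\Big)\\
&\quad+\Big[J_p(\bu(x)-\bu(y))-J_p(\bu_\lambda(x)-\bu_\lambda(y))+J_p(\bu(x)-\bu_\lambda(y))-J_p(\bu_\lambda(x)-\bu(y))\Big]\frac{1}{|x-y^\lambda|^{n+sp}}.
\end{align*}
In the second bracket, the terms with $w(y)\ge w(x)$ and $\bu_\lambda(y)\ge\bu(y)$ are handled by monotonicity of $J_p$. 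What remains is controlled by $C_1\beta(w(x)-w(y))_+$ or by a term $\lesssim w(x)$. Multiplied by $\phi^2(x)/\tilde w(x)$, each is bounded by a constant. This reduces the tail to $\int_{B_{7r}}\int_{B_{8r}^c}(1+|\bu|+|\bu_\lambda|)^{p-1}|x-y|^{-n-sp}\dy\dx$ plus the near part handled by Lipschitz bounds. With $r^{(1-\frac{p-2}{sp})(p-1)}<R$ this gives $I_2+I_3\ge -Cr^{n-2\alpha}$ as before.

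Once \eqref{EL2.6G} holds for $w=\bu_\lambda-\bu$, the rest is identical to Lemma~\ref{L2.6}. I will truncate to $\widehat w=[\min\{\log\frac1{2\delta},\log\frac{k+\varepsilon}{\tilde w}\}]_+$, pass from $\beta$ to $\beta_l$ and then to $\tilde\beta_l$, and apply the cone-kernel comparison \eqref{DK}. The fractional Poincar\'e inequality then gives \eqref{EL2.6H}, and the measure argument using \eqref{EL3.2A} finishes the proof after letting $\varepsilon\to0$.
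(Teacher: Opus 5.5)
Your proposal is correct and follows the paper's proof. The only genuinely new point is the tail over $\Sigma_\lambda^c$, where $w$ can be negative, and the paper handles it exactly as you do: it pairs $y\in\Sigma_\lambda\setminus B_R$ with $y^\lambda$ via Lemma~\ref{L3.1}(ii). This produces a term with the nonnegative kernel difference $|x-y|^{-n-sp}-|x-y^\lambda|^{-n-sp}$, controlled by $-C_1\beta\, w(x)$ since $w\ge 0$ on $\Sigma_\lambda$, plus a combined bracket bounded below by $-C\,w(x)$.

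One small correction: the identity $\bar u_\lambda(y^\lambda)=\bar u(y)$ fails for $y\in B_R$, so the pairing should run over $\Sigma_\lambda\setminus B_R$ only. The regions $B_R\setminus B_{8r}$ and $B_R^\lambda$ should be treated directly as in Lemma~\ref{L2.6}, which is harmless because $w\ge 0$ on $B_R$ and $w=0$ on $B_R^\lambda$ by Lemma~\ref{L3.1}(iii),(iv).
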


\begin{proof}
We broadly follow the proof of Lemma~\ref{L2.6} and only detail those steps that require modification. Let $\phi$ be a smooth cutoff function with support in $B_{7r}$, $\phi \equiv 1$ in $B_{6r}$ and 
$|\grad \phi| \leq \frac{C}{r}$. Let $\varepsilon >0$. Let $\tilde \bu_\lambda = \bu_\lambda +\varepsilon$, $\tilde w = w + \varepsilon$ and define $\eta=\frac{\phi^2}{\tilde w}$
By Lemma~\ref{L3.1}(iii), $\eta$ is well-defined a valid test function. Using $\eta$ as test function in Lemma~\ref{L3.1}(v),  we get
\begin{align*}
\int_{\Rn} \int_{\Rn} \left[ J_p(\bu(x) - \bu(y)) - J_p(\bu_\lambda(x) - \bu_\lambda(y)) \right]\left( \frac{\phi^2(x)}{\tilde w (x)} - \frac{\phi^2(y)}{\tilde w (y)}\right) d \nu \leq C r^n,
\end{align*}
where $C$ depends on the local Lipschitz constant of $f$. Again,
\begin{align*}
{\rm LHS} &= \underbrace{\int_{B_{8r}} \int_{B_{8r}} \left[ J_p(\bu(x) - \bu(y)) - J_p(\bu_\lambda(x) - \bu_\lambda(y)) \right]\left( \frac{\phi^2(x)}{\tilde w (x)} - \frac{\phi^2(y)}{\tilde w (y)}\right) \dnu}_{I_1} 
\\
&\quad \underbrace{+ \int_{B_{8r}^c} \int_{B_{8r}} \left[ J_p(\bu(x) - \bu(y)) - J_p(\bu_\lambda(x) - \bu_\lambda(y)) \right]\left( \frac{\phi^2(x)}{\tilde w (x)} \right)  \dnu }_{I_2}
\\
&\qquad \underbrace{- \int_{B_{8r}} \int_{B_{8r}^c} \left[ J_p(\bu(x) - \bu(y)) - J_p(\bu_\lambda(x) - \bu_\lambda(y)) \right]\left( \frac{\phi^2(y)}{\tilde w (y)} \right) \dnu }_{I_3}.
\end{align*}
Estimate of $I_1$ is same as that in Lemma~\ref{L2.6} (see \eqref{Fu-A}), giving us
$$I_1 \geq  C \int_{B_{6r}} \int_{B_{6r}} \left| \log \left( \frac{\tilde w (x) }{\tilde w (y)} \right) \right|^2 \beta(x,y) \dnu  -C r^{n-2\alpha},$$
with $\beta(x, y)=(|\bu(x)-\bu(y)|+|\bu_\lambda(x)-\bu_\lambda(y)|)^{p-2}$ (see \eqref{beta}).
Let us now estimate $I_2$.
\begin{align*}
I_2 &= \int_{B_{8r}^c} \int_{B_{8r}} \left[ J_p(\bu(x) - \bu(y)) - J_p(\bu_\lambda(x) - \bu_\lambda(y)) \right] \frac{\phi^2(x)}{\tilde w (x)}  \frac{\dx\dy}{|x-y|^{n+sp}} 
\\
&= \int_{B_{R}^c \cap \Sigma_\lambda} \int_{B_{8r}} \left[ J_p(\bu(x) - \bu(y)) - J_p(\bu_\lambda(x) - \bu_\lambda(y)) \right] \frac{\phi^2(x)}{\tilde w (x)}  \frac{\dx\dy}{|x-y|^{n+sp}} 
\\
&\quad + \int_{\Sigma_\lambda^c \setminus B_R^\lambda} \int_{B_{8r}} \left[ J_p(\bu(x) - \bu(y)) - J_p(\bu_\lambda(x) - \bu_\lambda(y)) \right] \frac{\phi^2(x)}{\tilde w (x)}  \frac{\dx\dy}{|x-y|^{n+sp}}  
\\
&\qquad + \underbrace{\int_{ B_R^\lambda} \int_{B_{8r}} \left[ J_p(\bu(x) - \bu(y)) - J_p(\bu_\lambda(x) - \bu_\lambda(y)) \right] \frac{\phi^2(x)}{\tilde w (x)}  \frac{\dx\dy}{|x-y|^{n+sp}} 
}_{I_{2,2}} 
\\
&\quad\qquad + \underbrace{\int_{ B_R \setminus B_{8r}} \int_{B_{8r}} \left[ J_p(\bu(x) - \bu(y)) - J_p(\bu_\lambda(x) - \bu_\lambda(y)) \right] \frac{\phi^2(x)}{\tilde w (x)}  \frac{\dx\dy}{|x-y|^{n+sp}} 
}_{I_{2,3}}
\\
&:= I_{2,1} + I_{2,2} + I_{2,3},
\end{align*}
where $I_{2,1}$ represent sum of the first two terms in the second equality. Using Lemma~\ref{L3.1}(iv), we compute
\begin{align*}
I_{2,2} &\geq \int_{ B_R^\lambda} \int_{B_{8r}} \left[ J_p(\bu(x) - \bu(y)) - J_p(\bu_\lambda(x) - \bu(y)) \right]\left( \frac{\phi^2(x)}{\tilde w (x)} \right) \mathbbm{1}_{\{w(x)> w(y)\}} (x,y)
\frac{\dx\dy}{|x-y|^{n+sp}}
\\
& \geq -C \int_{ B_R^\lambda} \int_{B_{8r}} w(x) \left( \frac{\phi^2(x)}{\tilde w (x)} \right) \mathbbm{1}_{\{w(x)> w(y)\}} (x,y)\frac{\beta(x,y)\dx\dy}{|x-y|^{n+sp}}
\\
&\geq -Cr^{n-2\alpha},
\end{align*}
where last inequality is obtained by a similar calculation as we did in estimating $I_2$ in Lemma \ref{L2.6} and $2\alpha=sp-(p-2)$. Also,
\begin{align*}
I_{2,3} &\geq \int_{ B_R \setminus B_{8r}} \int_{B_{8r}} \left[ J_p(\bu(x) - \bu(y)) - J_p(\bu_\lambda(x) - \bu_\lambda(y)) \right]\left( \frac{\phi^2(x)}{\tilde w (x)} \right) \mathbbm{1}_{\{w(x)> w(y)\}} (x,y)\frac{\dx\dy}{|x-y|^{n+sp}}
\\
& \geq -C \int_{ B_R \setminus B_{8r}} \int_{B_{8r}} [w(x)-w(y)] \left( \frac{\phi^2(x)}{\tilde w (x)} \right) \mathbbm{1}_{\{w(x)> w(y)\}} (x,y)\frac{\beta(x,y)\dx\dy}{|x-y|^{n+sp}}
\\& \geq -C \int_{ B_R \setminus B_{8r}} \int_{B_{8r}} w(x) \left( \frac{\phi^2(x)}{\tilde w (x)} \right) \mathbbm{1}_{\{w(x)> w(y)\}} (x,y)\frac{\beta(x,y)\dx\dy}{|x-y|^{n+sp}}
\\
&\geq -Cr^{n-2\alpha},
\end{align*}
which is same as in Lemma~\ref{L2.6}. For $I_{2,1}$ we use the antisymmetric property  to compute
\begin{align*}
I_{2,1}&= \int_{B_{R}^c \cap \Sigma_\lambda} \int_{B_{8r}} \left[ J_p(\bu(x) - \bu(y)) - J_p(\bu_\lambda(x) - \bu_\lambda(y)) \right]\left( \frac{\phi^2(x)}{\tilde w (x)} \right) \frac{\dx\dy}{|x-y|^{n+sp}} 
\\
&\quad + \int_{B_{R}^c \cap \Sigma_\lambda} \int_{B_{8r}} \left[ J_p(\bu(x) - \bu_\lambda(y)) - J_p(\bu_\lambda(x) - \bu(y)) \right]\left( \frac{\phi^2(x)}{\tilde w (x)} \right) \frac{\dx\dy}{|x-y^\lambda|^{n+sp}} 
\\
&= \int_{B_{R}^c \cap \Sigma_\lambda} \int_{B_{8r}} \left[ \frac{1}{|x-y|^{n+sp}} -\frac{1}{|x-y^\lambda|^{n+sp}}\right] \left[ J_p(\bu(x) - \bu(y)) - J_p(\bu_\lambda(x) - \bu_\lambda(y)) \right]\left( \frac{\phi^2(x)}{\tilde w (x)} \right) \dx\dy
\\
&\qquad  + \int_{B_{R}^c \cap \Sigma_\lambda} \int_{B_{8r}} \Bigl[ J_p(\bu(x) - \bu_\lambda(y))- J_p(\bu_\lambda(x) - \bu_\lambda(y) 
 \\
 &\hspace{8em} + J_p (\bu(x) - \bu(y)) - J_p(\bu_\lambda(x) - \bu(y)) \Bigr]\left( \frac{\phi^2(x)}{\tilde w (x)} \right) \frac{\dx\dy}{|x-y^\lambda|^{n+sp}} 
 \\
 &\geq   -C\int_{B_{R}^c \cap \Sigma_\lambda} \int_{B_{8r}} \left[ \frac{1}{|x-y|^{n+sp}} -\frac{1}{|x-y^\lambda|^{n+sp}} \right] \left[ w(x) \beta(x,y) \right] \mathbbm{1}_{\{w(x)> w(y)\}}\left( \frac{\phi^2(x)}{\tilde w (x)} \right) \dx\dy
\\
&\qquad   -C\norm{u}^{p-2}_{\infty}\int_{B_{R}^c \cap \Sigma_\lambda} \int_{B_{8r}} w(x)  \left( \frac{\phi^2(x)}{\tilde w (x)} \right) \frac{\dx\dy}{|x-y^\lambda|^{n+sp}} 
\\
 &\geq   -C\int_{B^c_{R}} \int_{B_{8r}} \left[ \frac{1}{|x-y|^{n+sp}}  \right] \left[ w(x) \beta(x,y) \right] \left( \frac{\phi^2(x)}{\tilde w (x)} \right) \dx\dy- Cr^n
 \\
 &\geq -Cr^{n-2\alpha},
\end{align*}
where the constant $C$ would depend on the local Lipschitz norm of $u, u_\lambda$ in $B_R$, the $L^\infty$ norm of $u$ and $\dist(B_R, T_\lambda)$. 
Thus, combining the above, we would have $I_2 \geq -C r^{n-2\alpha}$. A similar calculation will also give $I_3 \geq -Cr^{n-2\alpha}$. Therefore, we have recovered \eqref{EL2.6G}. Define $\widehat{w} := \left[ \min \{ \log\frac{1}{2\delta}, \log \frac{k+\varepsilon }{\tilde w} \} \right]_+$. Note that $\tilde w >0$ in $B_{6r}$. Therefore, $\widehat{w}$ is well-defined. 
Now we continue along the lines of Lemma \ref{L2.6} to conclude the proof. 
\end{proof}

Next lemma should be compared with Lemma~\ref{L2.7} and the proof of Theorem~\ref{strongcomp}.
\begin{lem}\label{L3.3}
Let $p>2$, $\frac{sp}{p-2} >1$, and $\lambda$ be such that the conditions of Lemma~\ref{L3.1} hold. Also, assume that $u\in C^1(\Omega)$, $|\grad u(x_0)|\neq 0$ and choose $R$ so that \eqref{cone-anti} 
and Lemma~\ref{L3.1} hold. Then there exists $r>0$ satisfying $\max \{ r^{(1-\frac{p-2}{sp})(p-1)}, 16r \} <R$ such that $u_\lambda=u$ in $B_r(x_0)$.
\end{lem}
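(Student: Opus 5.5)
Under the hypotheses of Lemma~\ref{L3.1} (that is, $u_\lambda\ge u$ in $\Sigma_\lambda$, $u_\lambda(x_0)=u(x_0)$, and $u_\lambda-u\not\equiv0$ in $\Sigma_\lambda$), we argue by contradiction. The idea is to repeat the proof of Theorem~\ref{strongcomp}, but for the modified pair $\bu,\bu_\lambda$ from Lemma~\ref{L3.1}, with Lemma~\ref{L3.2} replacing Lemma~\ref{L2.6}.

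Take $x_0=0$ and fix $R$ so that \eqref{cone-anti} and Lemma~\ref{L3.1} hold. Pick $r$ with $\max\{r^{(1-\frac{p-2}{sp})(p-1)},16r\}<R$, and small enough that $\upgamma:=\smallfint_{B_r}w\,\dx<1$, where $w=\bu_\lambda-\bu$. By Lemma~\ref{L3.1}(i), $w=u_\lambda-u$ in $B_R$, and by Lemma~\ref{L3.1}(iii), $w\ge0$ in $\Sigma_\lambda$. Suppose $\upgamma>0$. With $k=\upgamma/2$ and $M=\sup_{B_r}w$, the same averaging argument as in \eqref{ET2.1A} gives $|B_r\cap\{w\ge k\}|\ge\sigma|B_r|$ with $\sigma=\upgamma/(4M)$. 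This is exactly hypothesis \eqref{EL3.2A} of Lemma~\ref{L3.2}.

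Next comes the analogue of Lemma~\ref{L2.7} for $\bu,\bu_\lambda$. We test the inequality of Lemma~\ref{L3.1}(v) with $(\ell-w)_+\phi^2$, where $\phi$ is supported in $B_\rho\subset B_{6r}$. The local part $I_1$ is handled exactly as in the claim \eqref{EL2.7C}. That claim only uses \eqref{EL2.6D}, the cone estimate (now \eqref{cone-anti}), and the Lipschitz bounds on $B_R$, and $\bu=u$, $\bu_\lambda=u_\lambda$ there. The new work is in the tail terms $I_2,I_3$. Following the splitting in the proof of Lemma~\ref{L3.2}, we decompose $B_\rho^c$ into $B_R\setminus B_\rho$, $B_R^\lambda$, and the remainder, where antisymmetry holds.
\begin{itemize}
\item On $B_R\setminus B_\rho$ and on $B_R^\lambda$, where $\bu_\lambda=\bu$ by Lemma~\ref{L3.1}(iv), we use $w_-(x)>0\Rightarrow w(x)<\ell$ as in Lemma~\ref{L2.7}. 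This gives a lower bound $-C\ell^2\int\phi^2\mathbbm{1}_{\{w<\ell\}}(\dots)$, with the kernel integrated away from $B_\rho$.
\item On the antisymmetric region, we pair $y$ with $y^\lambda$ as in the estimate of $I_{2,1}$. The kernel difference $|x-y|^{-n-sp}-|x-y^\lambda|^{-n-sp}$ is positive, and the remaining bracket is bounded by $C|u_\lambda(x)-u(x)|\le C\ell$ on $\{w<\ell\}$. This again yields a contribution of order $\ell^2|B_\rho\cap\{w<\ell\}|$ times powers of $r^{-1}$.
\end{itemize}
These bounds recover \eqref{EL2.7F}. The De Giorgi iteration with Lemma~\ref{L2.5} and Lemma~\ref{lem2.4} then goes through verbatim. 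For the smallness of $A_0$ we use Lemma~\ref{L3.2} with $\delta$ small. The conclusion is $\inf_{B_{4r}}w\ge\delta k>0$, which contradicts $w(0)=0$. Hence $\upgamma=0$, so $w\equiv0$ in $B_r$, that is, $u_\lambda=u$ in $B_r(x_0)$.

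The main obstacle is the tail estimate in this De Giorgi step. In Lemma~\ref{L2.7} the bound $J_p(u(x)-u(y))-J_p(v(x)-v(y))\ge -C(1+|v(y)|)^{p-2}\ell$ came from the global ordering $v\ge u$ in $\Rn$. Here $w$ is negative on $\Sigma_\lambda^c$, so that ordering fails. We must instead use antisymmetry and the positivity of the kernel difference, and we must check that every error term carries the factor $\ell$ (indeed $\ell^2$ after multiplying by $w_-\le\ell$). That factor is what makes the iteration close. We expect this to work because $\bu_\lambda(y^\lambda)=\bu(y)$ lets us rewrite the $\Sigma_\lambda^c$ part as differences of $J_p$ evaluated at arguments differing by $w(x)<\ell$.

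Alternatively, if the hypothesis $u_\lambda-u\not\equiv0$ is dropped, the statement $u_\lambda=u$ in $B_r(x_0)$ is trivial in that case.
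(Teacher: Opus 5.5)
Your proposal is correct and follows the paper's proof essentially step for step. Both use the De Giorgi argument with test function $(\ell-w)_+\phi^2$, split the tail into $B_R\setminus B_\rho$, $B_R^\lambda$ and the antisymmetric region (paired via $y\mapsto y^\lambda$, with the positivity of $|x-y|^{-n-sp}-|x-y^\lambda|^{-n-sp}$ ensuring every error term carries a factor $\ell$), use Lemma~\ref{L3.2} for the smallness of $A_0$, and close with the averaging/contradiction argument from the proof of Theorem~\ref{strongcomp}. One small slip: on $B_R\setminus B_\rho$ you only have $\bu_\lambda\ge\bu$ from Lemma~\ref{L3.1}(iii), not equality from (iv), but that ordering is all the estimate actually uses.
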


\begin{proof}
To apply the proof of Lemma~\ref{L2.7}, we show that if there exist $\sigma \in (0,1], k\in (0, 1)$, such that $w =\bu_\lambda-\bu$ satisfies
\begin{equation}\label{EL3.3A}
|B_r \cap \{ w \geq k\}| \geq \sigma |B_r|
\end{equation} 
for some $r$ satisfying $\max \{ r^{(1-\frac{p-2}{sp})(p-1)}, 16r \} <R$, then there exists a constant $\delta \in (0, \frac{1}{4})$ such that 
\begin{equation}\label{EL3.3B}
\inf_{B_{4r}} (u_\lambda-u)=\inf_{B_{4r}} w \geq \delta k.
\end{equation}
Once \eqref{EL3.3B} is established, we can conclude the lemma arguing as in Theorem~\ref{strongcomp}.

Let $\phi$ be a smooth cutoff function supported in $B_\rho$ for some $r\leq \rho \leq 6r$ and consider the test function $w_- :=(\ell - w)_+$, for some $\ell \in (\delta k, 2 \delta k)$. Then we have,
\begin{align*}
&\int_{\Rn} \int_{\Rn}[J_p(\bu(x) - \bu(y))- J_p(\bu_\lambda(x)- \bu_\lambda(y))](w_-x)\phi^2(x) - w_-(y) \phi^2(y))\frac{\dx\dy}{|x-y|^{n+sp}}
\leq C \ell^2 |B_\rho \cap \{ w < \ell \}|,
\end{align*}
where the right-hand side is computed as in Lemma~\ref{L2.7}. Split the left-hand side as
\begin{align*}
 & \underbrace{\int_{B_{\rho}} \int_{B_{\rho}} \left[ J_p(\bu(x) - \bu(y)) - J_p(\bu_\lambda(x) - \bu_\lambda(y)) \right](w_-(x)\phi^2(x) - w_- (y) \phi^2(y)) \frac{\dx\dy}{|x-y|^{n+sp}}}_{I_1} 
\\
&\quad \underbrace{+ \int_{B_{\rho}} w_-(x) \phi^2(x) dx\int_{B_{\rho}^c} \left[ J_p(\bu(x) - \bu(y)) - J_p(\bu_\lambda(x) - \bu_\lambda(y)) \right] \frac{\dy}{|x-y|^{n+sp}} }_{I_2}
\\
&\qquad \underbrace{-  \int_{B_{\rho}} w_-(y) \phi^2(y) dy\int_{B_{\rho}^c} \left[ J_p(\bu(x) - \bu(y)) - J_p(\bu_\lambda(x) - \bu_\lambda(y)) \right] \frac{\dx}{|x-y|^{n+sp}}  }_{I_3},
\end{align*}
to obtain
\begin{equation}\label{EL3.3C}
I_1+ I_2 + I_3 \leq C \ell^2 |B_\rho \cap \{ w < \ell \}|.
\end{equation}
We mainly need to focus in estimating $I_2$. For $x \in B_\rho$ and $w_-(x) > 0 \Leftrightarrow  \bu_{\lambda}(x) < \ell + \bu(x)$, using Lemma~\ref{L3.1}(ii)-(iv), we see that
\begin{align*}
&\int_{B_{\rho}^c} \left[ J_p(\bu(x) - \bu(y)) - J_p(\bu_\lambda(x) - \bu_\lambda(y)) \right] \frac{\dy}{|x-y|^{n+sp}}
\\
&= \int\limits_{B_{R}^c \cap \Sigma_\lambda} \left[ J_p(\bu(x) - \bu(y)) - J_p(\bu_\lambda(x) - \bu_\lambda(y)) \right] \frac{\dy}{|x-y|^{n+sp}} 
\\
&\quad + \int\limits_{\Sigma_\lambda^c \setminus B_R^\lambda} \left[ J_p(\bu(x) - \bu(y)) - J_p(\bu_\lambda(x) - \bu_\lambda(y)) \right] \frac{\dy}{|x-y|^{n+sp}}
\\
&\qquad +\int\limits_{B_{R}^\lambda} \left[ J_p(\bu(x) - \bu(y)) - J_p(\bu_\lambda(x) - \bu_\lambda(y)) \right] \frac{\dy}{|x-y|^{n+sp}} 
\\
&\qquad \quad+\int\limits_{B_{R} \setminus B_\rho} \left[ J_p(\bu(x) - \bu(y)) - J_p(\bu_\lambda(x) - \bu_\lambda(y)) \right] \frac{\dy}{|x-y|^{n+sp}}
\\
&\geq \int\limits_{B_{R}^c \cap \Sigma_\lambda} \left[ J_p(\bu(x) - \bu(y)) - J_p(\ell + \bu(x) - \bu_\lambda(y)) \right] \frac{\dy}{|x-y|^{n+sp}} 
\\
&\quad + \int\limits_{B_{R}^c \cap \Sigma_\lambda} \left[ J_p(\bu(x) - \bu_\lambda(y)) - J_p(\ell + \bu(x) - \bu(y)) \right] \frac{\dy}{|x-y^\lambda|^{n+sp}}
\\
&\qquad +\int\limits_{B_{R}^\lambda} \left[ J_p(\bu(x) - \bu(y)) - J_p(\ell + \bu(x) - \bu(y)) \right] \frac{\dy}{|x-y|^{n+sp}} 
\\
&\qquad \quad+\int\limits_{B_{R} \setminus B_\rho} \left[ J_p(\bu(x) - \bu(y)) - J_p(\ell + \bu(x) - \bu(y)) \right] \frac{\dy}{|x-y|^{n+sp}}
\\
&\geq \int\limits_{B_{R}^c \cap \Sigma_\lambda} \left[ J_p(\bu(x) - \bu(y)) - J_p(\ell + \bu(x) - \bu_\lambda(y)) \right] \left(\frac{1}{|x-y|^{n+sp}} -\frac{1}{|x-y^\lambda|^{n+sp}} \right) \dy
\\
&\quad + \int\limits_{B_{R}^c \cap \Sigma_\lambda} \Bigl[ J_p(\bu(x) - \bu_\lambda(y))- J_p(\ell + \bu(x) - \bu_\lambda(y))
\\
&\qquad + J_p(\bu(x) - \bu(y))- J_p(\ell + \bu(x) - \bu(y)) \Bigr] \frac{\dy}{|x-y^\lambda|^{n+sp}}
\\
&\qquad +\int\limits_{B_{R}^\lambda} \left[ J_p(\bu(x) - \bu(y)) - J_p(\ell + \bu(x) - \bu(y)) \right] \frac{\dy}{|x-y|^{n+sp}} 
\\
&\qquad \quad+\int\limits_{B_{R} \setminus B_\rho} \left[ J_p(\bu(x) - \bu(y)) - J_p(\ell + \bu(x) - \bu(y)) \right] \frac{\dy}{|x-y|^{n+sp}}
\\
&\stackrel{\eqref{EL2.6D}}{\geq} -C \ell \Biggl[ \int\limits_{B_{R}^c \cap \Sigma_\lambda}  \left[\frac{1}{|x-y|^{n+sp}} -\frac{1}{{|x-y^{\lambda}|}^{n+sp}} \right] \dy
 + \int\limits_{B_{R}^c \cap \Sigma_\lambda}  \frac{\dy}{|x-y^\lambda|^{n+sp}}
\\
&\qquad +\int\limits_{B_{R}^\lambda}  \frac{\dy}{|x-y|^{n+sp}} + \int\limits_{B_{R} \setminus B_\rho}  \frac{\dy}{|x-y|^{n+sp}} \Biggr]
\\
&\geq -C \ell \int\limits_{B_\rho^c}\frac{\dy}{|x-y|^{n+sp}}.
\end{align*}
Therefore,
$$I_2 \geq - C\ell^2 \int_{B_\rho\cap\{w<\ell\}} \phi^2 (x) \int_{B_\rho^c} \frac{\dy}{|x-y|^{n+sp}}
\geq -C \ell^2 \int_{B_\rho\cap\{w<\ell\}} \phi^2 (x) \sup_{x \in \supp \phi} \left( \int_{B_\rho^c} \frac{\dy}{|x-y|^{n+sp}}\right)\dx.
$$
We can estimate $I_3$ similarly. Therefore, we obtain 
$$I_2 + I_3 \geq -C \ell^2 \int_{B_\rho\cap\{w<\ell\}} \phi^2 (x) \sup_{x \in \supp \phi} \left( \int_{B_\rho^c} \frac{\dy}{|x-y|^{n+sp}}\right)\dx.$$ 
Estimate for $I_1$ is the same as in Lemma \ref{L2.7}, and hence we recover \eqref{EL2.7F} from \eqref{EL3.3C}. Rest of the proof follows that of Lemma~\ref{L2.7} verbatim.
Thus, from \eqref{EL3.3A} and Lemma~\ref{L3.2}, we obtain \eqref{EL3.3B}.

Now we can argue as in the first proof of Theorem~\ref{strongcomp} to conclude the result.
\end{proof}

We now conclude this section with the following strong comparison principle.
\begin{thm}\label{T4.4}
Suppose $p>2$.
Let $\lambda\in\R$ be such that $\Omega\cap \Sigma_\lambda\neq\emptyset$ and $(\Omega\cap \Sigma_\lambda)^\lambda\subset\Omega$.
Let $u\in W^{s, p}_{\rm loc}\cap L^\infty(\Rn)$ be a solution to \eqref{main-anti} and $ u_\lambda-u\gneq 0$ in $\Sigma_\lambda$. Then the following hold.
\begin{itemize}
\item[(i)] If $\frac{sp}{p-2}>1$ and $u\in C^1(\Omega)$, then $u_\lambda-u$ can not vanish in $(\Omega\cap \Sigma_\lambda)\setminus\mathcal{Z}_u$.
\item[(ii)] If $\frac{sp}{p-1}< 1$, then $u_\lambda-u>0$ in $\Omega\cap \Sigma_\lambda$.
\end{itemize}
\end{thm}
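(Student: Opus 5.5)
For (i) we argue by contradiction and combine Lemmas~\ref{L3.1}, \ref{L3.2} and \ref{L3.3} with a propagation argument. Suppose $x_0\in(\Omega\cap\Sigma_\lambda)\setminus\mathcal{Z}_u$ satisfies $u_\lambda(x_0)=u(x_0)$. Since $u_\lambda-u\gneq 0$ in $\Sigma_\lambda$, the hypotheses of Lemma~\ref{L3.1} hold at $x_0$ for a suitable $R_0$. We shrink $R$ until \eqref{cone-anti} holds as well, so Lemma~\ref{L3.3} applies and yields $r>0$ with $u_\lambda=u$ in $B_r(x_0)$. For $x\in B_r(x_0)$, both $u$ and $u_\lambda$ are local weak solutions of \eqref{main-anti} near $x$; the reflected function is a solution in $(\Omega\cap\Sigma_\lambda)$ because $(\Omega\cap\Sigma_\lambda)^\lambda\subset\Omega$ and the operator is invariant under reflections. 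We therefore test the difference of the two equations with a nonnegative $\varphi\in C^\infty_c(B_r(x_0))$, $\varphi\not\equiv0$, exactly as at the end of the proof of Theorem~\ref{strongcomp}. Because $u=u_\lambda$ in $B_r(x_0)$, the right-hand side vanishes, and the left-hand side reduces to the tail integral
$$\int_{B_r(x_0)}\varphi(x)\int_{B_r^c(x_0)}\big[J_p(u(x)-u_\lambda(y))-J_p(u(x)-u(y))\big]\frac{\dy\,\dx}{|x-y|^{n+sp}}=0 .$$
Here we cannot conclude pointwise monotonicity directly, since $u_\lambda-u$ is antisymmetric and changes sign across $T_\lambda$.

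To handle this we split $B_r^c(x_0)$ into $\Sigma_\lambda\setminus B_r(x_0)$ and its reflection, and use $u_\lambda(y^\lambda)=u(y)$. The integral then becomes
$$\int_{\Sigma_\lambda\setminus B_r(x_0)}\big[J_p(u(x)-u_\lambda(y))-J_p(u(x)-u(y))\big]\left(\frac{1}{|x-y|^{n+sp}}-\frac{1}{|x-y^\lambda|^{n+sp}}\right)\dy$$
plus the contribution of $B_r^\lambda(x_0)$, which is zero since $u=u_\lambda$ there. For $x,y\in\Sigma_\lambda$ the kernel difference is strictly positive, and the bracket is $\le 0$ because $u_\lambda\ge u$ in $\Sigma_\lambda$. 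Hence the integrand has a fixed sign, and the vanishing of the integral forces $u_\lambda\equiv u$ in $\Sigma_\lambda$. This contradicts $u_\lambda-u\gneq0$.

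The main obstacle is this sign bookkeeping for the tail. It works only because the point $x$ is placed in $\Sigma_\lambda$ and the reflected kernel is strictly smaller; it is the same device used for $I_1$ in Lemma~\ref{L3.1}. In this last step we only need $u_\lambda=u$ on a ball, not the truncated functions $\bu,\bu_\lambda$, so Lemma~\ref{L3.1} enters only through Lemma~\ref{L3.3}.

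For (ii), with $\frac{sp}{p-1}<1$, we argue pointwise, in the spirit of Theorem~\ref{T2.8}(a). If $u$ is not assumed to be more regular, we would invoke the interior regularity recalled in the remark after Theorem~\ref{T2.8}, which gives $u\in C^{\frac{sp}{p-1}+\epsilon}_{\rm loc}$ whenever $\frac{sp}{p-1}<\min\{1,\frac{sp}{p-2}\}$ (true for $p>2$); otherwise this can be taken as the standing hypothesis. Lemma~\ref{PL2.5} then makes $(-\Delta_p)^s u(x_0)$ and $(-\Delta_p)^s u_\lambda(x_0)$ classical, so at a point $x_0$ with $u_\lambda(x_0)=u(x_0)$ we get $0=(-\Delta_p)^s u_\lambda(x_0)-(-\Delta_p)^s u(x_0)$. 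Rewriting this over $\Sigma_\lambda$ with the reflected kernel as above produces a strictly signed integrand unless $u_\lambda\equiv u$ in $\Sigma_\lambda$, which again gives a contradiction.
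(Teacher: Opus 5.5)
Your proposal is correct and follows the paper's proof essentially step for step. In (i) you use Lemma~\ref{L3.3} to get $u_\lambda=u$ on a ball $B_r(x_0)$, test with a nonnegative $\varphi$ supported there, and fold the tail across $T_\lambda$ so that the integrand $[J_p(u(x)-u_\lambda(y))-J_p(u(x)-u(y))]\bigl(|x-y|^{-n-sp}-|x-y^\lambda|^{-n-sp}\bigr)$ has a fixed sign; in (ii) you do the same folding pointwise at $x_0$ via Lemma~\ref{PL2.5}. Your remark that (ii) tacitly needs $u\in C^{\frac{sp}{p-1}+\epsilon}_{\rm loc}$ is a fair addition. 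Reaching strictly above $\frac{sp}{p-1}$ from the cited regularity does, however, first require bootstrapping $f(u)$ to H\"older data, since the bounded-data estimate only gives exponents below $\frac{sp}{p-1}$.
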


\begin{proof}
First we consider (i). Suppose, to the contrary, that there exists $x_0\in (\Omega\cap \Sigma_\lambda)\setminus\mathcal{Z}_u$ satisfying
$u_\lambda(x_0)-u(x_0)=0$. Using Lemma~\ref{L3.3}, we find a ball $B_r(x_0)\Subset \Omega\cap\Sigma_\lambda$ so that
$u_\lambda-u=0$ in $B_r(x_0)$. Let $\phi\neq 0$ be a smooth nonnegative function with support in $B_{r}(x_0)$. Using \eqref{main-anti}
we write
$$\int_{\Rn} \int_{\Rn}[J_p(u(x) - u(y))- J_p(u_\lambda(x)- u_\lambda(y))](\phi(x) - \phi(y))\frac{\dx\dy}{|x-y|^{n+sp}} = \int_{\Rn} (f(u(x)) -f(u_\lambda(x)) )\phi(x) \dx =0.$$
On the other hand
\begin{align*}
{\rm LHS}
&= \int_{B_r(x_0)} \int_{B_r(x_0)}[J_p(u(x) - u(y))- J_p(u_\lambda(x)- u_\lambda(y))](\phi(x) - \phi(y))\frac{\dx\dy}{|x-y|^{n+sp}} 
\\ &\quad + \int_{B^c_r(x_0)} \int_{B_r(x_0)}[J_p(u(x) - u(y))- J_p(u_\lambda(x)- u_\lambda(y))]\phi(x) \frac{\dx\dy}{|x-y|^{n+sp}} 
\\ &\qquad - \int_{B_r(x_0)} \int_{B^c_r(x_0)}[J_p(u(x) - u(y))- J_p(u_\lambda(x)- u_\lambda(y))]\phi(y) \frac{\dx\dy}{|x-y|^{n+sp}} 
\\
&=  \underbrace{\int_{B^c_r(x_0)\setminus B^\lambda_r(x_0)} \int_{B_r(x_0)}[J_p(u(x) - u(y))- J_p(u(x)- u_\lambda(y))]\phi(x) \frac{\dx\dy}{|x-y|^{n+sp}}}_{\mathcal{A}} 
\\ &\quad - \underbrace{\int_{B_r(x_0)} \int_{B^c_r(x_0)\setminus B^\lambda_r(x_0)}[J_p(u(x) - u(y))- J_p(u_\lambda(x)- u(y))]\phi(y) \frac{\dx\dy}{|x-y|^{n+sp}} }_{\mathcal{B}},
\end{align*} 
using the fact $u_\lambda=u$ in $B^\lambda_r(x_0)$. 
We only compute $\mathcal{A}$ as $\mathcal{A}=-\mathcal{B}$.
\begin{align*}
\mathcal{A}&=  \int_{B^c_r(x_0)\setminus B^\lambda_r(x_0)} \int_{B_r(x_0)}[J_p(u(x) - u(y))- J_p(u(x)- u_\lambda(y))]\phi(x) \frac{\dx\dy}{|x-y|^{n+sp}}
\\
&=\int_{B^c_r(x_0) \cap \Sigma_\lambda} \int_{B_r(x_0)}[J_p(u(x) - u(y))- J_p(u(x)- u_\lambda(y))]\phi(x) \frac{\dx\dy}{|x-y|^{n+sp}}
\\&\quad + \int_{(B^c_r(x_0) \cap \Sigma_\lambda)^\lambda} \int_{B_r(x_0)}[J_p(u(x) - u(y))- J_p(u(x)- u_\lambda(y))]\phi(x) \frac{\dx\dy}{|x-y|^{n+sp}}
\\
&\geq \int_{B^c_r(x_0) \cap \Sigma_\lambda} \int_{B_r(x_0)}[J_p(u(x) - u(y))- J_p(u(x)- u_\lambda(y))]\phi(x) \left[ \frac{1}{|x-y|^{n+sp}} - \frac{1}{|x-y^\lambda|^{n+sp}} \right] \dx\dy
\\
&\geq 0.
\end{align*}
Therefore, we must have $u_\lambda=u$ in $B^c_r(x_0) \cap \Sigma_\lambda$, implying $u_\lambda-u=0$ in $\Rn$. But this is a contradiction.
Hence $u_\lambda-u$ cannot vanish in $(\Omega\cap \Sigma_\lambda)\setminus\mathcal{Z}_u$.

Now we consider (ii). From Lemma~\ref{PL2.5} we see that $(-\Delta_p)^s u_\lambda$ and $(-\Delta_p)^s u$ are classically defined. Therefore,
if $u_\lambda(x_0)=u(x_0)$ for some $x_0\in \Omega\cap\Sigma_\lambda$, we must have
\begin{align*}
0&=(-\Delta_p)^s u_\lambda(x_0)-(-\Delta_p)^s u(x_0)
\\
&=\int_{\Sigma_\lambda} [J_p(u_\lambda(x_0)-u_\lambda(y))-J_p(u(x_0)-u(y))]\frac{\dy}{|x_0-y|^{n+sp}}
\\
&\quad + \int_{\Sigma_\lambda} [J_p(u_\lambda(x_0)-u(y))-J_p(u(x_0)-u_\lambda(y))]\frac{\dy}{|x_0-y^\lambda|^{n+sp}}
\\
&=\int_{\Sigma_\lambda} [J_p(u_\lambda(x_0)-u_\lambda(y))-J_p(u(x_0)-u(y))]\left(\frac{1}{|x_0-y|^{n+sp}}-\frac{1}{|x_0-y^\lambda|^{n+sp}}\right)\dy
\\
&\quad + \int_{\Sigma_\lambda} \Bigl[\underbrace{J_p(u_\lambda(x_0)-u(y))-J_p(u(x_0)-u(y))}_{= 0}
\\
&\hspace{8em} + \underbrace{J_p(u_\lambda(x_0)-u_\lambda(y))-J_p(u(x_0)-u_\lambda(y))}_{= 0}\Bigr]\frac{\dy}{|x_0-y^\lambda|^{n+sp}}
\\
&\leq 0.
\end{align*}
Thus, we must have $u_\lambda-u=0$ in $\Sigma_\lambda$, which is a contradiction. This proves $u_\lambda-u>0$ in $\Omega\cap\Sigma_\lambda$.
\end{proof}
\section{Symmetry of solution using moving plane method}\label{S-moving}
In this section we prove symmetry of solution for the Dirichlet problem. The rely on the method of moving plane but since we do not have appropriate 
narrow domain maximum principle, we need to modify the standard tools accordingly. Let $\Omega$ be a bounded $C^{1,1}$ domain. Let
$u\in \bX(\Omega)\cap C(\Rn)$ be a weak solution to
\begin{equation}\label{main-moving}
\begin{split}
(-\Delta_p)^s u & =f(u) \quad \text{in}\; \Omega,
\\
u&>0 \quad \text{in}\; \Omega,
\\
u&=0 \quad \text{in}\; \Omega^c.
\end{split}
\end{equation}
Here
$$\bX(\Omega)=\{v\in W^{s, p}(\Rn)\; :\; v=0\quad \text{in}\; \Omega^c\},$$
and $f:\R\to\R$ is locally Lipschitz.
We also clarify that, by a {\it weak} solution of \eqref{main-moving}, we mean that
\[
\int_{\Rn}\int_{\Rn} J_p(u(x)-u(y))(\phi(x)-\phi(y))
\frac{\dx\dy}{|x-y|^{n+sp}}
=
\int_{\Rn} f(u(x))\phi(x)\,\dx
\]
for all $\phi\in \bX(\Omega)$. 
This notion is admittedly slightly stronger than that of a local weak solution. The main reason for adopting this definition is that it allows one to apply the weak comparison principle, which in turn yields the  Hopf's lemma.

Now consider a unit vector $e$ which without any loss of generality, we assume to be $(1, 0, \ldots, 0)$.
For
$\lambda\in\R$, we introduce the following notation.
\begin{gather*}
\Sigma_\lambda=\{x\in\Rn\; : \; x\cdot e<\lambda\}, \quad T_\lambda=\{x\in\Rn: \; x\cdot e=\lambda\}=\partial \Sigma_\lambda, 
\\ 
x^\lambda=(2\lambda-x_1, x_2, \ldots, x_n), \quad u_\lambda(x)=u(x^\lambda),\quad 
A^\lambda=\{x\in\Rn\; :\; x^\lambda\in A\}.
\end{gather*}

Our main result of this section is the following.
\begin{thm}\label{T5.1}
Let $p>2$. Let $\Omega$ be a bounded, strictly convex domain with $C^{1,1}$ boundary, symmetric with respect to 
$T_0=\{x_1=0\}$. Let
$$
u\in \mathbb{X}_0(\Omega)\cap C^1(\Omega\setminus T_0)\cap C(\Rn)
$$
be a local weak solution to \eqref{main-moving}. Assume further that $f$ is locally Lipschitz and satisfies $f(0)\geq 0$. Then $u$ is symmetric with respect to $T_0$, and the following assertions hold:
\begin{itemize}
\item[(i)] If $\frac{sp}{p-1}<1$, then $u$ is strictly increasing in the $x_1$-direction in $\Omega\cap \Sigma_0$.

\item[(ii)] If $\frac{sp}{p-2}>1$, then $u$ is strictly increasing in the $x_1$-direction in $(\Omega\cap \Sigma_0)\setminus \mathcal{Z}_u$ in the following sense: for $z_1<z_2$ such that
$$
(z_i,z')\in \Omega\cap \Sigma_0,\; i=1,2,
$$
and the line segment joining $(z_1,z')$ or $(z_2,z')$ does not lie in $\mathcal{Z}_u$, we have
$$
u(z_1,z')<u(z_2,z').
$$
\end{itemize}
\end{thm}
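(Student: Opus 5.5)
The argument is a moving plane procedure in the direction $e=e_1$. Let $a=\min_{\bar\Omega}x_1<0$. For $\lambda\in(a,0]$ set $w_\lambda=u_\lambda-u$. Since $u=0$ outside $\Omega$, $u>0$ in $\Omega$, and $\Omega$ is convex and symmetric, we have $(\Omega\cap\Sigma_\lambda)^\lambda\subset\Omega$. Hence $w_\lambda\ge 0$ on $\Sigma_\lambda\setminus\Omega$, and $w_\lambda$ is antisymmetric with respect to $T_\lambda$. Define
\[
\lambda_0=\sup\{\lambda\in(a,0]:\ w_\mu\ge 0\ \text{in}\ \Sigma_\mu\ \text{for all}\ \mu\in(a,\lambda]\}.
\]
The goal is to show $\lambda_0=0$. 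Applying the same argument in the direction $-e_1$ then gives $u_0\ge u$ and $u_0\le u$, so $u$ is symmetric with respect to $T_0$. The monotonicity assertions follow by applying Theorem~\ref{T4.4} to $w_\lambda$ for every $\lambda\in(a,0)$. In case (ii), the segment condition is what allows us to pick $\lambda=(z_1+z_2)/2$ with the relevant point outside $\mathcal{Z}_u$.

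\textbf{Step 1: starting the plane.} We need $w_\lambda\ge 0$ for $\lambda$ close to $a$. A narrow domain maximum principle is not available, so I would use the boundary behaviour instead. Since $f(0)\ge 0$, $u$ is a positive weak supersolution of $(-\Delta_p)^s u\ge f(u)-f(0)\ge -L u$ near $\partial\Omega$. The Hopf lemma, which comes from the weak comparison principle with a barrier of the form $d^s$ in the $C^{1,1}$ domain, then gives $u\ge c\,d^s$. Strict convexity implies that near the leftmost point the reflected point $x^\lambda$ lies much deeper inside $\Omega$ than $x$, i.e.\ $d(x^\lambda)\ge d(x)$ with a quantitative gap. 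I would argue by contradiction: if there is a sequence $\lambda_k\downarrow a$ and negative minima of $w_{\lambda_k}$ at points $x_k\in\Omega\cap\Sigma_{\lambda_k}$, use the inf/sup-convolution machinery (Lemmas~\ref{PL2.2}--\ref{PL2.4}) at the minimum point to get a pointwise inequality. In it, the antisymmetric kernel difference $|x-y|^{-n-sp}-|x-y^\lambda|^{-n-sp}$ dominates the term $L|w_\lambda(x_k)|$, and this produces a contradiction. Combining this with the Hopf lower bound rules out negative minima accumulating at the boundary.

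\textbf{Step 2: $\lambda_0=0$.} Suppose $\lambda_0<0$. By continuity $w_{\lambda_0}\ge 0$, and $w_{\lambda_0}\not\equiv 0$ because $u>0=u_{\lambda_0}$ on $(\Omega^c)^{\lambda_0}\cap\Sigma_{\lambda_0}\cap\Omega\neq\emptyset$. By Theorem~\ref{T4.4}, $w_{\lambda_0}>0$ in $\Omega\cap\Sigma_{\lambda_0}$, except possibly on $\mathcal{Z}_u$ in case (ii). Now take $\lambda_k\downarrow\lambda_0$ and negative minimum points $x_k$ of $w_{\lambda_k}$ in $\Sigma_{\lambda_k}\cap\Omega$, with $x_k\to\bar x$. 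The limit $\bar x$ can fall into three cases.
\begin{itemize}
\item If $\bar x\in\Omega\cap\Sigma_{\lambda_0}$ and $w_{\lambda_0}(\bar x)>0$, this is immediately a contradiction.
\item If $\bar x\in\partial\Omega$, it is excluded by Hopf and strict convexity, as in Step 1.
\item If $\bar x\in T_{\lambda_0}$, then $\nabla w_{\lambda_k}(x_k)=0$ forces $\partial_{x_1}u(\bar x)=0$. I would rule this out by repeating the pointwise argument at the minimum: the kernel term is of order $|w(x_k)|\cdot\mathrm{dist}(x_k,T_{\lambda_k})^{-sp}$ times a positive integral of $w_{\lambda_0}$, which beats $L|w(x_k)|$.
\end{itemize}
In case (ii) there is a fourth possibility, $\bar x\in\mathcal{Z}_u$ with $w_{\lambda_0}(\bar x)=0$. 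To handle it I would use the argument of Theorem~\ref{T2.8}(b), passing the viscosity inequalities through the convolutions. That argument shows a minimum at a critical point still forces the nonlocal integral $\int[J_p(\cdot)-J_p(\cdot)]$ over $B_r^c$, with the reflected kernel, to vanish, and hence $w_{\lambda_0}\equiv 0$, a contradiction.

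\textbf{Main obstacle.} I expect the hardest part to be the boundary and critical-point cases in Step 2, i.e.\ replacing the narrow domain principle. The delicate point is the quantitative inequality at a negative minimum of $w_\lambda$ near $T_\lambda$ or near $\partial\Omega$: the positive reflected-kernel contribution must dominate the Lipschitz term $L|w_\lambda|$ even though $u$ is only $C^1$. I would first try to carry this out at the $C^{1,1}$ contact points provided by Lemma~\ref{PL2.3}.
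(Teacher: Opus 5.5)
Your architecture follows the paper's: a pointwise inequality at negative minima of $w_\lambda$ via inf/sup-convolutions, a case analysis of the limit point as $\lambda_k\downarrow\lambda_0<0$, and Theorem~\ref{T4.4} for strict monotonicity. However, the quantitative step that replaces the narrow domain principle is missing, and the one you propose does not work. You use it in Step~1 and in the case $\bar x\in\partial\Omega\cap T_{\lambda_0}$, where you want the antisymmetric difference $|x-y|^{-n-sp}-|x-y^\lambda|^{-n-sp}$ to beat $L|w_\lambda(x_k)|$.

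This fails for two reasons. The minimum points converge to a point $\tilde x$ on the plane ($T_{\lambda_\circ}$, resp.\ $T_{\lambda_0}$), and there the difference tends to $0$ pointwise. Moreover, for $y$ away from $x_k$ it is only $O(\dist(x_k,T_\lambda))$, and minima may sit arbitrarily close to $T_\lambda$. So neither Fatou's lemma nor a direct estimate gives domination. Your ``quantitative gap'' $d(x^\lambda)\ge d(x)$ also fails uniformly in the cap, since $x^\lambda\approx x$ near $T_\lambda$.

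The paper instead discards the kernel-difference part, which is $\le 0$ at a minimum, and works with the reflected-kernel part. It writes both differences by the fundamental theorem of calculus and uses $|a|^{p-2}+|b|^{p-2}\ge c_p|a-b|^{p-2}$, which is where $p>2$ enters. This gives
\[
L\,w_\lambda(x_0)\le c_p\,w_\lambda(x_0)\int_{\Sigma_\lambda}|w_\lambda(y)|^{p-2}\,\frac{dy}{|x_0-y^\lambda|^{n+sp}} .
\]
The right side is linear in $w_\lambda(x_0)<0$, so dividing by it bounds the integral uniformly. Fatou's lemma passes this bound to the limit point. There it contradicts the Hopf bound $u\ge\kappa\,d^s$ (Lemma~\ref{Hopf}) combined with Lemma~\ref{L5.5}: $\int_{\cone(\delta)}\dist(y,\partial\cone)^{s(p-2)}|x_0-y|^{-n-sp}\,dy=\infty$, since the radial exponent is $-2s-1$. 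Strict convexity is needed only in Step~2, to fit such a cone with vertex $x^*\in\partial\Omega\cap T_{\lambda^*}$ inside the region where $u>0$ but $u$ vanishes at the reflected points. Without this ingredient, neither Step~1 nor the boundary case of Step~2 closes.

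For $\bar x\in\Omega\cap T_{\lambda_0}$, your claimed bound of order $|w(x_k)|\,\dist(x_k,T_{\lambda_k})^{-sp}$ is not justified. Near $\bar x$, $w_{\lambda_0}$ is small, and in case (ii) $\nabla u$ may degenerate as well. The argument that works uses a fact you noted but did not exploit. Since $u$ is $C^1$ near $\bar x$ and $\nabla w_{\lambda_k}(x_k)=0$, one gets $w_{\lambda_k}(x_k)/\delta_k\to 0$, where $\delta_k=\dist(x_k,T_{\lambda_k})$. Meanwhile, the far-field kernel-difference integral over $\Sigma_{\lambda_k}\setminus B_r(\bar x)$, divided by $\delta_k$, has strictly negative limsup because $w_{\lambda_0}\gneq 0$.

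Two smaller points:
\begin{itemize}
\item The set you use to show $w_{\lambda_0}\not\equiv 0$ is empty by convexity, and on it one would have $w_{\lambda_0}<0$. The right witnesses are points $y\in\Sigma_{\lambda_0}\setminus\bar\Omega$ with $y^{\lambda_0}\in\Omega$.
\item The limiting argument you reserve for $\bar x\in\mathcal{Z}_u$ already disposes of every limit point in $\bar\Omega_{\lambda_0}\setminus T_{\lambda_0}$ without Theorem~\ref{T4.4}. This is how the paper proceeds.
\end{itemize}
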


As an immediate consequence of Theorem~\ref{T5.1}, we obtain the following corollary.

\begin{cor}
Let $p>2$, and let $B$ be a ball centered at the origin. Suppose that
\[
u\in \bX(B)\cap C^1(B\setminus\{0\})\cap C(\Rn)
\]
is a weak solution to
\begin{equation*}
\begin{split}
(-\Delta_p)^s u &= f(u) \quad \text{in }\; B,
\\
u& >0 \quad  \text{in}\; B,
\\
u&=0 \quad \text{in }\; B^c,
\end{split}
\end{equation*}
where $f$ is locally Lipschitz and satisfies $f(0)\geq 0$. Then $u$ is radially symmetric and strictly decreasing along the radial direction in the sense described in Theorem~\ref{T5.1}.
\end{cor}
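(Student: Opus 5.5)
The corollary follows from Theorem~\ref{T5.1} applied in every direction. A ball $B$ centered at the origin is bounded and strictly convex with smooth boundary. It is symmetric with respect to every hyperplane through $0$. The operator $(-\Delta_p)^s$ is invariant under rotations. So, given any unit vector $e$, we choose a rotation $\mathcal{R}$ with $\mathcal{R}e_1=e$ and set $\tilde u=u\circ\mathcal{R}$. Then $\tilde u$ solves the same Dirichlet problem in $B$ and belongs to $\bX(B)\cap C(\Rn)$. Since $u\in C^1(B\setminus\{0\})$, we get $\tilde u\in C^1(B\setminus\{0\})\supset C^1(B\setminus T_0)$ restricted appropriately; more precisely, $\tilde u\in C^1(B\setminus T_0)$ because $T_0\ni 0$. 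A weak solution is in particular a local weak solution, and $f$ is locally Lipschitz with $f(0)\geq 0$. Hence Theorem~\ref{T5.1} applies to $\tilde u$ and gives symmetry with respect to $T_0$, that is, symmetry of $u$ with respect to the hyperplane $e^\perp$.

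Next we deduce radial symmetry. Since $u$ is symmetric with respect to every hyperplane through the origin, it is invariant under all reflections through such hyperplanes. These reflections generate $O(n)$, so $u(Qx)=u(x)$ for every orthogonal $Q$. Alternatively, for $|x|=|y|\neq 0$ with $x\neq y$, the reflection across $(x-y)^\perp$ sends $x$ to $y$. Hence $u(x)=v(|x|)$ for some function $v$.

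Finally we obtain monotonicity along rays. Take two points $0<\rho_1<\rho_2<R$ on a ray. Choose $e$ to be the unit vector opposite to the ray direction, so the points become $(z_1,z')=(-\rho_2,0)$ and $(z_2,z')=(-\rho_1,0)$ in the rotated coordinates, with $z_1<z_2<0$; both lie in $B\cap\Sigma_0$. Case (i) of Theorem~\ref{T5.1} gives $u(-\rho_2e_1)<u(-\rho_1e_1)$, i.e.\ $v(\rho_2)<v(\rho_1)$. In case (ii) we get the same strict inequality under the proviso of Theorem~\ref{T5.1}(ii) concerning $\mathcal{Z}_u$. This is exactly \lq\lq strictly decreasing along the radial direction in the sense described in Theorem~\ref{T5.1}\rq\rq.

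The only point that needs care is the regularity hypothesis: Theorem~\ref{T5.1} asks for $C^1$ away from $T_0$, while here we only know $C^1$ away from $0$. This is weaker than required only at $0$, which lies on every $T_0$, so the hypothesis is satisfied for every direction. I expect no real obstacle beyond checking this and the rotation invariance of the equation and of the weak formulation. The latter is immediate from the change of variables $x\mapsto\mathcal{R}x$, $y\mapsto\mathcal{R}y$ in the double integral, since $|\mathcal{R}x-\mathcal{R}y|=|x-y|$.
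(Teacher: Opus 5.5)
Your proposal is correct and follows the route the paper intends: the paper gives the corollary as an immediate consequence of Theorem~\ref{T5.1}, and you apply that theorem in every direction via rotation invariance, obtain radial symmetry from symmetry across all hyperplanes through the origin, and read off radial monotonicity from the monotonicity in $\Sigma_0$. There are only cosmetic slips. First, since $B\setminus T_0\subset B\setminus\{0\}$, the hypothesis $C^1(B\setminus\{0\})$ is the \emph{stronger} one and restricts to $C^1(B\setminus T_0)$, so your ``$\supset$'' and ``weaker'' point the wrong way even though your conclusion is right. Second, the strict inequality in the monotonicity step should be written for $\tilde u$ rather than $u$.
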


We also record the following result, which supports the regularity assumptions in Theorem~\ref{T5.1}. This result essentially follows from \cite{GJS25} with only minor modifications.

\begin{thm}\label{T5.3}
Let $p\in [2,\frac{2}{1-s})$, and let $u$ be a local weak solution to
$$
(-\Delta_p)^s u=f(x) \quad \text{in } \Omega,
$$
where $f$ is locally Lipschitz. Then $
u\in C^{1,\alpha}_{\rm loc}(\Omega)
$
for some $\alpha>0$.
In particular, if $p\in [2,\frac{2}{1-s})$, then every local weak solution of
\[
(-\Delta_p)^s u=f(u) \quad \text{in } \Omega,
\]
with $f$ locally Lipschitz, belongs to $C^{1,\alpha}_{\rm loc}(\Omega)$.
\end{thm}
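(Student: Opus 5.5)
The plan is to reduce to the case $f=0$ treated in \cite{GJS25}, using the fact that a locally Lipschitz right-hand side is a lower-order perturbation at every scale relevant to $C^{1,\alpha}$ regularity. The second assertion will follow from the first once we know that $u$ is locally Lipschitz, hence $x\mapsto f(u(x))$ is locally Lipschitz.

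\textbf{Step 1: regularity of the right-hand side.} For $f(u)$, I would first use the known interior H\"older and Lipschitz-type estimates \cite{BDLMS24a, BS25, GL24}. Since $p<\frac{2}{1-s}$ means $p-2<sp$, we get $\frac{sp}{p-2}>1$. Hence these results give $u\in C^{0,1}_{\rm loc}(\Omega)$, or at least $u\in C^{0,\gamma}_{\rm loc}$ for every $\gamma<1$. A preliminary bootstrap then shows that $g(x):=f(u(x))$ is locally bounded and H\"older (indeed locally Lipschitz once $u$ is). So it suffices to prove the first assertion for a general locally Lipschitz $f(x)$.

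\textbf{Step 2: improvement of flatness.} I would redo the GJS improvement-of-flatness argument with a source term. After normalizing $\|u\|_{L^\infty(B_1)}+\tail(u;0,1)\le 1$, the rescalings $u_r(x)=r^{-1-\alpha}(u(rx)-\ell_r(x))$ solve an equation whose right-hand side is $r^{sp-(1+\alpha)(p-1)}f(rx)$, suitably modified by the affine part. The key inequality is the admissibility condition $sp>(1+\alpha)(p-1)-$(whatever gain the affine structure provides). Here I expect to use that, along the affine-subtracted scaling adapted to the degenerate operator at points with nonzero slope, the error scales like $r^{sp-(p-2)-\ldots}$. Equivalently, I would work at the level of the difference-quotient or linearized equation used in \cite{GJS25}. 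In that equation the source contributes $\nabla f$, or difference quotients of $f$, which are bounded. This is exactly where Lipschitz (rather than merely bounded) $f$ is needed.

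\textbf{Step 3: the compactness argument.} Concretely, I would follow their scheme: incremental quotients $\frac{u(x+h)-u(x)}{|h|^\beta}$ satisfy a linear nonlocal equation with a kernel weighted by $\int_0^1|u(x)-u(y)+t(\delta_hu(x)-\delta_hu(y))|^{p-2}\,dt$. This kernel is nonnegative but possibly degenerate, and the right-hand side becomes $\frac{f(x+h)-f(x)}{|h|^\beta}$, which is bounded by $\mathrm{Lip}(f)|h|^{1-\beta}$. The De Giorgi / iteration steps in \cite{GJS25} then carry through with an extra bounded-source term, which is harmless for local estimates. This gives $\nabla u\in C^{\alpha}_{\rm loc}$.

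\textbf{Main obstacle.} I expect the main difficulty to be checking that every step of \cite{GJS25} tolerates a bounded source at each dyadic scale, in particular the step where the kernel degenerates near critical points. There I would try to absorb the source by a scaling argument: the source term contributes $r^{sp}$ against the $r^{(1+\alpha)(p-1)}$ oscillation budget. This is affordable precisely when $\alpha$ is small and $p<\frac{2}{1-s}$, matching the range in the statement. Finally, the ``in particular'' part follows by applying the first part to $g=f(u)$, once Step 1 gives that $g$ is locally Lipschitz.
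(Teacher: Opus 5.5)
Your overall architecture agrees with the paper's sketch. First obtain local Lipschitz continuity of $u$: the paper uses \cite[Theorem~1.1]{BT25} when $sp>p-1$, and \cite[Theorem~1.2]{BS25} plus bootstrapping when $sp>p-2$. Note that $C^{0,\gamma}$ for every $\gamma<1$ would not suffice, and Lipschitz continuity of $u$ is also the starting input for the first assertion. Then rerun \cite{GJS25} on the differentiated (difference-quotient) equation $\cL_u(e\cdot\grad u)=e\cdot\grad f$, whose source is controlled by $\mathrm{Lip}(f)$.

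The gap is at the one point where extending \cite{GJS25} to $f\neq 0$ needs an actual argument: why the source stays negligible at every scale. Your Step~2 and ``main obstacle'' paragraph do this bookkeeping at the level of $u$, with the $C^{1,\alpha}$ scaling $r^{-1-\alpha}u(rx)$. That scaling only sees $\|f\|_\infty$ and produces the factor $r^{sp-(1+\alpha)(p-1)}$. This factor tends to zero only if $sp>(1+\alpha)(p-1)$, which is stronger than $sp>p-1$. It fails on much of the claimed range, e.g.\ $s=\frac12$, $p=3<4=\frac{2}{1-s}$. So your claim that it is affordable ``precisely when $p<\frac{2}{1-s}$'' is false.

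Subtracting $\ell_r$ does not rescue this. $(-\Delta_p)^s$ is invariant only under adding constants, so $u(rx)-\ell_r(x)$ solves an $r$-dependent operator. Linearizing around a slope $\xi\neq0$ gives, roughly, $\mathcal{L}_\xi v_r\approx r^{sp-(p-1)-\alpha}f(rx)$, where $\mathcal{L}_\xi$ has kernel $(p-1)|\xi\cdot(x-y)|^{p-2}|x-y|^{-n-sp}$. Bookkeeping that only sees the size of $f$ cannot reach the range $p-2<sp\le p-1$.

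The mechanism you need, and the one the paper uses, is the gradient-normalized Lipschitz scaling of \cite{GJS25}. The function $u_1(x)=2^{K_0}u(2^{-K_0}x)$ solves $(-\Delta_p)^s u_1=f_1$ with $f_1(x)=2^{K_0(p-1-sp)}f(2^{-K_0}x)$. The quantity entering the linearized equation is the Lipschitz seminorm of $f_1$, which equals $2^{K_0(p-2-sp)}$ times that of $f$. It can be made small exactly because $sp>p-2$, and this is the smallness hypothesis required in the improvement lemma \cite[Lemma~4.1]{GJS25}.

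When that lemma is iterated, one rescales by $(1-\delta)^{-k}2^{k}u(2^{-k}x)$. The source seminorm is then multiplied by $\bigl((1-\delta)^{-(p-1)}2^{p(1-s)-2}\bigr)^{k}$. So $\delta$, and hence the H\"older exponent of $\grad u$, must be chosen with $(1-\delta)^{-(p-1)}2^{p(1-s)-2}<1$. Your exponent is off by exactly the factor $r$ gained by differentiating $f$.

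For the same reason, the bounded source is not automatically ``harmless for local estimates.'' Its smallness must be propagated through this iteration in the degenerate alternative. It must also be carried through the complementary alternative, where $\grad u$ is close to a fixed unit vector on most of $B_1$; this is handled by the Ishii--Lions argument of \cite[Section~6]{GJS25}. Your ``De Giorgi / iteration'' description does not address that dichotomy.
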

For a sketch of the proof see Appendix~\ref{Appen}.
\medskip

The remaining part of this section is devoted to the proof of Theorem~\ref{T5.1}. We define
 $\lambda_\circ=\inf\{\lambda\in\R\; :\; \Omega\cap\Sigma_\lambda\neq\emptyset\}$. Clearly, $-\infty<\lambda_\circ<0$. Also, let
\begin{align*}
 w_\lambda(x)&=u_\lambda(x)-u(x)=u(x^\lambda)-u(x)\quad x\in \Rn,
 \\
 \Omega_\lambda &=\Omega\cap\Sigma_\lambda.
\end{align*}
We shall make use of the following Hopf's lemma, borrowed from \cite[Theorem~2.6]{IMP23}.
\begin{lem}\label{Hopf}
There exists a constant $\kappa>0$ satisfying 
$$\inf_{x\in \Omega} \frac{u(x)}{(\dist(x, \Omega^c))^s}\geq \kappa.$$
\end{lem}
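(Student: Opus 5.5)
The plan is to prove the lemma by a barrier and weak-comparison argument, treating the interior and the boundary layer separately. Since $u\in C(\Rn)$ vanishes outside the bounded set $\Omega$, we have $0\le u\le M:=\|u\|_\infty$. Let $L$ be the Lipschitz constant of $f$ on $[0,M]$. Using $f(0)\geq 0$ we get $f(u)\geq f(0)-Lu\geq -Lu$ in $\Omega$, so $u$ is a weak supersolution of
$$(-\Delta_p)^s u\geq -Lu \quad\text{in }\Omega.$$
On any compact $D\Subset\Omega$ we have $\min_D u>0$, by continuity and positivity. So it suffices to prove $u\geq \kappa\,(\dist(x,\Omega^c))^s$ in a thin layer $\{0<\dist(x,\Omega^c)<\tau\}$, with $\kappa,\tau$ independent of the point. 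By the $C^{1,1}$ regularity, $\Omega$ satisfies a uniform interior ball condition with some radius $\varrho>0$. Therefore it is enough to work in a single ball $B=B_\varrho(z)\subset\Omega$ tangent to $\partial\Omega$, with all constants depending only on $\varrho$, $n,s,p$, $L$, $M$ and $\min_{D}u$ for one fixed compact set $D$ containing every concentric ball $B_{\varrho/2}(z)$. Such a $D$ exists, e.g. $D=\{\dist(\cdot,\Omega^c)\geq \varrho/2\}$.

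For the barrier, take $v(x)=(\varrho^2-|x-z|^2)_+^s$. The standard computation (as in \cite{IMS16}) shows that $(-\Delta_p)^s v$ is bounded in the annulus $A_\tau=B\setminus \overline{B_{\varrho-\tau}(z)}$, say by $C_0$. Next set
$$W=v+\mathbbm{1}_{B_{\varrho/4}(z)}.$$
For $x\in A_\tau$ with $\tau<\varrho/4$, adding the bump lowers the operator by a fixed amount, as in the computation \eqref{EL3.1A} using \eqref{EL2.6D}. We choose the bump height $k$ large enough that the drop exceeds $C_0+1$, so that $(-\Delta_p)^s W\leq -1$ in $A_\tau$ pointwise, and hence weakly. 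Finally, set $\underline{w}=\epsilon W$, with $\epsilon>0$ small enough that $\epsilon (\varrho^{2s}+k)\leq \min_D u$. Then $\underline w\le u$ in $B_{\varrho-\tau}(z)\subset D$, and also outside $B$, where $\underline w=0\le u$. By $(p-1)$-homogeneity, $(-\Delta_p)^s\underline w\leq-\epsilon^{p-1}$ in $A_\tau$.

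Only now do we fix $\tau$. Since $u$ is uniformly continuous and vanishes on $\partial\Omega$, we can choose $\tau$ so small that $Lu\leq \tfrac12\epsilon^{p-1}$ on every such annulus $A_\tau$. This choice is uniform over tangent balls, and it is admissible because $\epsilon$ was fixed independently of $\tau$. Then in $A_\tau$, in the weak sense,
$$(-\Delta_p)^s u\geq -\tfrac12\epsilon^{p-1}>-\epsilon^{p-1}\geq (-\Delta_p)^s\underline w,$$
with $u\geq \underline w$ outside $A_\tau$. The weak comparison principle then gives $u\ge\underline w$ in $A_\tau$. To prove it, test both inequalities with $(\underline w-u)_+\in\bX(A_\tau)$, subtract, and use the strict monotonicity of $J_p$; the right-hand sides no longer depend on the unknowns, so no smallness is needed. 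Since $\underline w\geq \epsilon\,\varrho^s(\varrho-|x-z|)^s$ and $\varrho-|x-z|=\dist(x,\Omega^c)$ along the normal through the tangency point, we obtain the bound $u\geq \kappa \,\dist(x,\Omega^c)^s$ in the layer. Combining this with the interior bound gives the lemma.

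The main obstacle I expect is the barrier step: verifying rigorously that $(-\Delta_p)^s v$ is bounded up to $\partial B$ from inside for $p>2$, and that the bump indeed contributes a uniform negative amount, including the weak (not only pointwise) formulation near $\partial B$. For the first point I would cite the explicit one-dimensional computation for $(x_+)^s$ together with the $C^{1,1}$ perturbation estimates in \cite{IMS16}. If that is delicate, a fallback is simply to invoke \cite[Theorem~2.6]{IMP23} directly for supersolutions of $(-\Delta_p)^s u\ge -Lu$ with $u\ge 0$.
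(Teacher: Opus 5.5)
There is a genuine gap in how you order the choices: you fix $\epsilon$ first and then take $\tau$ small. This fails for two reasons.

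First, the inclusion $B_{\varrho-\tau}(z)\subset D=\{\dist(\cdot,\Omega^c)\ge\varrho/2\}$ is false for every $\tau<\varrho/2$, and you need $\tau<\varrho/4$. Indeed, no fixed compact $D\Subset\Omega$ can contain all the balls $B_{\varrho-\tau}(z)$, since they exhaust $B_\varrho(z)$, whose closure touches $\partial\Omega$ at the tangency point $x_0$. Concretely, the point of $\partial B_{\varrho-\tau}(z)$ on the radius toward $x_0$ is at distance exactly $\tau$ from $\Omega^c$, and there $\underline w=\epsilon v\approx\epsilon(2\varrho\tau)^s$. So the exterior condition $\underline w\le u$ on $B_{\varrho-\tau}(z)$, which your comparison needs, is essentially the estimate $u\gtrsim \dist(\cdot,\Omega^c)^s$ you are trying to prove.

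Second, $A_\tau$ is a thin shell of the tangent ball, not a thin boundary layer of $\Omega$. In general $B_\varrho(z)$ touches $\partial\Omega$ only at $x_0$. Points of $A_\tau$ near the antipode $2z-x_0$ stay at a distance from $\partial\Omega$ that does not shrink with $\tau$; for $\Omega=B_{2\varrho}$ that antipode is the centre of $\Omega$. So uniform continuity does not give $Lu\le\frac12\epsilon^{p-1}$ on $A_\tau$. If you let $D$ or the comparison region depend on $\tau$, then $\epsilon$ depends on $\tau$ and the choice becomes circular.

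The argument can be repaired by dropping the smallness idea, keeping the zero-order term, and working on the fixed annulus $A=B_\varrho(z)\setminus\overline{B_{\varrho/2}(z)}$, whose inner ball does lie in $D$.

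\begin{enumerate}
\item From $f(0)\ge 0$ and $u\ge0$, $u$ weakly satisfies $(-\Delta_p)^s u+Lu\ge 0$. The operator $(-\Delta_p)^s+L$ is monotone. Testing with $(\underline w-u)_+\in\bX(A)$ therefore gives comparison as soon as $(-\Delta_p)^s\underline w+L\underline w\le 0$ in $A$, because the extra term $L\int(\underline w-u)(\underline w-u)_+$ has the good sign.
\item Take $C_0$ to bound $(-\Delta_p)^s v$ on all of $A$; this is harmless since $v$ is smooth inside $B$ away from $z$. Then $(-\Delta_p)^s W\le C_0-c_1k^{p-1}$ in $A$.
\item Take $\epsilon=m/(\varrho^{2s}+k)$ with $m=\min_D u$. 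The requirement becomes $\epsilon^{p-2}(c_1k^{p-1}-C_0)\ge L\varrho^{2s}$. The left side grows like $c_1m^{p-2}k$ as $k\to\infty$, so a large bump suffices.
\end{enumerate}

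Replacing $-Lu$ by the constant $-LM$ would not work here, since $\epsilon^{p-1}k^{p-1}$ stays bounded as $k\to\infty$. With this change your barrier gives a self-contained proof. The paper instead only notes $(-\Delta_p)^s u-f(u)\ge -f(0)$ and invokes \cite[Theorem~2.6]{IMP23}, which is your fallback.
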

\begin{proof}
Since $f(0)\geq 0$, we have 
$$(-\Delta_p)^s u - f(u)\geq -f(0)\quad \text{in}\; \Omega.$$
Now the result follows from \cite[Theorem~2.6]{IMP23}. It is important to point that the Lemma in \cite{IMP23}
is stated under the condition $n>sp$, but the proof there does not require this condition.
\end{proof}

We also require the following estimate
\begin{lem}\label{L5.5}
Suppose $p>2$, and $\cone_{x_0}$ be a (unbounded) cone with the vertex at $x_0$. For $\delta>0$ we define $\cone_{x_0}(\delta)=\cone_{x_0}\cap B_\delta(x_0)$.
 Then, for any $\delta>0$, we have 
 $$\int_{\cone_{x_0}(\delta)} \dfrac{(\dist(y, \partial \cone_{x_0}))^{s(p-2)} }{|x_0 - y|^{n+sp}} \dy = +\infty.$$
\end{lem}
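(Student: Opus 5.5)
The plan is to use the homogeneity of the cone: pass to polar coordinates centred at the vertex, so that the integral factors into a radial integral and an angular integral. Translate so that $x_0=0$ and write $\cone=\cone_{0}$. I will take $\cone$ to be a nondegenerate cone, i.e.\ one with nonempty interior; otherwise the integral is over a null set and the statement is vacuous.

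The key observation is that $\cone$ is invariant under dilations, so its boundary is too. Hence for $t>0$ we have $\dist(ty,\partial\cone)=t\,\dist(y,\partial\cone)$. Writing $y=\rho\omega$ with $\rho=|y|$ and $\omega\in\mathbb{S}^{n-1}\cap\cone$, this gives
\[
\dist(y,\partial\cone)=\rho\, d(\omega),\qquad d(\omega):=\dist(\omega,\partial\cone).
\]
Substituting into the integrand and using $\dy=\rho^{n-1}\,{\rm d}\rho\,{\rm d}\omega$ yields
\[
\int_{\cone(\delta)} \frac{(\dist(y,\partial\cone))^{s(p-2)}}{|y|^{n+sp}}\dy
=\left(\int_{\mathbb{S}^{n-1}\cap\cone} d(\omega)^{s(p-2)}\,{\rm d}\omega\right)\int_0^\delta \rho^{s(p-2)-sp-1}\,{\rm d}\rho .
\]
The radial exponent is $s(p-2)-sp-1=-1-2s<-1$, so the radial integral diverges for every $\delta>0$.

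It then remains to check that the angular factor is strictly positive, so that we do not get $0\cdot\infty$. Since $\cone$ has nonempty interior, there is a point $\omega_0$ in the interior of $\cone$ with $|\omega_0|=1$. By continuity of the distance function, $d(\omega)\ge d(\omega_0)/2>0$ on a small spherical cap around $\omega_0$, and this cap has positive surface measure. Hence the angular integral is positive, possibly $+\infty$, and by Tonelli the whole integral equals $+\infty$.

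The only point I would treat with some care is the homogeneity identity $\dist(\rho\omega,\partial\cone)=\rho\,d(\omega)$. This follows because $z\mapsto\rho z$ is a bijection of $\partial\cone$ onto itself that scales all distances by $\rho$. Nothing else is a real obstacle; the hypothesis $p>2$ only ensures the weight exponent $s(p-2)$ is positive, which is harmless here.
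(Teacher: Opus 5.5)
Your proposal is correct and follows essentially the same route as the paper: translate the vertex to the origin, use dilation invariance of $\partial\cone$ to get $\dist(\rho\omega,\partial\cone)=\rho\,\dist(\omega,\partial\cone)$, and factor the integral into the divergent radial part $\int_0^\delta \rho^{-1-2s}\,{\rm d}\rho$ times an angular integral that is positive because $\cone$ has nonempty interior. Your version adds a small spherical-cap argument for that positivity and an explicit appeal to Tonelli, which the paper leaves implicit.
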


\begin{proof}
Without loss of generality, we assume that the cone has unit height (that is, $\delta=1$), $x_0 = 0$ and we denote $\cone_{x_0}$ by $\cone$. For any point $y \in \cone \setminus \{0\}$, we write $y = r\omega$, where $r = |y| $ and $\omega = \frac{y}{|y|} \in \mathbb{S}^{n-1}$.

For any $z \in \partial \cone$ and $r > 0$, the scaled point $z' = \frac{z}{r}$  belongs to $\partial\cone$. Substituting $y = r\omega$ and $z = rz'$, we obtain:
$$\text{dist}(r\omega, \partial \cone) = \inf_{z' \in \partial  \cone} |r\omega - rz'| = r \inf_{z' \in \partial \cone} |\omega - z'| = r\, \dist (\omega, \partial\cone).$$
Therefore, the integral takes the form
\begin{align*}
{\rm LHS}&= \int_0^1 \int_{\cone \cap \mathbb{S}^{n-1}} \frac{r^{s(p-2)} (\dist (\omega, \partial \cone))^{s(p-2)}}{r^{n+sp}} r^{n-1} \dr\,  {\rm d} \sigma(\omega) 
 \\
&=\left( \int_0^1 r^{-2s-1} \, \dr \right) \left( \int_{\cone \cap \mathbb{S}^{n-1}} (\dist (\omega, \partial \cone))^{s(p-2)} \, {\rm d} \sigma (\omega) \right),
 \end{align*}
 where $\sigma$ is the $(n-1)$-dimensional Hausdorff measure on $\mathbb{S}^{n-1}$. The first integral is unbounded, and since $\cone$ has a non-empty interior, the second integral is strictly positive. Hence the result.
\end{proof}

\noindent {\bf Step 1.} (Starting the moving plane) We show that for $\lambda \in (\lambda_\circ,0)$, sufficiently close to $\lambda_\circ$, we have
\begin{equation}\label{E5.2}
w_\lambda \geq 0\quad  \text{ in }\;  \Omega_\lambda.
\end{equation}
Suppose otherwise, then for $\cM_\lambda:=\{x\in \Sigma_\lambda\; :\; w_\lambda(x)=\min_{\Sigma_\lambda} w_\lambda<0\}$, we must have $\cM_\lambda\Subset \Omega_\lambda$. This holds since
$w_\lambda\geq 0$ in $(\Sigma_\lambda\setminus \Omega_\lambda)\cup T_\lambda$. Fix $\widehat\Omega_\lambda$ satisfying $\cM_\lambda\Subset\widehat\Omega_\lambda\Subset \Omega_\lambda$.
Let $(u_\lambda)_\varepsilon$ and $u^\varepsilon$ be the inf and sup convolution of $u_\lambda$ and $u$, respectively. Denote by $w^\varepsilon_\lambda=(u_\lambda)_\varepsilon-u^\varepsilon$. Since
$w^\varepsilon_\lambda\to w_\lambda$ as $\varepsilon\to 0$, uniformly on compacts, for points $x_{\varepsilon}\in\Argmin_{\bar\Omega_\lambda} w^\varepsilon_\lambda \to \cM_\lambda$ as $\varepsilon\to 0$, up to a subsequence.
Thus, we can find $\varepsilon_0>0$ such that for all $\varepsilon\leq \varepsilon_0$ we have $x_\varepsilon\in \widehat\Omega_\lambda$. From Lemmas~\ref{PL2.2}-\ref{PL2.4} we then have
$$(-\Delta_p)^s (u_\lambda)_\varepsilon(x_\varepsilon)\geq f((u_\lambda)_\varepsilon(x_\varepsilon))+\eta_\varepsilon(x_\varepsilon)
\quad \text{and}\quad
(-\Delta_p)^s u^\varepsilon(x_\varepsilon)\leq f(u^\varepsilon(x_\varepsilon))+\tilde\eta_\varepsilon(x_\varepsilon).$$
Let $r>0$ be such that $B_r(x_\varepsilon)\subset \Omega_\lambda$ for all $\varepsilon\leq \varepsilon_0$, otherwise, we can set $\varepsilon_0$ smaller. Since the operators are calculated classically
at $x_\varepsilon$, we find
\begin{align}\label{E5.3}
(-\Delta_p)^s (u_\lambda)_\varepsilon(x_\varepsilon) - (-\Delta_p)^s u^\varepsilon(x_\varepsilon) &\geq  f((u_\lambda)_\varepsilon(x_\varepsilon)) - f(u^\varepsilon(x_\varepsilon)) + \eta_\varepsilon(x_\varepsilon)
-\tilde\eta_\varepsilon(x_\varepsilon)\nonumber
\\
&\geq -L |w^\varepsilon_\lambda(x_\varepsilon)|  + \eta_\varepsilon(x_\varepsilon) - \tilde\eta_\varepsilon(x_\varepsilon),
\end{align}
where $L$ denotes the local Lipschitz constant. On the other hand, the left-hand side above is computed as 
\begin{align*}
{\rm LHS}
&= \int_{\Sigma_\lambda} [J_p((u_\lambda)_\varepsilon(x_\varepsilon) - (u_\lambda)_\varepsilon(y)) - J_p (u^\varepsilon(x_\varepsilon) - u^\varepsilon(y))]
 \left(\frac{1}{|x_\varepsilon - y|^{n+sp}} - \frac{1}{|x_\varepsilon - y^\lambda|^{n+sp}} \right)\dy
\\
&\quad
+ \int_{\Sigma_\lambda} \Bigl[J_p((u_\lambda)_\varepsilon(x_\varepsilon) - (u_\lambda)_\varepsilon(y)) - J_p (u^\varepsilon(x_\varepsilon) - u^\varepsilon(y^\lambda))
\\
&\hspace{8em} + J_p((u_\lambda)_\varepsilon(x_\varepsilon) - (u_\lambda)_\varepsilon(y^\lambda)) - J_p (u^\varepsilon(x_\varepsilon) - u^\varepsilon(y))\Bigr]	
\frac{\dy}{|x_\varepsilon - y^\lambda|^{n+sp}}
\\
&\leq\int_{\Sigma_\lambda\setminus B_r(x_\varepsilon)} [J_p((u_\lambda)_\varepsilon(x_\varepsilon) - (u_\lambda)_\varepsilon(y)) - J_p (u^\varepsilon(x_\varepsilon) - u^\varepsilon(y))]
 \left(\frac{1}{|x_\varepsilon - y|^{n+sp}} - \frac{1}{|x_\varepsilon - y^\lambda|^{n+sp}} \right)\dy
\\
&\quad
+ \int_{\Sigma_\lambda} \Bigl[J_p((u_\lambda)_\varepsilon(x_\varepsilon) - (u_\lambda)_\varepsilon(y)) - J_p (u^\varepsilon(x_\varepsilon) - u^\varepsilon(y^\lambda))
\\
&\hspace{8em} + J_p((u_\lambda)_\varepsilon(x_\varepsilon) - (u_\lambda)_\varepsilon(y^\lambda)) - J_p (u^\varepsilon(x_\varepsilon) - u^\varepsilon(y))\Bigr]	
\frac{\dy}{|x_\varepsilon - y^\lambda|^{n+sp}}
\end{align*}
since in $B_r(x_\varepsilon)\subset\Omega_\lambda$ we have $(u_\lambda)_\varepsilon(x_\varepsilon)-u^\varepsilon(x_\varepsilon)\leq (u_\lambda)_\varepsilon(y)-u^\varepsilon(y)$, implying
$$
J_p((u_\lambda)_\varepsilon(x_\varepsilon) - (u_\lambda)_\varepsilon(y)) - J_p (u^\varepsilon(x_\varepsilon) - u^\varepsilon(y))\leq 0.
$$
Now let $\varepsilon\to 0$, and using $x_\varepsilon\to x_0\in\cM_\lambda$ (up to a subsequence) in \eqref{E5.3} we obtain
\begin{align}\label{E5.4}
 L w_\lambda(x_0)
& \leq\int_{\Sigma_\lambda\setminus B_r(x_0)} [J_p(u_\lambda(x_0) - u_\lambda(y)) - J_p (u(x_0) - u(y))]
 \left(\frac{1}{|x_0 - y|^{n+sp}} - \frac{1}{|x_0 - y^\lambda|^{n+sp}} \right)\dy\nonumber
\\
&\quad
+ \int_{\Sigma_\lambda} \Bigl[J_p(u_\lambda(x_0) - u_\lambda(y)) - J_p (u(x_0) - u_\lambda(y))\nonumber
\\
&\hspace{8em} + J_p(u_\lambda(x_0) - u(y)) - J_p (u(x_0) - u(y))\Bigr]	
\frac{\dy}{|x_0 - y^\lambda|^{n+sp}}\nonumber
\\
&\leq \int_{\Sigma_\lambda} \Bigl[J_p(u_\lambda(x_0) - u_\lambda(y)) - J_p (u(x_0) - u_\lambda(y))\nonumber
\\
&\hspace{8em} + J_p(u_\lambda(x_0) - u(y)) - J_p (u(x_0) - u(y))\Bigr]:=I_2,
\end{align}
where we use the fact $w_\lambda(x_0)\leq w_\lambda(y)$ in $\Sigma_\lambda$ to make the first integral negative.

To compute $I_2$, we use the fundamental theorem of calculus to see that 
\begin{align*}
J_p(u_\lambda(x_0) - u_\lambda(y)) - J_p (u(x_0) - u_\lambda(y)) &= w_\lambda(x_0) (p-1)\int_0^1 |u(x_0) - u_\lambda(y)+t w_\lambda(x_0)|^{p-2}\dt,
\\
J_p(u_\lambda(x_0) - u(y)) - J_p (u(x_0) - u(y))&= w_\lambda(x_0) (p-1)\int_0^1 |u(x_0) - u(y)+t w_\lambda(x_0)|^{p-2}\dt.
\end{align*}
Since $w_\lambda(x_0)<0$ and $|a|^{p-2}+|b|^{p-2}\geq \frac{1}{\max\{1, 2^{p-3}\}}|a-b|^{p-2}$ for $a, b\in\R$ and $p\geq 2$, we obtain
\begin{align*}
I_2\leq  w_\lambda(x_0) \frac{p-1}{\max\{1, 2^{p-3}\}}\int_{\Sigma_\lambda} |w_\lambda(y)|^{p-2}\frac{\dy}{|x_0-y^\lambda|^{n+sp}}.
\end{align*}
Thus, using \eqref{E5.4}, we arrive at
$$ \int_{\Sigma_{\lambda_\circ}} |w_\lambda(y)|^{p-2}\frac{\dy}{|x_0-y^\lambda|^{n+sp}}\leq \int_{\Sigma_\lambda} |w_\lambda(y)|^{p-2}\frac{\dy}{|x_0-y^\lambda|^{n+sp}}\leq \frac{L}{p-1} \max\{1, 2^{p-3}\}.$$
Now letting $\lambda\searrow \lambda_\circ$, we find $x_0\to \tilde{x}\in\partial\Omega$ (up to a subsequence) and apply Fatou's lemma in the above
inequality to see that 
\begin{align*}
\frac{L}{p-1} \max\{1, 2^{p-3}\} &\geq \int_{\Sigma_{\lambda_\circ}} |w_{\lambda_\circ}(y)|^{p-2}\frac{\dy}{|\tilde{x}-y^{\lambda_\circ}|^{n+sp}}
\\
&= \int_{\Sigma_{\lambda_\circ}} |u_{\lambda_\circ}(y)|^{p-2}\frac{\dy}{|\tilde{x}-y^{\lambda_\circ}|^{n+sp}}
\\
&= \int_{\Sigma^c_{\lambda_\circ}} |u(y)|^{p-2}\frac{\dy}{|\tilde{x}-y|^{n+sp}}
\\
&\geq \kappa \int_{\Omega} (\dist(y,\Omega^c))^{s(p-2)}\frac{\dy}{|\tilde{x}-y|^{n+sp}},
\end{align*}
where in the last line we used Lemma~\ref{Hopf}.
Since $\Omega$ has $C^{1,1}$ boundary, it satisfies interior sphere condition, and therefore, for some cone $\cone_{\tilde x}$ with vertex at $\tilde{x}$,
and some $\delta>0$ we would have $(\cone_{\tilde x}\cap B_\delta(\tilde{x}))\setminus\{x\}\subset \Omega$. For $\delta$ sufficiently small, it is also evident that $\dist(y,\Omega^c)\geq \dist(y, \partial\cone_{\tilde{x}})$ for all $y\in \cone_{\tilde x}\cap B_\delta(\tilde{x})$. Hence, we obtain
$$ \int_{\cone_{\tilde x}\cap B_\delta(\tilde{x})} (\dist(y, \partial\cone_{\tilde{x}}))^{s(p-2)}\frac{\dy}{|\tilde{x}-y|^{n+sp}}\leq C,$$
which contradicts Lemma ~\ref{L5.5}. Thus $w_\lambda\geq 0$ for all $\lambda$ sufficiently close to $\lambda_\circ$, proving \eqref{E5.2}.

Define 
$$\lambda^*=\max\{\lambda\in (\lambda_\circ, 0)\; :\; w_\kappa\geq 0\; \quad \text{for all}\; \kappa\in (\lambda_\circ, \lambda]\}.$$
From Step 1, it follows that $\lambda^*>\lambda_\circ$. To complete the method of moving plane we must show that $\lambda^*=0$.

\medskip
\noindent{\bf Step 2.} (Proving $\lambda^*=0$) Suppose, on the contrary, that $\lambda^*<0$. Then, by definition, there exists
a sequence of $\lambda_k\in (\lambda_\circ, 0)$, satisfying $\lambda_k\searrow \lambda^*$, and
$$\inf_{\Sigma_{\lambda_k}} w_{\lambda_k}<0.$$
Let $\cM_k:=\{x\in \Sigma_{k}\; :\; w_k(x)=\min_{\Sigma_k} w_\lambda<0\}$, where $w_k:=w_{\lambda_k}$ and $\Omega_k=\Omega_{\lambda_k}$.
Following Step 1, we find
 $x_k\in \cM_k$ such that for any $r_k<\frac{1}{2}\dist(\cM_k, \partial\Omega_k)$ we have
 \begin{align}\label{E5.5}
 &f(u_{\lambda_k}(x_k))-f(u(x_k))\nonumber
 \\
&\quad \leq\int_{\Sigma_k\setminus B_{r_k}(x_k)} [J_p(u_\lambda(x_k) - u_\lambda(y)) - J_p (u(x_k) - u(y))]
 \left(\frac{1}{|x_k - y|^{n+sp}} - \frac{1}{|x_k - y^{\lambda_k}|^{n+sp}} \right)\dy\nonumber
\\
&\qquad
+ \int_{\Sigma_k} \Bigl[J_p(u_{\lambda_k}(x_k) - u_{\lambda_k}(y)) - J_p (u(x_k) - u_{\lambda_k}(y))\nonumber
\\
&\hspace{12em} + J_p(u_{\lambda_k}(x_k) - u(y)) - J_p (u(x_k) - u(y))\Bigr]	
\frac{\dy}{|x_k - y^{\lambda_k}|^{n+sp}}\nonumber
\\
&:= I_{1, k} + I_{2,k}.
\end{align}
Since $w_k(x_k)\leq \min\{w_k(y), 0\}$ in $\Sigma_k$, we note that $I_{1, k}\leq 0$ and $I_{2,k}\leq 0$.

We let $k\to\infty$, and without any loss of generality we may assume that $x_k\to x^*\in \bar{\Omega}_{\lambda^*}$ (otherwise, we move to a subsequence). Since $w_{\lambda^*}\geq 0$
in $\Sigma_{\lambda^*}$, it follows that $w_{\lambda^*}(x^*)=0$. To analyze the limit, we consider following cases.
\medskip

Let $x^*\in \bar{\Omega}_{\lambda^*}\setminus T_{\lambda^*}$. Letting $k\to\infty$ in \eqref{E5.5}, we see that the left-hand side goes to $0$. Fix $r>0$ small enough so that $\Sigma_{\lambda^*}\setminus B_r(x^*)\subset \Sigma_k\setminus B_{r_k}(x_k)$ for all large $k$. More precisely, if 
$\dist(\cM_k, \partial\Omega_k)\to 0$, we can take any small $r>0$, whereas if $\liminf_{k\to\infty} \dist(\cM_k, \partial\Omega_k)> 0$,
we can set $r_k$ to constant satisfying $r=2 r_k< \frac{1}{2}\dist(\cM_k, \partial\Omega_k)$. Thus, using the fact $u_{\lambda^*}(x^*)=u(x^*)$ together with the Fatou's lemma,
we obtain from \eqref{E5.5} that
$$
\int_{\Sigma_{\lambda^*}\setminus B_{r}(x^*)} [J_p(u(x^*) - u_\lambda(y)) - J_p (u(x^*) - u(y))]
 \left(\frac{1}{|x^* - y|^{n+sp}} - \frac{1}{|x^* - y^{\lambda^*}|^{n+sp}} \right)\geq 0.
$$
Since $|x^* - y|<|x^* - y^{\lambda^*}|$ and $w_{\lambda^*}\gneq 0$  in $\Sigma_{\lambda^*}$, the left-hand side is negative, which leads to a contradiction. Thus $x^*\notin \bar{\Omega}_{\lambda^*}\setminus T_{\lambda^*}$.

\medskip

Next suppose $x^*\in\partial\Omega\cap T_{\lambda^*}$. Note that the 
$${\rm LHS\; of}\;  \; \eqref{E5.5}\geq L w_k(x_k),$$
where $L$ denotes the local Lipschitz constant of $f$, dependent on $\norm{u}_\infty$. Again, as done in Step 1,
$$I_{2, k}\leq w_k(x_k) \frac{p-1}{\max\{1, 2^{p-3}\}}\int_{\Sigma_k} |w_k(y)|^{p-2}\frac{\dy}{|x_k-y^{\lambda_k}|^{n+sp}}.$$
Plugging these in \eqref{E5.5}, letting $k\to\infty$ and applying Fatou's lemma we arrive at
\begin{equation}\label{E5.6}
\int_{\Sigma_{\lambda^*}} |w_{\lambda^*}(y)|^{p-2}\frac{\dy}{|x^*-y^{\lambda^*}|^{n+sp}}
\leq \frac{L}{p-1} \max\{1, 2^{p-3}\},
\end{equation}
using the fact $I_{1,k}\leq 0$. Since $\lambda^*<0$ and $\Omega$ is strictly convex, we can find a small ball $B$ inside 
$(\Omega\setminus\Omega_{\lambda^*})\setminus (\bar\Omega_{\lambda^*})^{\lambda^*}$. Moreover, due to the convexity of the domain,
the cone $\{ t x^* + (1-t)B\; :\; t\in [0, 1]\}$, would lie in $(\Omega\setminus\Omega_{\lambda^*})\setminus (\bar\Omega_{\lambda^*})^{\lambda^*}$ (see Figure ~\ref{fig}).
Hence, for some $\delta>0$ we have 
$$(\cone_{x^*}(\delta))^{\lambda^*}\subset (\Omega\setminus\Omega_{\lambda^*})^{\lambda^*}\setminus \bar\Omega_{\lambda^*}.$$
\begin{figure}[h]
    \centering
    \begin{tikzpicture}

\def\Rx{4.8} 
\def\Ry{3.0} 
\def\xT{-3.0} 
\def\refCX{2*\xT} 

\begin{scope}
\clip (-\Rx-0.5, -\Ry-0.5) rectangle (\xT, \Ry+0.5); 
\clip (0,0) ellipse ({\Rx} and {\Ry}); 
\fill[pattern=north east lines] (-\Rx, -\Ry) rectangle (\xT, \Ry);
\end{scope}

\begin{scope}
\clip (\xT, -\Ry-0.5) rectangle (\Rx, \Ry+0.5); 
\clip (\refCX, 0) ellipse ({\Rx} and {\Ry}); 
\fill[pattern=dots] (\xT, -\Ry) rectangle (\Rx, \Ry);
\end{scope}

\pgfmathsetmacro{\yIntersect}{\Ry * sqrt(1 - (\xT*\xT)/(\Rx*\Rx))}
\coordinate (Apex) at (\xT, \yIntersect);

\begin{scope}[shift={(Apex)}]
    \fill[black,opacity=0.3] (0,0) -- (3:2.2) arc (3:-18:2.2) -- cycle;

    \draw[thick] (0,0) -- (3:2.2);
    \draw[thick] (0,0) -- (-18:2.2);
    \draw[thick] (3:2.2) arc (3:-18:2.2);

    \draw[thick, loosely dashed] (0,0) -- (-7:3.5) coordinate (P);
\end{scope}

\begin{scope}[shift={(Apex)}, xscale=-1]
  
    \fill[black,opacity=0.15] (0,0) -- (3:2.2) arc (3:-18:2.2) -- cycle;

    \draw[thick, dashed] (0,0) -- (3:2.2);
    \draw[thick, dashed] (0,0) -- (-18:2.2);
    \draw[thick, dashed] (3:2.2) arc (3:-18:2.2);

\end{scope}
\node[right] at (\xT - 3, \yIntersect -.9) { $(\mathcal{C}_{x^*} \! (\delta)\!)^{\lambda^*}$};


\begin{scope}
\def\clipRight{1.0} 
\clip (-\Rx-0.5, -\Ry-0.5) rectangle (\clipRight, \Ry+0.1);
\draw[thick] (0,0) ellipse ({\Rx} and {\Ry});
\end{scope}

\begin{scope}
\clip (\xT, -\Ry-0.5) rectangle (\Rx, \Ry+0.5); 
\draw[thick, dashed] (\refCX, 0) ellipse ({\Rx} and {\Ry});
\end{scope}

\draw[] (\xT, -\Ry-0.2) -- (\xT, \Ry+0.2);


\node at (-3.5, 0.6) {\Large $\Omega_{\lambda^*}$};

\node[below right] at (\xT, -\Ry) { $T_{\lambda^*}$};

\node[right] at (\xT + 1.6, \yIntersect -.9) { $\mathcal{C}_{x^*} \! (\delta)$};

\draw[thin] (0, -\Ry-.5) -- (0, \Ry+.5); 
\node[below right] at (0, -\Ry-0.3) { $T_{0}$};

\end{tikzpicture} 
    \caption{Visualization of the cone}
    \label{fig}
\end{figure}
Note that $u=0$ in $(\cone_{x^*}(\delta))^{\lambda^*}$. So we can apply Lemmas~\ref{Hopf} and ~\ref{L5.5}, as done in Step 1, to see that the
left-hand side of \eqref{E5.6} is infinite, leading to a contradiction.

\medskip

Thus we are left with the option $x^*\in \Omega\cap T_{\lambda^*}$. Let $\delta_k = \dist (x_k, \partial \Sigma_k)$. It is obvious that $\delta_k\to 0$
and $\dist(\cM_k, \partial\Omega_k)\to 0$, as $k\to\infty$. From \eqref{E5.5} we then have, for $r>0$,
\begin{align}\label{E5.7}
L w_k(x_k)\leq I_{1, k}
&\leq\int_{\Sigma_k\setminus B_{r}(x^*)} [J_p(u_{\lambda_k}(x_k) - u_{\lambda_k}(y)) - J_p (u(x_k) - u(y))]
 \left(\frac{1}{|x_k - y|^{n+sp}} - \frac{1}{|x_k - y^{\lambda_k}|^{n+sp}} \right)\nonumber
 \\
 &=J_{1,k}
\end{align}
for all $k$ large, depending on $r$. Denote by $x_k=(x_{k1}, \ldots, x_{kn})$ and $y_k=(\lambda_k, x_{k2}, \ldots, x_{kn})$. 
Note that $\delta_k=|x_k-y_k|=\lambda_k-x_{k1}$.
Since $x^*\in \Omega$ and $w\in C^{1}(\Omega)$, it follows that
$$
\frac{w_k(x_k)}{\delta_k}=\frac{w_k(x_k)-w_k(y_k)}{\delta_k}=\grad w_k(\xi_k)\cdot \tilde{e}_k=(\grad w_k(\xi_k)-\grad w_k(x_k))\cdot \tilde{e}_k\to 0
$$
as $k\to\infty$, where  $\xi_k$ is a point on the line joining $x_k$ and $y_k$, and
$\tilde{e}_k$ is the unit vector along $x_k-y_k$. To derive a contradiction from \eqref{E5.7} we show that, for $r$ sufficiently small,
\begin{equation}\label{E5.8}
\limsup_{k\to\infty}\frac{1}{\delta_k}J_{1, k}<0.
\end{equation}
Estimate \eqref{E5.8} actually follows from \cite[Theorem~2.3]{CL18}. We add a proof here for the convenience of reading. Applying
mean-value theorem on $g(t)=t^{-\frac{n+sp}{2}}$ we note that
\begin{align*}
 \frac{1}{\delta_k}\left(\frac{1}{|x_k - y|^{n+sp}} - \frac{1}{|x_k - y^{\lambda_k}|^{n+sp}}\right)
 &= 2(n+sp) \frac{(\lambda_k-y_1)}{|\xi(k, y)|^{n+sp+2}},
\end{align*}
where $|x_k-y|\leq \xi(k, y)\leq |x_k-y^{\lambda_k}|$. Since $x^*\in T_0$, we have $\xi(k, y)\to |x^*-y|$ as $k\to\infty$.
Therefore,
\begin{align*}
\lim_{k\to\infty}\frac{1}{\delta_k}J_{1, k}
=2(n+sp)\int_{\Sigma_{\lambda^*}\setminus B_{r}(x^*)} [J_p(u(x^*) - u_{\lambda^*}(y)) - J_p (u(x^*) - u(y))]
\frac{(\lambda^*-y_1)}{|x^*-y|^{n+sp+2}}\dy,
\end{align*}
using the fact $u_{\lambda^*}(x^*)=u(x^*)$. Since $w_{\lambda^*}\gneq 0$ in $\Sigma_{\lambda^*}$, we can choose $r$ small enough so that the above limit becomes negative. This proves \eqref{E5.8}, which contradicts \eqref{E5.7}. Thus, $x^*\notin \Omega\cap T_0$.

Hence we have covered all possibilities of $x^*$ and the contradiction in each case suggests that $\lambda^*$ must be equal to $0$.

\begin{rem}\label{R5.1}
Step 2 above should be compared with Step 2 in the proof of \cite[Theorem~3.1]{CL18}. Observe that the sequence $x_k$ considered in \cite{CL18} converges to a point
$x_0\in \partial\Sigma_{\lambda^*}$,
which may lie in $\partial\Sigma_{\lambda^*}\cap \partial\Omega$. In such a situation, the condition
$$
\nabla w_{\lambda^*}(x_0)=0
$$
(see equation (39) in \cite{CL18}) may fail unless one assumes
$u\in C^1(\overline{\Omega})$,
which itself may be incompatible with the Hopf's lemma. On the other hand, the proof of \cite[Theorem~3.1]{CL18} remains valid when
$x_0\in \partial\Sigma_{\lambda^*}\cap \Omega$,
which is one of the situations considered in Step 2 above.
\end{rem}

Now we can complete the proof of Theorem~\ref{T5.1}.
\begin{proof}[Proof of Theorem~\ref{T5.1}]
In view of Step 2 above, we get $u_\lambda-u\geq 0$ for all $\lambda\in (\lambda_\circ, 0]$. Again, since $u_0$ also satisfies \eqref{main-moving},
we obtain $u_0(x)=u(-x_1, x_2, \ldots, x_n)=u(x_1, x_2, \ldots, x_n)=u(x)$. Moreover, we also see get that $u$ is non-decreasing in  the $x_1$ direction. To investigate the strict monotonicity, first we suppose $sp>p-2$.
Consider $x_0\in\Omega_0\setminus \mathcal{Z}_u$. Since $\mathcal{Z}_u$ is closed relative to $\Omega_0$, we can find a 
ball $B_\delta(x_0)\Subset \Omega_0\setminus \mathcal{Z}_u$. Let $x_1, x_2\in B_\delta(x_0), x_1<x_2$
 be such that the line passing through $x_1, x_2$
contains $x_0$, and $x_2-x_1$ is parallel to the $x_1$-axis. Define $\lambda=\frac{1}{2}(x_1+x_2)<0$. From Step 2 we have
$u_\lambda(x)-u(x)\gneqq 0$ in $\Sigma_\lambda$. Applying Theorem~\ref{T4.4} we see that $u_\lambda-u>0$ in 
$\Omega_\lambda\cap  B_\delta(x_0)$. In particular, $u(x_2)=u_\lambda(x_1)>u(x_1)$. In other words, $u$ is strictly increasing on the line passing 
through $x_0$ belonging to $B_\delta(x_0)$, and parallel to the $x_1$-axis. Hence (ii) follows.

(i) follows from Theorem~\ref{T4.4} and the argument above. This completes the proof.
\end{proof}


\appendix

\section{Proof of Theorem~\ref{T5.3}}\label{Appen}
By exploiting the scaling properties of the equation and \cite[Theorem~1.2]{BS25}, it suffices to consider the case where $\Omega=B_2$, and $u, f$ are Lipschitz in $B_2$.

The proof essentially follows the arguments in \cite{GJS25}, where the authors deal with fractional $p$-harmonic functions. The same reasoning can be extended to the Poisson equation with Lipschitz data. Consider a local weak solution to
\begin{equation}\label{EA1}
-(\Delta_p)^s u=f(x)\quad \text{in}\;\; B_2.
\end{equation}
For any unit vector $e\in\mathbb{S}^n$, we denote $g_e(x)=e\cdot \nabla f$. We also define the operator $\cL_u v(x)$ by
$$
\cL_u v(x) = \text{pv}\int_{\Rn} (v(x)-v(y)) K_u(x, y)\,\dy,
$$
where the kernel is given by
$$
K_u(x, y)=(p-1) \frac{|u(x)-u(y)|^{p-2}}{|x-y|^{n+sp}}.
$$
Note that this kernel is symmetric, that is, $K_u(x, y)=K_u(y, x)$, but may degenerate. Furthermore, by \cite[Lemma~3.2]{GJS25}, if $v \in W^{2, \alpha}(B_1)$ for $\alpha=1-\frac{p(1-s)}{2}$, then $\cL_u v \in W^{2, -\alpha}(B_1)$. In particular, $\cL_u v$ is well-defined in the weak sense provided that $v$ is Lipschitz.

Informally, one can see that $\cL_u (e\cdot\nabla u)=g_e$. The H\"older regularity of $\nabla u$ is then obtained by analyzing this linearized model; see \cite{GJS25}. Since $u$ is not globally Lipschitz, we must introduce a cut-off function to rigorously define the linearized equation. Let $\eta:\Rn\to[0, 1]$ be a radially symmetric cut-off function satisfying $\eta(x)=1$ for $|x|\leq 3/2$ and $\eta(x)=0$ for $|x|\geq 7/4$. For $R>1$, we define
$$
v_e(x)=\eta(x/R) (e\cdot \nabla u).
$$
If $u$ is sufficiently smooth and satisfies \eqref{EA1} in $B_{2R}$, and if $f$ is Lipschitz in $B_{2R}$, then the computations in \cite[Lemma~3.3]{GJS25} reveal that
\begin{equation}\label{EA2}
|\cL_u v_e|\leq C \left(R^{-1-sp} \|u\|^{p-1}_{L^\infty(B_R)}+ [\tail_{sp+1,p-1}(0, R;u)]^{p-1} + \|f\|_{\text{Lip}(B_{2R})} \right)\quad \text{for}\; x\in B_R,
\end{equation}
where $C$ depends only on $s, p$, and $n$. Rather than directly differentiating \eqref{EA1}, we can employ difference quotients and follow the calculations in \cite[Lemma~3.3]{GJS25} to derive \eqref{EA2}. This estimate can be rigorously justified in the weak sense via the regularization method presented in Section 9 of \cite{GJS25} without any modifications.

Let $K_0\in\mathbb{N}$, and define $u_1(x) = 2^{K_0} u (2^{-K_0}x)$. From \eqref{EA1}, it follows that
$$
-(\Delta_p)^s u_1=f_1(x)\quad \text{in}\;\; B_{2^{K_0+1}},
$$
where
\[
f_1(x)= 2^{K_0((p-1)-sp)} f(2^{-K_0}x).
\]
Since $sp>p-2$, choosing a sufficiently large $K_0$ allows us to make $\|f_1\|_{\text{Lip}(B_{2^{K_0+1}})}$ arbitrarily small. This is the key observation required to apply the results of \cite{GJS25}. Keeping this scaling in mind, the proof of \cite[Lemma~4.1]{GJS25} yields the following: given small constants $\mu, r>0$, there exist small constants $\varepsilon_1, \delta>0$ and a large constant $K_0>0$ (all depending on $r, \mu, s, p, n$) such that for any $e\in \mathbb{S}^n$, if
\begin{itemize}
\item[(i)] $u$ satisfies \eqref{EA1} in $B_{2^{K_0+1}}$,
\item[(ii)] $u(0)=0$,
\item[(iii)] $\|\nabla u\|_{L^\infty(B_{2^k})}\leq (1-\delta)^{-k}$ for $k=0, \ldots, K_0$,
\item[(iv)] $\tail_{p-1,sp+1}(0, 2^{K_0}; u) + \|f_1\|_{\text{Lip}(B_{2^{K_0}})}\leq \varepsilon_1$,
\item[(v)] $|\{x\in B_1 : |\nabla u-e|\geq r\}|\geq \mu$,
\end{itemize}
then $e\cdot \nabla u\leq (1-\delta)$ in $B_{\frac{1}{2}}$. 

Since $\delta$ can be chosen small enough to satisfy $(1-\delta)^{-(p-1)}2^{p(1-s)-2}<1$, this conclusion along with a standard scaling argument yields \cite[Lemma~5.1]{GJS25}.
Again, by scaling argument, the complementary case to the one above is given by
$$
|\{x\in B_1 : |\nabla u-e|\leq r\}|\geq (1-\mu).
$$
This is the core content of \cite[Section 6]{GJS25}. Because that section relies on the Ishii-Lions method and most of the computations are analogous to those in \cite{BS25,BT25}, the results continue to hold in this framework. Consequently, the H\"older regularity results from \cite[Section 7]{GJS25} remain valid under the conditions of \cite[Lemma~7.1]{GJS25}. Finally, by combining these results as in \cite[Section 8]{GJS25}, we establish the H\"{o}lder regularity of $\nabla u$ in the ball $B_1$.

To prove the second part of Theorem~\ref{T5.3}, it remains only to show that $u$ is locally Lipschitz. If $\frac{sp}{p-1}>1$, this follows directly from \cite[Theorem~1.1]{BT25}. On the other hand, if $\frac{sp}{p-2}>1$, the result follows by applying \cite[Theorem~1.2]{BS25} along with a standard bootstrapping argument.

\bigskip
\subsection*{Acknowledgement}
A. Biswas was partially supported by 
an ANRF-ARG grant ANRF/ARG/ 2025/000019/MS .

\subsection*{Data Availability}  All data generated or analyzed during this study are included in this published article.
\medskip

\noindent{\bf Declarations}
\subsection*{Conflicts of Interest}  The authors declare that they have no conflict of interest to this work.

\end{document}